\renewcommand{\thesection}{\arabic{section}}
\newtheorem{theorem}{Theorem}[section]
\newtheorem{lemma}[theorem]{Lemma}
\newtheorem{prop}[theorem]{Proposition}
\newtheorem{defi}[theorem]{Definition}
\newtheorem{remark}[theorem]{Remark}
\newtheorem{corollary}[theorem]{Corollary}
\newcommand{\prf}{{\noindent{\it Proof.\quad }}}
\renewcommand{\theequation}{\thesection .\arabic{equation}}
\let\subs\subsection
\renewcommand\subsection{\setcounter{equation}{0}
\gdef\theequation{\thesubsection \arabic{equation}}\subs}
\let\sect\section
\renewcommand\section{\setcounter{equation}{0}
\gdef\theequation{\thesection .\arabic{equation}}\sect}
\newcommand{\gange}{\cdot}
\newcommand{\dps}{\displaystyle}
\newcommand{\e}{\mathrm{e}} 
\newcommand{\I}{\mathrm{i}} 
\renewcommand{\bar}[1]{\ensuremath{\overline{#1}}}
\newcommand{\supp}{\ensuremath{\textrm{supp}}}
\title{Cyclic $p$-roots of prime length $p$ and related\\
complex Hadamard matrices}
\author{\textsc{Uffe Haagerup}}
\date{}
\begin{document}
\maketitle

\begin{abstract}
In this paper it is proved, that for every prime number $p$, the set of cyclic
$p$-roots in $\mathbb{C}^p$ is finite. Moreover the number of
cyclic $p$-roots counted with multiplicity is equal to
$\left(\begin{smallmatrix}2p-2\\ p-1\end{smallmatrix}\right)$.
In particular, the number of complex circulant Hadamard matrices
of size $p$, with diagonal entries equal to 1, is less than or equal
to $\left(\begin{smallmatrix}2p-2\\ p-1\end{smallmatrix}\right)$.
\end{abstract}

\section{Introduction}
In $\cite{bj}$, G\"oran Bj\"orck introduced the cyclic $n$-roots for
every $n\in\mathbb{N}\ (n\ge 2)$ as the solutions 
$z=(z_0,\ldots,z_{n-1})\in\mathbb{C}^n$ to the following $n$
polynomial equations:
\begin{equation}\label{e1.1}
\begin{aligned}
z_0+z_1+\ldots+z_{n-1} &= 0\\
z_0z_1+z_1z_2+\ldots+z_{n-1}z_0 &= 0\\
& \vdots\\
z_0z_1\gange\ldots\gange 
z_{n-2}+\ldots+z_{n-1}z_0\gange\ldots\gange z_{n-3} &= 0\\
z_0z_1\gange\ldots\gange z_{n-1} &= 1
\end{aligned}
\end{equation}
This system of equations is invariant under cyclic permutation 
of the indices $(0,1,\ldots,n-1)$.\\ The motivation for studying
the system of equations (\ref{e1.1}) was to study
bi-unimodular sequences of length $n$, i.e. elements
$(x_0,x_1,\ldots,x_{n-1})$ in $\mathbb{C}^n$ for which
\[
|x_j|=1\quad\textrm{and}\quad |\hat{x}_j|=
1\quad\textrm{for}\quad 0\le j\le n-1
\]
where $\hat{x}=(\hat{x}_0,\hat{x}_1,\ldots,\hat{x}_{n-1})$ is
the Fourier Transformed of $x$ w.r.t. the group 
$\mathbb{Z}_n=\mathbb{Z}/n\mathbb{Z}$, i.e.
\begin{equation}\label{1.2}
\hat{x}_j = \frac{1}{\sqrt{n}}\sum_{k=0}^{n-1}\e^{\I 2\pi jk/n}x_k,
\quad 0\le j\le n-1.
\end{equation}
If $x=(x_0,\ldots,x_{n-1})\in\mathbb{C}^n$ and $|x_j|=1$,
$1\le j\le n$, then by $\cite{bj}$, $x$ is a biunimodular sequence
if and only if
\[
(z_0,\ldots,z_{n-1})=\Big(\frac{x_1}{x_0},\frac{x_2}{x_1},\ldots,
\frac{x_{n-1}}{x_{n-2}},\frac{x_0}{x_{n-1}}\Big)
\]
is a cyclic $n$-root, and this gives a one-to-one correspondence
between bimodular sequences $(x_0,x_1,\ldots,x_{n-1})$ with $x_0=1$
and cyclic $n$-roots of modulus $1$.

A complex Hadamard matrix of size $n$ is a matrix
\[
H=(h_{jk})_{j,k=0,\ldots,n-1}
\]
for which all entries are complex numbers with modulus $1$, and
\[
H^*H = nI.
\]
Moreover $H$ is called circulant, if the entries $h_{jk}$ only
depend on $j-k$ (calculated modulo $n$). By \cite{bs} a 
$n\times n$ matrix $H$ is a complex circulant Hadamard matrix
if and only if
\[
h_{jk}=x_{j-k},\quad j,k\in\{0,\ldots,n-1\}
\]
for a biunimodular sequence $x=(x_0,\ldots,x_{n-1})$ (again,
indices must be calculated modulo $n$). Hence there is also a
one-to-one correspondence between complex cyclic $n$ roots and
circulant Hadamard
matrices of size $n$ with diagonal entries equal to 1.

It is elementary to solve the cyclic $n$-root problem
(\ref{e1.1}) for $n=2,3$ and $4$. In 1991-92 Bj\"orck and
Fr\"oberg found all cyclic $n$-roots for $5\le n\le 8$ by computer
algebra methods (cf. \cite{bf1} and \cite{bf2}), for the case $n=7$ see also \cite{baf}. Moreover in 2001 Faug\`ere found
all cyclic $9$-roots by developing more advanced software for
computer algebra (cf. \cite{fa}). For $2\le n\le 9$, the total
number $\gamma(n)$ of cyclic $n$-roots and the number
$\gamma_u(n)$ of cyclic $n$-roots of modulus 1 are given by the table:
\begin{center}
\begin{tabular}{|c||c|c|c|c|c|c|c|c|}
\hline
$n$ & 2 & 3 & 4 & 5 & 6 & 7 & 8 & 9\\
\hline
$\gamma(n)$ & 2 & 6 & $\infty$ & 70 & 156 & 924 & $\infty$ & $\infty$\\
\hline
$\gamma_u(n)$ & 2 & 6 & $\infty$ & 20 & 48 & 532 & $\infty$ & $\infty$\\
\hline
\end{tabular}
\end{center}
For further results on cyclic $n$-roots and circulant Hadamard
matrices, see also \cite{ha}.

Based on the values of $\gamma(n)$ for $n=2,3,5$ and 7. Ralf Fr\"oberg
conjectured that
$\gamma(p)=\left(\begin{smallmatrix}2p-2\\ p-1\end{smallmatrix}\right)$
for all prime numbers $p$. In this paper we will prove, that for
every prime number $p$, the number of cyclic $p$-roots counted with
multiplicity is equal to 
$\left(\begin{smallmatrix}2p-2\\ p-1\end{smallmatrix}\right)$.
For $p=2,3,5$ and 7 all the cyclic $p$-roots have multiplicity $1$,
but we do not know, whether this holds for all primes. In the
non-prime case $n=9$, Faug\`ere found isolated cyclic $9$-roots with
multiplicity $4$ (cf. \cite{fa}).

Let us next outline the main steps in our proof. In section 2 we
prove that there is a one-to-one correspondence between solutions
to (\ref{e1.1}) and solutions to the following
system of $2n-2$ equations in $2n-2$ variables
$(x_1,\ldots,x_{n-1},y_1,\ldots,y_{n-1})$,
\begin{equation}\label{1.3}
\left.\begin{array}{ll}
x_jy_j=1, & 1\le j\le n-1\\
\hat{x}_j\hat{y}_{-j} = 1, & 1\le j\le n-1
\end{array}\right.
\end{equation}
where $x=(1,x_1,\ldots,x_{n-1})$, $y=(1,y_1,\ldots,y_{n-1})$ and
$\hat{x},\hat{y}$ are the Fourier transformed vectors of $x$ and
$y$ as defined by (\ref{1.2}).

In section 3, we prove that for every prime number $p$, the set of
solutions to (\ref{1.3}) with $n=p$ is a finite set. The proof is based on a
Theorem of  Chebotar\"ev from 1926, which asserts, that when $p$ is
a prime number, then all square sub-matrices of the matrix
\[
(\e^{\I 2\pi jk/p})_{j,k=0,\ldots,p-1}
\]
are non-singular. Having only finitely many solutions to (\ref{1.3}) 
the same holds for (\ref{e1.1}), but in order to count the
number of solutions in (\ref{1.3}) and (\ref{e1.1}), we have
in section 4 collected a number of (mostly) well known results on
multiplicity of proper holomorphic functions
$\varphi\!: U\rightarrow V$, where $U,V$ are regions in 
$\mathbb{C}^n$, and on multiplicity of the isolated zeros of such a
function. The main result needed is that for all $w\in V$, the
number of solutions to $\varphi(z)=w$ (i.e. the number of zeros of
$\varphi_w\!: z\rightarrow \varphi(z)-w$) counted with multiplicity
is equal to the multiplicity of $\varphi$, and it is therefore
independent of $w\in V$ (cf. Theorem \ref{t4.8}). Using this we
can count the number of solutions to (\ref{1.3}) with multiplicity,
by counting instead the solutions
$(x_1,\ldots,x_{p-1},y_1,\ldots,y_{p-1})\in\mathbb{C}^{2p-2}$
to
\begin{equation}\label{1.4}
\left.\begin{array}{ll}
x_jy_j=0, & 1\le j\le p-1\\
\hat{x}_j\hat{y}_{-j}=0, & 1\le j\le p-1
\end{array}\right.
\end{equation}
where $x=(1,x_{1},\ldots,x_{p-1})$ and $y=(1,y_1,\ldots,y_{p-1})$
as in (\ref{1.3}). The latter problem can be solved by linear
algebra (cf. section 5) and it has exactly
$\left(\begin{smallmatrix}2p-2\\ p-1\end{smallmatrix}\right)$
solutions all with multiplicity $1$. Hence (\ref{1.3}) has
$\left(\begin{smallmatrix}2p-2\\ p-1\end{smallmatrix}\right)$
solutions counted with multiplicity.

It is clear from section 2, that (\ref{e1.1}) and 
(\ref{1.3}) has the same number of distinct solutions. In section 6,
we prove that the same also holds when solutions are counted
according to their multiplicities. This is not obvious, because,
when passing from (\ref{1.3}) to (\ref{e1.1}) the number
of variables is changed twice in the process, first from $2p-2$
to $p-1$ and next from $p-1$ to $p$.

In section 7, we use the methods from the previous sections to count the number of cyclic $p$-roots of simple index $k$, where $k\in\mathbb{N}$ divides $p-1$. Following \cite{bj} and \cite{bh} a cyclic $p$-root has simple index $k$, if the corresponding cyclic $p$-root on $x$-level is constant on the cosets of the unique index $k$ subgroup of $(\mathbb{Z}_p^*,\cdot)$. The cyclic $p$-roots of simple index $k$ can be determined by solving the following set of equations in $k$ variables $c_0,c_1,\ldots,c_{k-1}\in\mathbb{C}^*$:
\begin{equation}\label{1.5}
c_a + \frac{1}{c_{a+m}}+\sum_{i,j=0}^{k-1}n_{ij}\frac{c_{j+a}}{c_{i+a}}=0\quad (0\le a\le k-1)
\end{equation}
where $m$ and $n_{ij}$ are certain integers depending on $p$ and $k$ (cf. \cite{bj} and section 7 of this paper for more details). For $k=1,2,3$ all cyclic $p$-roots of simple index $k$ has been explicitly computed in \cite{bj} and \cite{bh}. The number of distinct cyclic $p$-roots of simple index $k$ is 2 (resp. 6, 20) for $k=1$ (resp. 2, 3) for all primes for which $k$ divides $p-1$. We prove in Theorem 7.1 that the number of solutions to (\ref{1.5}) counted with multiplicity is equal to $\left(\begin{smallmatrix}2k\\ k\end{smallmatrix}\right)$ for all $k\in\mathbb{N}$ and all primes for which $k$ divides $p-1$.

\subsection*{Acknowledgement}
I wish to thank G\"oran Bj\"orck for many fruitful discussions
on cyclic $n$-roots, since we first met in 1992, and for constantly
encouraging me to write up a detailed proof of the main result of this
paper after a very preliminary version of the proof was communicated to
him in the summer of 1996. I also wish to thank Bahman Saffari for
his interest in this result and for giving me the opportunity to 
present it at the workshop on Harmonic Analysis and Number Theory
at CIRM/Luminy, October 2005.

\section{Reformulations of the cyclic $n$-root problem}\label{reformation}
Recall that the cyclic $n$-roots are the solutions
$z=(z_0,z_1,\ldots,z_{n-1})\in\mathbb{C}^n$ to the system of equations:
\begin{equation}\label{2.1}
\begin{aligned}
z_0+z_1+\ldots+z_{n+1} &= 0\\
z_0z_1+z_1z_2+\ldots+z_{n-1}z_0 &= 0\\
& \vdots\\
z_0z_1\gange\ldots\gange z_{n-2}+\ldots+
z_{n-1}z_0\gange\ldots\gange z_{n-3} &= 0\\
z_0z_1\gange\ldots\gange z_{n-1} &= 1
\end{aligned}
\end{equation}
Note that by the last equation $z_i\in\mathbb{C}^* =
\mathbb{C}\setminus \{0\}$ for every cyclic $n$-root
$z=(z_0,\ldots,z_{n-1})$. Let $z\in(\mathbb{C}^*)^n$ be a cyclic
$n$-root, and define $x=(x_0,\ldots,x_{n-1})\in(\mathbb{C}^*)^n$ by
\begin{equation}\label{2.2}
x_0 = 1,\ x_1 = z_0,\ x_2 = z_0z_1,\ \ldots,\ x_{n-1} =
z_0z_1\cdot 
\ldots\cdot z_{n-2}
\end{equation}
Then clearly
\[
\frac{x_{j+1}}{x_j}=z_j, \quad j=0,1,\ldots,n-2
\]
and by the last equation in (\ref{2.1}) the same formula also 
holds for $j=n-1$. Moreover, by the first $n-1$ equations in
(\ref{2.1}), $x=(x_0,\ldots,x_{n-1})$ is a solution to
\begin{equation}\label{2.3}
\begin{aligned}
x_0 &= 1\\
\frac{x_1}{x_0}+\frac{x_2}{x_1}+\ldots+\frac{x_0}{x_{n-1}} &= 0\\
\frac{x_2}{x_0}+\frac{x_3}{x_1}+\ldots+\frac{x_1}{x_{n-1}} &= 0\\
&\vdots\\
\frac{x_{n-1}}{x_0}+\frac{x_0}{x_1}+\ldots+\frac{x_{n-2}}{x_{n-1}} &= 0
\end{aligned}
\end{equation}
Conversely if $x=(x_0,\ldots,x_{n-1})\in(\mathbb{C}^*)^n$ is a 
solution to (\ref{2.3}), then
\[
(z_0,z_1,\ldots,z_{n-1})=
\Big(\frac{x_1}{x_0},\frac{x_2}{x_1},\ldots,\frac{x_0}{x_{n-1}}\Big)
\]
is a solution to (\ref{2.1}). \emph{We will call the 
solutions to (\ref{2.3}) cyclic $n$-roots on $x$-level}.\\
\\
Instead of imposing the condition $x_0=1$, it would be equivalent
to look for solutions to the last $n-1$ equations of
(\ref{2.3}) in the subset $(\mathbb{C}^*)^n/\!\!\sim$ of
the complex projective space
$P_{n-1}=(\mathbb{C}^n\setminus\{0\})/\!\!\sim$, where
$x,x'\in\mathbb{C}^n\setminus\{0\}$ are equivalent
$(x\sim x')$ iff $x'=cx$ for some $c\in\mathbb{C}^*$.\\
\\
Suppose $x=(x_0,\ldots,x_{n-1})\in(\mathbb{C}^*)^n$ is a solution
to (\ref{2.3}), and put $y_j=\frac{1}{x_j}$, $j=0,\ldots,{n-1}$.
Then
\[
(x,y)=(x_0,\ldots,x_{n-1},y_0,\ldots,
y_{n-1})\in\mathbb{C}^n\times\mathbb{C}^n
\]
is a solution to
\begin{equation}\label{2.4}
\begin{aligned}
x_0=y_0 &= 1\\
x_ky_k &= 1\textrm{ for } 1\le k\le n-1\\
\sum_{m=0}^{n-1}x_{k+m}y_m &= 0\textrm{ for } 1\le k\le n-1
\end{aligned}
\end{equation}
where again all indices are counted modulo $n$. Conversely if 
$(x,y)\in\mathbb{C}^n\times\mathbb{C}^n$ is a solution to
(\ref{2.4}), then $x\in(\mathbb{C}^*)^n$ and $x$ is a solution to
(\ref{2.3}), because $x_ny_n=1$ for $0\le k\le n-1$.
\emph{We will call the solutions
$(x_0,\ldots,x_{n-1},y_0,\ldots,
y_{n-1})\in\mathbb{C}^n\times\mathbb{C}^n$ to (\ref{2.4}) cyclic
$n$-roots on $(x,y)$-level.}\\
\\
Instead of imposing the conditions $x_0=y_0=1$, it would be
equivalent to look for solutions to
\begin{equation}\label{2.5}
\begin{aligned}
x_ky_k &= x_0y_0, \quad 1\le k\le n-1\\
\sum_{m=0}^{n-1}x_{k+m}y_m &= 0,\quad 1\le k\le n-1
\end{aligned}
\end{equation}
in the subset $(\mathbb{C}^*)^n/\!\!\sim \times\;
(\mathbb{C}^*)^n/\!\!\sim$ of $P_{n-1}\times P_{n-1}$.

\begin{lemma}\label{l2.1} Let $n,v\in\mathbb{C}^n$ and
let $\hat{u},\hat{v}\in\mathbb{C}^n$ be the transformed vectors, i.e.
\[
\hat{u}=Fu,\quad \hat{v}=Fv
\]
where $F$ is the unitary matrix
\[
F=\frac{1}{\sqrt{n}}\Big(\e^{\I 2\pi jk/n}\Big)_{j,k=0,\ldots,n-1}
\]
Still calculating indices cyclic modulo $n$, we have
\begin{alignat}{2}
\label{2.6}\hat{u}_j\hat{v}_{-j} & =
\frac{1}{n}\sum_{k=0}^{n-1}\e^{\I 2\pi jk/n}
\bigg(\sum_{m=0}^{n-1}u_{k+m}v_m\bigg), && \quad 0\le j\le n-1\\
\label{2.7}\sum_{j=0}^{n-1}\e^{-\I 2\pi kj/n} \hat{u}_j\hat{v}_j & =
\sum_{m=0}^{n-1} u_{k+m}v_m, && \quad 0\le k\le n-1
\intertext{In particular}
\label{2.8}\sum_{j=0}^{n-1}\hat{u}_j\hat{v}_{-j} &=
\sum_{m=0}^{n-1}u_mv_m
\end{alignat}
\end{lemma}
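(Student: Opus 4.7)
The plan is to establish the three formulas in sequence: (\ref{2.6}) by direct expansion of the definitions of $\hat{u}_j$ and $\hat{v}_{-j}$, (\ref{2.7}) as the Fourier inversion of (\ref{2.6}), and (\ref{2.8}) as the specialization of (\ref{2.7}) at $k=0$. The lemma is essentially the discrete convolution theorem for the cyclic group $\mathbb{Z}/n\mathbb{Z}$, packaged in a form convenient for the later sections.

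For (\ref{2.6}), the first step is to plug in
\[
\hat{u}_j=\frac{1}{\sqrt{n}}\sum_{a=0}^{n-1}\e^{\I 2\pi ja/n}u_a,\qquad
\hat{v}_{-j}=\frac{1}{\sqrt{n}}\sum_{b=0}^{n-1}\e^{-\I 2\pi jb/n}v_b,
\]
multiply, and combine the exponentials to obtain
\[
\hat{u}_j\hat{v}_{-j}=\frac{1}{n}\sum_{a,b=0}^{n-1}\e^{\I 2\pi j(a-b)/n}\,u_a v_b.
\]
I would then reindex by $k\equiv a-b\pmod{n}$ and $m=b$; since for each fixed $m$ the map $k\mapsto k+m\bmod n$ is a bijection of $\mathbb{Z}/n\mathbb{Z}$, the double sum over $(a,b)$ becomes a double sum over $(k,m)$ with $u_a v_b=u_{k+m}v_m$. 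Collecting the sum over $m$ inside the sum over $k$ gives exactly (\ref{2.6}).

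For (\ref{2.7}), I would read (\ref{2.6}) as $\sqrt{n}\,\hat{u}_j\hat{v}_{-j}=(Fw)_j$, where $w_k:=\sum_{m=0}^{n-1}u_{k+m}v_m$ and $F$ is the unitary Fourier matrix of the lemma, and then apply the inverse $F^{-1}=F^*$, whose $(k,j)$-entry is $\tfrac{1}{\sqrt{n}}\e^{-\I 2\pi kj/n}$, to both sides. This recovers $w_k$ as the right-hand side of (\ref{2.7}). Finally, (\ref{2.8}) falls out of (\ref{2.7}) by taking $k=0$: the exponential becomes $1$, and $w_0=\sum_m u_m v_m$. There is no genuine obstacle; the only point that demands care is the bookkeeping for the cyclic index shift in the substitution $k\equiv a-b\pmod{n}$, so that the double sum is truly reindexed bijectively over $\{0,\dots,n-1\}^2$.
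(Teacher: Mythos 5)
Your proposal is correct and follows essentially the same route as the paper: direct expansion and reindexing $(a,b)\mapsto(k+m,m)$ for (\ref{2.6}), inversion via $F^{-1}=F^{*}=\overline{F}$ for (\ref{2.7}), and the specialization $k=0$ for (\ref{2.8}). (You also silently correct the typo $\hat{v}_j$ for $\hat{v}_{-j}$ in the stated form of (\ref{2.7}), which is the intended reading.)
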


\begin{proof} Let $0\le j\le n-1$. Then
\[
\hat{u}_j\hat{v}_{-j} =
\frac{1}{n}\sum_{l,m=0}^{n-1}\e^{\I 2\pi j(l-m)/n} u_kv_m.
\]
Hence, if we replace $(l,m)$ with $(k+m,m)$ in the double sum, we get
\begin{align*}
\hat{u}_j\hat{v}_{-j} &= 
\frac{1}{n}\sum_{k,m=0}^{n-1}\e^{\I 2\pi jk/n}u_{k+m}v_m\\
&= \frac{1}{n}\sum_{k=0}{n-1}\e^{\I 2\pi jk/n}
\bigg(\sum_{m=0}^{n-1}u_{k+m}v_m\bigg),
\end{align*}
which proves (\ref{2.6}). Note that (\ref{2.6}) can also be written as
\[
\Big(\hat{u}_j\hat{v}_{-j}\Big)_{j=0}^{n-1} =
\frac{1}{\sqrt{n}}F\Bigg(\bigg(\sum_{m=0}^{n-1}u_{k+m}v_m\bigg)_{k=0}^{n-1}\Bigg).
\]
Since $F$ is unitary and symmetric, $F^{-1}=\overline{F}$
(complex conjugation). Thus
\[
\sqrt{n}\; \bar{F}\bigg(\Big(\hat{u}_j\hat{u}_{-j}\Big)_{j=0}^{n-1}\bigg) =
\bigg(\sum_{m=0}^{n-1}u_{k+m}v_m\bigg)_{k=0}^{n-1}
\]
which proves (\ref{2.7}). (\ref{2.8}) is the special case
$k=0$ of (\ref{2.7}). Note that (\ref{2.8}) can also be proved
by applying Parseval's formula
\[
\sum_{j=0}^{n-1}\hat{u}_j\bar{\hat{w}}_j=\sum_{m=0}^{n-1}u_m\bar{w}_m
\]
to $w=\bar{v}$.
\end{proof}

\begin{prop}\label{p2.2}
\noindent The equations (\ref{2.4}) for cyclic $n$-roots 
on $(x,y)$-level are equivalent to the following set of
equations for $(x,y)\in\mathbb{C}^n\times\mathbb{C}^n$.
\begin{equation}\label{2.9}
\begin{aligned}
x_0 = y_0 &= 1 && \\
x_ky_k &= 1, && 1\le k\le n-1\\
\hat{x}_k\hat{y}_{-k} &= 1, && 1\le k\le n-1 
\end{aligned}
\end{equation}
where $\hat{x}=Fx$ and $\hat{y}=Fy$ as in lemma \ref{l2.1}.
\end{prop}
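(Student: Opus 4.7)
The plan is to use Lemma \ref{l2.1} to translate between the ``time-domain'' third family of equations in (\ref{2.4}) and the ``Fourier-domain'' third family in (\ref{2.9}). Since both systems share the equations $x_0=y_0=1$ and $x_ky_k=1$ for $1\le k\le n-1$, it suffices to show, under these shared assumptions, that the quantities
\[
S_k := \sum_{m=0}^{n-1} x_{k+m}\,y_m \qquad (0\le k\le n-1)
\]
satisfy $S_k=0$ for $1\le k\le n-1$ if and only if $\hat x_k\hat y_{-k}=1$ for $1\le k\le n-1$.

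I would begin by recording the immediate consequence that the shared assumptions force $S_0=\sum_{m=0}^{n-1} x_m y_m = n$, since every summand equals $1$. Next, I would rewrite identity (\ref{2.6}) of Lemma \ref{l2.1} as
\[
\hat x_j\hat y_{-j} = \frac{1}{n}\sum_{k=0}^{n-1}\e^{\I 2\pi jk/n}\,S_k \qquad (0\le j\le n-1),
\]
which exhibits $(\hat x_j\hat y_{-j})_j$ as (a normalization of) the discrete Fourier transform of $(S_k)_k$. This display does all the work.

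The forward implication ((\ref{2.4}) $\Rightarrow$ (\ref{2.9})) is then immediate: assuming $S_k=0$ for $1\le k\le n-1$, the right-hand side above collapses to $S_0/n=1$ for \emph{every} $j$, in particular for $1\le j\le n-1$.

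For the reverse implication ((\ref{2.9}) $\Rightarrow$ (\ref{2.4})), the only mildly delicate point — and the one I expect to be the main obstacle — is that (\ref{2.9}) a priori only controls $\hat x_k\hat y_{-k}$ for $k\ne 0$, so Fourier inversion cannot be applied directly. The remedy is identity (\ref{2.8}) of Lemma \ref{l2.1}: it gives $\sum_{j=0}^{n-1}\hat x_j\hat y_{-j}=\sum_{m=0}^{n-1} x_my_m=n$, which combined with the $n-1$ hypotheses $\hat x_k\hat y_{-k}=1$ ($1\le k\le n-1$) pins down $\hat x_0\hat y_0=1$ as well. Once $\hat x_j\hat y_{-j}=1$ is known for \emph{all} $j$, Fourier inversion applied to the displayed identity yields $S_k=\sum_{j=0}^{n-1}\e^{-\I 2\pi jk/n}=n\,\delta_{k,0}$, so $S_k=0$ for $1\le k\le n-1$, as required. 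Apart from this one trick of using (\ref{2.8}) to bridge the gap at $k=0$, the argument is pure bookkeeping.
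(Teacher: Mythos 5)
Your proposal is correct and follows essentially the same route as the paper's own proof: identity (2.6) for the forward implication, then (2.8) to pin down $\hat x_0\hat y_0=1$, followed by the inversion formula (2.7) to recover $S_k=0$ for $1\le k\le n-1$. The one point you flag as delicate (bridging the gap at $k=0$ via (2.8)) is exactly the step the paper makes explicit as well.
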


\begin{proof}
Assume $(x,y)$ is a solution to (\ref{2.4}). By the last $n-1$
equations of (\ref{2.4}),
\[
\sum_{m=0}^{n-1}x_{k+m}y_m = 0\qquad 1\le k\le n-1
\]
and by the first $n+1$ equations of (\ref{2.4})
\[
\sum_{m=0}^{n-1}x_my_m=n
\]
Hence, by (\ref{2.6})
\begin{align*}
\hat{x}_j\hat{y}_{-j} &= 
\frac{1}{n}\sum_{k=0}^{n-1}\e^{\I 2\pi jk/n}
\bigg(\sum_{m=0}^{n-1}u_{k+m}v_m\bigg)\\
&= \frac{1}{n}(n+0+\ldots+0)\\
&= 1
\end{align*}
for $j=0,\ldots,n-1$. Hence (\ref{2.4}) implies (\ref{2.9}). 
Conversely if $(x,y)\in\mathbb{C}^n\times\mathbb{C}^n$
satisfies (\ref{2.9}), then
\[
\hat{x}_j\hat{y}_{-j} = 1\quad\textrm{for}\quad 1\le j\le k
\]
By (\ref{2.8}) and the first $n+1$ equations of (\ref{2.9}),
\[
\sum_{j=0}^{n-1} \hat{x}_j\hat{y}_{-j}=\sum_{m=0}^{n-1}x_my_m=n
\]
and therefore
\[
\hat{x}_0\hat{y}_0=n-\sum_{j=1}^{n-1}\hat{x}_j\hat{y}_{-j} = n-(n-1) = 1
\]
Thus by (\ref{2.7})
\begin{align*}
\sum_{k=0}^{n-1}x_{k+m}y_m &=
\sum_{j=0}^{n-1}\e^{-\I 2\pi kj/n}\hat{u}_j\hat{v}_{-j}\\
&= \sum_{j=0}^{n-1}\e^{-\I 2\pi kj/n}\\
&= 0\quad\textrm{for}\quad 1\le k\le n-1
\end{align*}
Hence (\ref{2.9}) implies (\ref{2.4}).
\end{proof}
For later use, (cf. proof of Corollary 5.4.)
we prove the following extension of Proposition 2.2. 

\begin{prop}\label{p2.3}
\noindent Let $a_1,\ldots,a_{n-1},c_1,\ldots,c_{n-1}\in\mathbb{C}$.
Then for $(x,y)\in\mathbb{C}^n\times\mathbb{C}^n$, the set of equations
\begin{equation}\label{2.10}
\begin{aligned}
x_0=y_0 &= 1\\
x_ky_k &= a_k,\quad 1\le k\le n-1\\
\sum_{m=0}^{n-1}x_{k+m}y_m &= c_k, \quad 1\le k\le n-1
\end{aligned}
\end{equation}
is equivalent to
\begin{equation}\label{2.11}
\begin{aligned}
x_0=y_0 &=1\\
x_ky_k &= a_k,\quad 1\le k\le n-1\\
\hat{x}_k\hat{y}_{-k} &= b_k,\quad 1\le k\le n-1
\end{aligned}
\end{equation}
where
\begin{equation}\label{2.12}
b_j=\frac{1}{n}\bigg(1+\sum_{m=1}^{n-1}a_m+
\sum_{k=1}^{n-1}\e^{\I 2\pi jk/n}c_k\bigg),\quad 1\le j\le n-1
\end{equation}
Moreover for fixed $a_1,\ldots,a_{n-1},b_1,\ldots,b_{n-1}\in\mathbb{C}$, 
the $n-1$ equations (\ref{2.12}) have a unique solution
$(c_1,\ldots,c_{n-1})\in\mathbb{C}^{n-1})$ given by
\begin{equation}\label{2.13}
c_k=1+\sum_{m=1}^{n-1}a_m+\sum_{j=1}^{n-1}(\e^{-\I 2\pi kj/n}-1)b_j,
\quad 1\le k\le n-1.
\end{equation}
\end{prop}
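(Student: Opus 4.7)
The plan is to imitate the proof of Proposition 2.2, carrying along the extra constants $a_k$ and $c_k$. Both systems (\ref{2.10}) and (\ref{2.11}) contain the $n$ common equations $x_0 = y_0 = 1$ and $x_k y_k = a_k$ for $1 \le k \le n-1$; in particular each implies $\sum_{m=0}^{n-1} x_m y_m = 1 + \sum_{m=1}^{n-1} a_m$, which I denote $s$. It therefore suffices to show that, under these common conditions, the last $n-1$ equations of (\ref{2.10}) are equivalent to the last $n-1$ equations of (\ref{2.11}) with $b_j$ given by (\ref{2.12}), and then separately to invert the linear map (\ref{2.12}) in order to obtain (\ref{2.13}).

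For the direction (\ref{2.10})$\Rightarrow$(\ref{2.11}), I would apply Lemma \ref{l2.1} formula (\ref{2.6}) with $u=x$ and $v=y$. In the outer sum the $k=0$ term contributes $s$ while the terms $1 \le k \le n-1$ contribute $\e^{\I 2\pi jk/n} c_k$, so splitting them gives
\[
\hat x_j \hat y_{-j} = \frac{1}{n}\Big(s + \sum_{k=1}^{n-1} \e^{\I 2\pi jk/n} c_k\Big),
\]
which is precisely the definition (\ref{2.12}) of $b_j$ for $1 \le j \le n-1$. For the converse (\ref{2.11})$\Rightarrow$(\ref{2.10}), I would first use (\ref{2.8}) together with $\sum_m x_m y_m = s$ to pin down the missing $j=0$ value as $\hat x_0 \hat y_0 = s - \sum_{j=1}^{n-1} b_j$. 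I would then invert the Fourier transform via (\ref{2.7}), writing $\sum_m x_{k+m} y_m = \sum_{j=0}^{n-1} \e^{-\I 2\pi kj/n} \hat x_j \hat y_{-j}$, substitute the known values $\hat x_j \hat y_{-j} = b_j$ together with the value of $\hat x_0 \hat y_0$, and rearrange; this produces exactly the formula (\ref{2.13}) for $c_k$.

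For the moreover statement I view (\ref{2.12}) as a shifted linear Fourier map from $(c_1,\ldots,c_{n-1})$ to $(b_1,\ldots,b_{n-1})$; the candidate inverse (\ref{2.13}) is verified by substituting (\ref{2.13}) into the right side of (\ref{2.12}) and applying the orthogonality relations $\sum_{j=0}^{n-1} \e^{\I 2\pi j(k-l)/n} = n\delta_{kl}$ and $\sum_{j=1}^{n-1} \e^{\I 2\pi jl/n} = -1$ (for $1 \le l \le n-1$), which collapse the resulting double sum back to $b_j$. Uniqueness then follows automatically since the map is bijective.

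The only point that requires genuine care is the index convention $\hat y_{-j}$ versus $\hat y_j$ in the formulas of Lemma \ref{l2.1}; but once this bookkeeping is set up, the proof reduces to the same inverse-Fourier-transform calculation used in Proposition \ref{p2.2}, enriched by two constant sequences, and no real obstacle arises.
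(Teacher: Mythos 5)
Your proposal is correct and follows essentially the same route as the paper: both directions are handled by formulas (\ref{2.6})--(\ref{2.8}) of Lemma \ref{l2.1}, splitting off the index-$0$ terms $\sum_m x_m y_m = 1+\sum_m a_m$ and $\hat x_0\hat y_0$, and the passage between (\ref{2.12}) and (\ref{2.13}) is the same inverse-Fourier computation (the paper carries it out by adjoining $b_0$ and $c_0$ and inverting, while you verify the candidate inverse via orthogonality, which is an equivalent calculation). No gaps.
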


\begin{proof}
Assume (\ref{2.10}). Then
\[
\sum_{m=0}^{n-1}x_my_m=1+\sum_{m=1}^{n-1}a_m.
\]
Hence by (\ref{2.6}) and the last $(n-1)$ equations in (\ref{2.10})
\begin{align*}
\hat{x}_j\hat{y}_{-j} &= \frac{1}{n}
\bigg(\sum_{m=1}^{n-1}x_my_m + \sum_{k=1}^{n-1}\e^{\I 2\pi jk/n}
\bigg(\sum_{m=0}^{n-1}x_{k+m}y_m\bigg)\bigg)\\
&= \frac{1}{n}
\bigg(1+\sum_{m=1}^{n-1}a_m+\sum_{k=1}^{n-1}\e^{\I 2\pi jk/n}c_k\bigg)
\end{align*}
for $0\le j\le n-1$. Hence (\ref{2.10}) implies (\ref{2.11}), 
with $b_1,\ldots,b_{n-1}$ as in (\ref{2.12}).\\
\\
We next show that (\ref{2.12}) implies (\ref{2.13}). Put
\[
b_0=\frac{1}{n}\bigg(1+\sum_{m=1}^{n-1}a_m+\sum_{k=1}^{n-1}c_k\bigg).
\]
Then (\ref{2.12}) holds for $0\le j\le n-1$. Hence, if we furthermore put
\[
c_0=1+\sum_{m=1}^{n-1}a_m,
\]
then
\[
b_j=\frac{1}{n}\sum_{k=0}^{n-1}\e^{\I 2\pi jk/n}c_k,\quad j=0,\ldots,n-1.
\]
Hence, by Fourier inversion, we have
\begin{equation}\label{2.14}
c_k=\sum_{j=0}^{n-1}\e^{-\I 2\pi jk/n}b_j,\quad k=0,\ldots,n-1
\end{equation}
In particular
\[
1+\sum_{m=1}^{n-1}a_m=c_0=b_0+\sum_{j=1}^{n-1}b_j.
\]
Therefore
\[
b_0=1+\sum_{m=1}^{n-1}a_m-\sum_{j=1}^{n-1}b_j
\]
which inserted in (\ref{2.14}) gives
\begin{align*}
c_k &= b_0+\sum_{j=1}^{n-1}\e^{-\I 2\pi jk/n}b_j\\
&= 1+\sum_{m=1}^{n-1}a_m+\sum_{j=1}^{n-1}(\e^{-\I 2\pi jk/n}-1)b_j,
\quad 0\le k\le n-1
\end{align*}
which proves (\ref{2.13}).\\
\\
Finally, we show that (\ref{2.11}) implies (\ref{2.10}), when
(\ref{2.12}) holds (or equivalently (\ref{2.13}) holds).
Assume $(x,y)\in\mathbb{C}^n\times\mathbb{C}^n$ satisfies
(\ref{2.11}) for given $a_1,\ldots,a_{n-1},
b_1,\ldots,b_{n-1}\in\mathbb{C}$. By (\ref{2.11}) and (\ref{2.8})
we get
\[
1+\sum_{m=1}^{n-1}a_m = \sum_{m=0}^{n-1}x_my_m = 
\sum_{j=0}^{n-1}\hat{x}_j\hat{y}_{-j}=
\hat{x}_0\hat{y}_0+\sum_{j=1}^{n-1}bj.
\]
Therefore
\begin{equation}\label{2.15}
\hat{x}_0\hat{y}_0 = 1+\sum_{m=1}^{n-1}a_m-\sum_{j=1}^{n-1}b_j
\end{equation}
Hence by (\ref{2.7}) we have for $0\le k\le n-1$,
\begin{align*}
\sum_{m=0}^{n-1}x_{k+m}y_m &=
\sum_{j=0}^{n-1}\e^{-\I 2\pi kj/b}\hat{x}_j\hat{y}_{-j}\\
&= \hat{x}_0\hat{y}_0+\sum_{j=0}^{n-1}\e^{-\I 2\pi kj/n}bj\\
&= 1+\sum_{m=1}^{n-1}a_m + \sum_{j=0}^{n-1}(\e^{-\I 2\pi kj/n}-1)b_j.
\end{align*}
Thus (\ref{2.11}) implies (\ref{2.10}) with $c_1,\ldots,c_{n-1}$
given by (\ref{2.13}).
\end{proof}

\section{Finiteness of the set of cyclic $p$-roots of prime length $p$}
We shall use the following two classical results:

\begin{theorem}\label{thm3.1}
A compact algebraic variety in $\mathbb{C}^n$ is a finite set.\\
\end{theorem}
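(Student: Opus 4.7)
The plan is to combine the metric compactness of $V$ (which forces boundedness of each coordinate projection) with an algebraic constraint (which forces each coordinate projection to be finite). Concretely, any compact subset of $\mathbb{C}^n$ is bounded in the Euclidean topology, so for each $i\in\{1,\ldots,n\}$ the $i$-th coordinate projection $z_i(V)\subset\mathbb{C}$ is bounded.

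The key algebraic input is Chevalley's theorem on constructible sets: the image of a Zariski-constructible set under a polynomial (regular) map is again Zariski-constructible. Applied to the polynomial map $z_i\colon V\to\mathbb{C}$ this shows that $z_i(V)$ is a constructible subset of $\mathbb{C}$. Since the Zariski-closed subsets of $\mathbb{C}$ are precisely the finite subsets together with $\mathbb{C}$ itself, every constructible subset of $\mathbb{C}$ is either finite or cofinite. A cofinite subset of $\mathbb{C}$ is unbounded, so the boundedness of $z_i(V)$ forces $z_i(V)$ to be finite.

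Since each of the $n$ coordinates takes only finitely many values on $V$, we have the inclusion $V\subseteq z_1(V)\times\cdots\times z_n(V)$, a Cartesian product of finite sets, hence $V$ itself is finite. The only non-elementary ingredient is Chevalley's theorem; everything else is direct.

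An alternative route, which I would keep in reserve in case one prefers not to invoke constructibility explicitly, is to argue by contradiction via Noether normalization. Assuming $V$ is infinite, decompose $V$ into finitely many irreducible components (using that $\mathbb{C}[z_1,\ldots,z_n]$ is Noetherian); at least one component $W$ is infinite, so $\dim W\ge 1$, and $W$ is closed in $V$, hence compact. Noether normalization produces a finite surjective polynomial map $\pi\colon W\to\mathbb{C}^d$ with $d=\dim W\ge 1$, which is continuous in the Euclidean topology; then $\pi(W)=\mathbb{C}^d$ is the continuous image of the compact set $W$, hence compact, contradicting the non-compactness of $\mathbb{C}^d$ for $d\ge 1$. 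The main obstacle in either approach is the algebro-geometric fact used (Chevalley's theorem or Noether normalization); both are entirely standard.
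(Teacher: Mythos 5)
Your argument is correct, but it is a genuinely different route from the paper's: the paper offers no proof at all, simply citing [Ru1, Thm.~14.3.1], where the result is obtained by complex-analytic means (essentially the maximum principle applied to the coordinate functions on a compact analytic variety). Your first argument is purely algebro-geometric: Chevalley's theorem makes each coordinate image $z_i(V)$ constructible in $\mathbb{C}$, hence finite or cofinite, and Euclidean boundedness of the compact set $V$ rules out the cofinite case, so $V$ sits inside a finite product of finite sets. This is complete and correct; the only point worth making explicit is that an algebraic variety is in particular Zariski-closed, hence constructible, so Chevalley applies. Your fallback via Noether normalization is also sound: an infinite irreducible component $W$ has $\dim W\ge 1$, is Euclidean-closed in $V$ hence compact, and the finite surjective morphism $W\to\mathbb{C}^d$ (surjective because finite ring extensions satisfy going-up and the ring map is injective) would exhibit the non-compact space $\mathbb{C}^d$ as a continuous image of a compact set. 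What your approach buys is a self-contained algebraic proof that works verbatim over any field complete with respect to an absolute value in which the affine line is unbounded, and avoids the analytic machinery of Rudin's reference; what the cited analytic proof buys is the stronger statement that compact \emph{analytic} (not just algebraic) subvarieties of $\mathbb{C}^n$ are finite.
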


\prf This is well known, see e.g. [Ru, Thm 14.3.i].

\begin{theorem}\label{thm3.2} (Chebotar\"ev, 1926).
Let $p$ be a prime number and let $F_p$ denote the unitary matrix
of Fourier transform on $\mathbb{C}^p$:
\[
F_p=\Big(\frac{1}{\sqrt{p}}\e^{\I 2\pi kl/p}\Big)_{k,l=0,\ldots,p-1}.
\]
Then for every two finite subsets $K,L\subseteq\{0,\ldots,p-1\}$ of
the same size $|K|=|L|\ge 1$, the corresponding submatrix
\[
(F_p)_{K\times L} =
\Big(\frac{1}{\sqrt{p}}\e^{\I 2\pi kl/p}\Big)_{k\in K,l\in L}
\]
has non-zero determinant.\\
\end{theorem}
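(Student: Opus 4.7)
My plan is a direct $\pi$-adic valuation computation in the cyclotomic ring $\mathbb{Z}[\zeta]$, where $\zeta=\e^{\I 2\pi/p}$. Recall that $\pi:=1-\zeta$ is a prime element of $\mathbb{Z}[\zeta]$ lying above $p$, that $(p)=(\pi)^{p-1}$, and that the residue field $\mathbb{Z}[\zeta]/(\pi)$ is isomorphic to $\mathbb{F}_p$. Dropping the harmless factor $1/\sqrt p$, it suffices to show that
\[
D:=\det\bigl(\zeta^{k_il_j}\bigr)_{i,j=1}^{n}\;\neq\; 0,
\]
where $K=\{k_1<\cdots<k_n\}$, $L=\{l_1<\cdots<l_n\}$ and $n\in\{1,\ldots,p\}$; the plan is to compute $v_\pi(D)$ explicitly and to find it equal to the finite number $\binom{n}{2}$.

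First I expand each entry via the identity $\zeta^{k_il_j}=(1-\pi)^{k_il_j}=\sum_{m\ge0}(-\pi)^m\binom{k_il_j}{m}$ in $\mathbb{Z}[\zeta]$ and apply Leibniz's formula to obtain
\[
D=\sum_{r\ge0}(-\pi)^r\sum_{\substack{m_1,\ldots,m_n\ge0\\ m_1+\cdots+m_n=r}}\det\bigl(\tbinom{k_il_j}{m_i}\bigr)_{i,j}.
\]
The key observation is that $\binom{k_il_j}{m_i}$, viewed as a function of $l_j$ for fixed $i$, is a polynomial of degree exactly $m_i$; consequently the $i$th row of the inner matrix lies in the span of the vectors $(l_j^a)_{j=1,\ldots,n}$ with $a=0,\ldots,m_i$. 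Since the $l_j$'s are distinct, this span has dimension $\min(m_i+1,n)$, and a simple dimension count then forces the inner determinant to vanish unless the sorted tuple $m_{(1)}\le\cdots\le m_{(n)}$ satisfies $m_{(k)}\ge k-1$ for every $k$. In particular every coefficient of $(-\pi)^r$ with $r<\binom{n}{2}$ is zero, and at $r=\binom{n}{2}$ the only contributing tuples $(m_1,\ldots,m_n)$ are rearrangements of $(0,1,\ldots,n-1)$.

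Parametrising each such rearrangement by $\tau\in S_n$ via $m_i=\tau(i)-1$, I factor the inner matrix as $C\cdot V$, where $V=(l_j^a)_{a,j}$ is the standard Vandermonde matrix and $C=(c_{i,a})$ is supported on $a<\tau(i)$ with leading entry $c_{i,\tau(i)-1}=k_i^{\tau(i)-1}/(\tau(i)-1)!$. Since $\tau$ is the unique permutation $\rho\in S_n$ satisfying $\rho(i)\le\tau(i)$ for all $i$, the Leibniz expansion of $\det C$ collapses to $\mathrm{sgn}(\tau)\prod_i k_i^{\tau(i)-1}/\prod_i(i-1)!$, and after summing over $\tau$ a second Vandermonde $\det(k_i^{j-1})_{i,j}=\prod_{i<j}(k_j-k_i)$ emerges, giving
\[
A\;:=\;\text{coefficient of $(-\pi)^{\binom{n}{2}}$ in $D$}\;=\;\frac{\prod_{i<j}(k_j-k_i)\prod_{i<j}(l_j-l_i)}{\prod_{i=1}^{n}(i-1)!}.
\]

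Finally I check that the integer $A$ is coprime to $p$. Since $n\le p$ and the $k_i$ (resp.\ $l_j$) are distinct elements of $\{0,\ldots,p-1\}$, every difference $k_j-k_i$ and $l_j-l_i$ lies in $\{\pm1,\ldots,\pm(p-1)\}$, and every factorial $(i-1)!$ with $i\le n\le p$ is a product of integers coprime to $p$; hence $p\nmid A$. Writing $D=(-\pi)^{\binom{n}{2}}\bigl(A+\pi\cdot r(\pi)\bigr)$ with $r(\pi)\in\mathbb{Z}[\zeta]$, the bracketed factor reduces to $\bar A\ne 0$ modulo $\pi$ and is therefore a unit in the local ring $\mathbb{Z}[\zeta]_{(\pi)}$; so $v_\pi(D)=\binom{n}{2}<\infty$, and in particular $D\neq 0$. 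The main technical obstacle is the middle step — the dimension-theoretic argument that pins down the order of vanishing at $\pi=0$ exactly, and the Vandermonde bookkeeping that produces the closed form for $A$; once this is in hand, the primality of $p$ is used only at the very end to guarantee that no factor in the numerator or denominator of $A$ is divisible by $p$, which is precisely where the hypothesis of the theorem cannot be weakened.
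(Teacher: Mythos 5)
Your proof is correct, but note that the paper does not actually prove Theorem~3.2 at all: it treats Chebotar\"ev's theorem as a classical fact and simply refers to [SL, p.~29--30], mentioning also that Tao's paper [Ta] contains a short self-contained proof. So your argument is not competing with a proof in the text but supplying one. It belongs to the same family as the classical valuation-theoretic proofs surveyed in [SL]: both pin down the exact $(1-\zeta)$-adic valuation of the minor as $\binom{n}{2}$ and then check that the bottom coefficient, a ratio of two Vandermonde products by a product of factorials, is prime to $p$. The organizational difference is that the classical route factors the polynomial $\det(X_i^{l_j})$ as $\prod_{i<j}(X_j-X_i)$ times a cofactor and evaluates that cofactor at $X_i=1$ modulo $p$, whereas you expand each entry as $(1-\pi)^{k_il_j}$ and isolate the lowest-order term of the $\pi$-expansion via a rank count; your version is arguably more elementary, replacing the divisibility-of-polynomials step by linear algebra over the points $l_1,\dots,l_n$. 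Two minor points, neither of which is a gap: the assertion that $\binom{k_il_j}{m_i}$ has degree \emph{exactly} $m_i$ in $l_j$ fails when $k_i=0$, but your argument only uses degree at most $m_i$ for the dimension count, and the leading coefficient $k_i^{\tau(i)-1}/(\tau(i)-1)!$ remains the correct one (the term just vanishes, consistently with the final Vandermonde in the $k_i$); and primality of $p$ is not used ``only at the very end'' --- the whole framework ($1-\zeta$ prime, $(p)=(\pi)^{p-1}$, $v_\pi$ restricting to $(p-1)v_p$ on $\mathbb{Z}$) already requires it, and indeed for $N$ with two distinct prime factors $1-\zeta_N$ is a unit, so the expansion would say nothing.
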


\prf  See [SL, p. 29-30] and references given there.
\medskip

The following application of Chebotar\"ev's Theorem has been known
to the author since 1996. After the results of this paper were
presented at CIRM in October 2005, we learned, that it has been
proved independently by Terence Tao (cf. [Ta, Thm 1.1]). In the
same paper, Tao also presents a short and selfcontained proof of
Chebotar\"ev's theorem.

\begin{prop}\label{p3.3}
Let $u=(u_0,\ldots,u_{p-1})\in\mathbb{C}^p$ and let
$\hat{u}=F_pu$ be the Fourier transformed vector. If $u\ne 0$, then
\begin{equation}\label{3.1}
|\supp(u)|+|\supp(\hat{u})|\ge p+1
\end{equation}
where for $z\in\mathbb{C}^p$, $|\supp(z)|$ denotes the number of $\;i\in\{0, 1,\ldots, p-1\}$ for which $z_i\ne 0$.
\end{prop}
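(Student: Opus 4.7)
The plan is to argue by contradiction using Chebotar\"ev's Theorem \ref{thm3.2} directly. Suppose $u\ne 0$ but $|\supp(u)|+|\supp(\hat{u})|\le p$. Set $K=\supp(u)$, so $|K|=a\ge 1$, and let $b=|\supp(\hat{u})|$. Since $|\supp(\hat{u})|=b$ and $a+b\le p$, the complement $\{0,\ldots,p-1\}\setminus\supp(\hat{u})$ has size $p-b\ge a$, so I can pick a subset $L\subseteq\{0,\ldots,p-1\}$ with $|L|=a=|K|$ and $L\cap\supp(\hat{u})=\emptyset$. By construction, $\hat{u}_j=0$ for every $j\in L$.

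Next, I would rewrite these $a$ vanishing Fourier coefficients as a linear system in the unknowns $(u_k)_{k\in K}$. Since $u_k=0$ for $k\notin K$, the definition of $\hat{u}$ gives
\[
0=\hat{u}_j=\frac{1}{\sqrt{p}}\sum_{k\in K}\e^{\I 2\pi jk/p}u_k,\qquad j\in L,
\]
i.e.\ $(F_p)_{L\times K}\,(u_k)_{k\in K}=0$. Here $(F_p)_{L\times K}$ is a square submatrix of $F_p$ of size $|L|=|K|$, and Theorem \ref{thm3.2} asserts exactly that such a submatrix is non-singular. Hence $(u_k)_{k\in K}=0$, contradicting the definition of $K$ as the support of $u$. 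This contradiction establishes (\ref{3.1}).

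The argument is essentially one line once one has Chebotar\"ev's theorem in hand; there is no real obstacle beyond correctly identifying the two index sets. The only minor point requiring care is the case $a=0$, which is trivially excluded by the hypothesis $u\ne 0$, ensuring that the set $K$ on which we apply Chebotar\"ev is non-empty and that the resulting submatrix is genuinely square of positive size.
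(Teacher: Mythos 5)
Your argument is correct and is essentially identical to the paper's own proof: both choose the support of $u$ as one index set, select an equally sized set of indices where $\hat{u}$ vanishes from the complement of $\supp(\hat{u})$, and invoke Chebotar\"ev's theorem to force $u=0$ on its support, a contradiction. The only difference is the purely notational swap of the roles of $K$ and $L$.
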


\begin{proof}\
Let $p$ be a prime number, let $u\in\mathbb{C}^p\setminus\{0\}$,
assume that
\[
|\supp(u)|+|\supp(\hat{u})|\le p.
\]
Put $L=\supp(u)$ and note that $L\ne\emptyset$. Moreover
\[
|\mathbb{Z}_p\setminus\supp(\hat{u})| =
p-|\supp(\hat{u})|\ge |\supp(u)|=|L|.
\]
Hence, we can choose $K\subseteq\mathbb{Z}_p\setminus\supp(\hat{u})$,
such that $|K|=|L|$. For every $k\in K$
\begin{equation}\label{3.2}
\frac{1}{\sqrt{p}}\sum_{l\in L}\e^{\I 2\pi kl/n}u_l = \hat{u}_k = 0.
\end{equation}
By Chebotar\"ev's Theorem (Theorem 3.2), the matrix
\[
\Big(\frac{1}{\sqrt{p}}\e^{\I 2\pi kl/n}\Big)_{k\in K,l\in L}
\]
has non-zero determinant. Hence by (\ref{3.2}) 
$u_l=0$ for all $l\in L=\supp(\hat{u})$, which implies that
$u=0$ and we have reached a contradictim. Therefore (\ref{3.1})
holds for every $u\in\mathbb{C}^p\setminus\{0\}$.
\end{proof}

\begin{lemma}\label{l3.4}
Let $n\in\mathbb{N}$. If the number of solutions
$(x,y)\in\mathbb{C}^n\times\mathbb{C}^n$ to (\ref{2.9}) is
infinite, then there exists $u,v\in\mathbb{C}^n\setminus\{0\}$, such that
\[
u_kv_k=0\quad\textrm{and}\quad \hat{u}_k\hat{v}_{-k} = 0
\]
for $k=0,1,\ldots,n-1$.
\end{lemma}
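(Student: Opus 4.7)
The plan is to bi-homogenize the system (\ref{2.9}) and apply Theorem \ref{thm3.1} to the resulting bi-projective variety in order to extract a ``point at infinity'' that supplies the desired $u$ and $v$.

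First I would enlarge the defining system slightly: any $(x,y)$ satisfying (\ref{2.9}) automatically satisfies $\hat{x}_0 \hat{y}_0 = 1$ as well, since by (\ref{2.8}) one has $\sum_{j=0}^{n-1} \hat{x}_j \hat{y}_{-j} = \sum_{m=0}^{n-1} x_m y_m = n$ with all the other $2(n-1)$ terms already prescribed. Hence (\ref{2.9}) is equivalent to
\begin{equation*}
x_0 = y_0 = 1, \qquad x_k y_k = x_0 y_0, \qquad \hat{x}_k \hat{y}_{-k} = x_0 y_0 \qquad (1 \le k \le n-1),
\end{equation*}
whose last two blocks consist of $2n-2$ bihomogeneous relations of bidegree $(1,1)$ in the pair $(x,y)$.

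I would then define $W \subset P_{n-1}\times P_{n-1}$ to be the bi-projective variety cut out by these $2n-2$ bihomogeneous equations, and let $U = \{x_0 \ne 0\} \times \{y_0 \ne 0\}$ denote the standard bi-affine chart, which is isomorphic to $\mathbb{C}^{2n-2}$. The map sending a solution $(x,y)$ of (\ref{2.9}) to $([x_0:\cdots:x_{n-1}],[y_0:\cdots:y_{n-1}])$ gives a bijection onto $W \cap U$, since the bihomogeneous equations are invariant under the separate scalings that normalize $x_0 = y_0 = 1$. Assume now that (\ref{2.9}) has infinitely many solutions. Then $W \cap U$ is infinite, so some irreducible component $W_0 \subseteq W$ has positive dimension and intersects $U$. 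Were $W_0$ contained in $U$, it would be a compact (closed inside the compact space $P_{n-1}\times P_{n-1}$) algebraic subvariety of the affine space $U \cong \mathbb{C}^{2n-2}$, contradicting Theorem \ref{thm3.1}. Hence $W_0$ must contain a point $P = ([u_0:\cdots:u_{n-1}],[v_0:\cdots:v_{n-1}])$ with $u_0 v_0 = 0$. Evaluating the defining equations of $W$ at $P$ produces $u_k v_k = 0$ and $\hat{u}_k \hat{v}_{-k} = 0$ for $1 \le k \le n-1$, and the remaining identity $\hat{u}_0 \hat{v}_0 = 0$ drops out of (\ref{2.8}) applied to these $u,v$, since $\sum_j \hat{u}_j \hat{v}_{-j} = \sum_m u_m v_m = 0$. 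Non-vanishing of $u$ and $v$ is automatic from the fact that $[u],[v]$ are honest points of $P_{n-1}$.

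The delicate step, to my eye, is the choice of compactification. A naive single projectivization in $\mathbb{P}^{2n-1}$ would destroy the bilinear structure of $x_k y_k$ and $\hat{x}_k \hat{y}_{-k}$, so it is essential to biprojectivize factor-wise in $P_{n-1}\times P_{n-1}$, keeping the defining equations homogeneous in $x$ and $y$ separately. Once that is set up, Theorem \ref{thm3.1} does the real work, and the Parseval-type identity (\ref{2.8}) delivers the missing equation $\hat{u}_0 \hat{v}_0 = 0$ essentially for free.
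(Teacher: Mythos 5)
Your proof is correct, but it reaches the conclusion by a genuinely different mechanism than the paper. The paper stays entirely on the affine/analytic side: it notes that the solution variety $W\subseteq\mathbb{C}^{2n}$ is closed, so if it is infinite it must be unbounded by Theorem \ref{thm3.1} and Heine--Borel; it then picks a sequence of solutions with $\|x^{(m)}\|_2^2+\|y^{(m)}\|_2^2\to\infty$, normalizes $u^{(m)}=x^{(m)}/\|x^{(m)}\|_2$, $v^{(m)}=y^{(m)}/\|y^{(m)}\|_2$, extracts a convergent subsequence on the compact sphere, and obtains $u_kv_k=\hat{u}_k\hat{v}_{-k}=\lim(\|x^{(m)}\|_2\|y^{(m)}\|_2)^{-1}=0$ for all $k$, the $k=0$ case coming from $\hat{x}_0\hat{y}_0=1$, which is itself derived from (\ref{2.8}). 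You instead take the Zariski closure of the (bihomogenized) system in $P_{n-1}\times P_{n-1}$, observe that an infinite solution set forces a positive-dimensional irreducible component, and use Theorem \ref{thm3.1} to rule out that this component lies entirely in the bi-affine chart $\{x_0\neq 0\}\times\{y_0\neq 0\}$; the resulting point at infinity, with $u_0v_0=0$ propagated through the bidegree-$(1,1)$ equations and (\ref{2.8}), is your pair $(u,v)$. Both arguments use Theorem \ref{thm3.1} in the same role (no infinite compact affine variety), and both finish with the Parseval identity (\ref{2.8}) to recover the $k=0$ equation; the difference is purely in how the degenerate pair is produced --- an analytic limit of normalized solutions versus an algebraic boundary point of the bi-projective closure. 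Your version is conceptually cleaner and makes the relevance of the product compactification $P_{n-1}\times P_{n-1}$ (already hinted at by the paper around (\ref{2.5})) explicit, at the cost of invoking irreducible components and dimension theory; the paper's version is more elementary and self-contained, needing only sequential compactness of the sphere. One small remark: your opening observation that $\hat{x}_0\hat{y}_0=1$ for solutions of (\ref{2.9}) is not actually used later in your argument, since you rederive $\hat{u}_0\hat{v}_0=0$ directly from (\ref{2.8}) at the limit point; it can be dropped without loss.
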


\begin{proof}\ 
Let $W\subseteq\mathbb{C}^n\times\mathbb{C}^n$ be the set of
solutions to the $2n$ polynomial equations (\ref{2.9}) and assume that $W$ have infinite many elements. Since 
$W$ is an algebraic variety, we get by Theorem 3.1 and the
Heine-Borel Theorem, that $W$ is an unbounded set. Put
\[
\| z\|_{2} = \bigg(\sum_{j=0}^{n-1}|z_j|^2\bigg)^\frac{1}{2},
\quad z\in\mathbb{C}^n.
\]
We choose a sequence of elements $(x^{(m)},y^{(m)})$ in $W$,
$(m\in\mathbb{N})$ such that
\begin{equation}\label{3.3}
\lim_{n\rightarrow\infty}
\big(\| x^{(m)}\|_2^2 + \| y^{(m)}\|_2^2\big)^\frac{1}{2}=+\infty.
\end{equation}
Put next
\[
u^{(m)} = \frac{1}{\| x^{(m)}\|_2}x^{(m)},\quad v^{(m)}=
\frac{1}{\| y^{(m)}\|_2}y^{(m)}.
\]
Then $\|u^{(m)}\|_2=\|v^{(m)}\|_2=1$, i.e.
$(u^{(m)},v^{(m)})\in S^{2n-1}\times S^{2n-1}$ 
where $S^{2n-1}$ denotes the unit sphere in $\mathbb{C}^n$. 
Since $S^{2n-1}\times S^{2n-1}$ is compact, we can by passing to
a subsequence assume that
\[
\lim_{m\rightarrow\infty}(u^{(m)},v^{(m)})=(u,v)
\]
for some $u,v\in S^{2n-1}$. Since
$x,y\in {W}$, $x_0^{(m)}=y_0^{(m)}=1$ for all
$m\in\mathbb{N}$. Therefore
\[
\| x^{(m)}\|_2^2 = 1+c_m,\quad \|y^{(m)}\|_2^2 = 1+d_m
\]
for some non-negative real numbers $c_m$, $d_m$. Thus
\[
\| x^{(m)}\|_2^2\| y^{(m)}\|_2^2 = (1+c_m)(1+d_m) \ge 1+c_m+d_m
= \| x^{(m)} \|_2^2 + \| y^{(m)}\|_2^2 - 1.
\]
Hence by (\ref{3.3}),
\begin{equation}\label{3.4}
\lim_{n\rightarrow\infty}\| x^{(m)}\|_2\|y^{(m)}\|_2 = +\infty.
\end{equation}
Since $(x^{(m)},y^{(m)})$ satisfies (\ref{2.9}) for all $m$,
we have for $1\le k\le n-1$
\[
x_k^{(m)}y_k^{(m)} = 1,\quad \widehat{x_k^{(m)}}\widehat{y_{-k}^{(m)}} = 1
\]
and the same equalities holds for $k=0$, by (\ref{2.8}) combined
with $x_0^{(m)}=y_0^{(m)}=1$. Therefore
\[
u_kv_k=\hat{u}_k\hat{v}_{-k}=
\lim_{m\rightarrow\infty}\big(\|x^{(m)}\|_2\|y^{(m)}\|_2\big)^{-1}=0
\]
for $0\le k\le n-1$, which proves lemma 3.4.
\end{proof}

\begin{theorem}\label{thm3.5}
Let $p$ be a prime number, then the set of cyclic $p$-roots is finite.
\end{theorem}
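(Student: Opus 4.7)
The plan is to assemble the pieces already developed into a direct contradiction argument. By Proposition \ref{p2.2} together with the bijection established in Section \ref{reformation} between cyclic $p$-roots (on $z$-level) and solutions of (\ref{2.4}) on $(x,y)$-level, the cyclic $p$-roots are in bijective correspondence with the solutions $(x,y)\in \mathbb{C}^p\times\mathbb{C}^p$ of the system (\ref{2.9}) with $n=p$. It therefore suffices to prove that this system has only finitely many solutions.

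Suppose for contradiction that the solution set of (\ref{2.9}) is infinite. Lemma \ref{l3.4} then produces vectors $u,v\in\mathbb{C}^p\setminus\{0\}$ with
$$ u_k v_k = 0 \quad\text{and}\quad \hat{u}_k\hat{v}_{-k}=0 \quad\text{for all } k=0,1,\ldots,p-1. $$
The first family of equations says that $\supp(u)\cap \supp(v)=\emptyset$, hence $|\supp(u)|+|\supp(v)|\le p$. The second family says that $\supp(\hat{u})$ is disjoint from the reflected support $-\supp(\hat{v})=\{-k:\hat{v}_k\ne 0\}$; since reflection $k\mapsto -k$ on $\mathbb{Z}_p$ preserves cardinality, we also obtain $|\supp(\hat{u})|+|\supp(\hat{v})|\le p$. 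Adding these two inequalities gives
$$ \bigl(|\supp(u)|+|\supp(\hat{u})|\bigr) + \bigl(|\supp(v)|+|\supp(\hat{v})|\bigr) \le 2p. $$

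On the other hand, since $p$ is prime and both $u$ and $v$ are nonzero, Proposition \ref{p3.3} yields
$$ |\supp(u)|+|\supp(\hat{u})|\ge p+1, \qquad |\supp(v)|+|\supp(\hat{v})|\ge p+1, $$
so the left-hand side of the previous inequality is at least $2p+2$, a contradiction. Consequently the solution set of (\ref{2.9}) is finite, and the set of cyclic $p$-roots is finite.

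There is no real obstacle left once Lemma \ref{l3.4} and Proposition \ref{p3.3} are available: the subtle preparation has already been done. The only bookkeeping point worth flagging is the sign reflection in the Fourier-side condition $\hat{u}_k\hat{v}_{-k}=0$, which must be combined with $|-\supp(\hat{v})|=|\supp(\hat{v})|$ in $\mathbb{Z}_p$ so that Chebotar\"ev's uncertainty bound can be applied symmetrically to $u$ and to $v$.
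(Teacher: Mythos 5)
Your argument is correct and is essentially identical to the paper's own proof: both reduce to the system (\ref{2.9}) via the Section \ref{reformation} reformulations, invoke Lemma \ref{l3.4} to extract nonzero $u,v$ with disjoint supports on both sides of the Fourier transform, and derive the contradiction $2p \ge 2(p+1)$ from Proposition \ref{p3.3}. Nothing to add.
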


\begin{proof}
The transformations of the cyclic $n$-root problem in section 2
from (\ref{2.1}) to (\ref{2.3}) and later from
(\ref{2.3}) to (\ref{2.4}) and (\ref{2.9}) do not change the
number of distinct solutions. Therefore it is sufficient to show,
that the set of solutions $W$ to (\ref{2.9}) is finite in the
case $n=p$.

Assume $|W|=+\infty$. Then by lemma 3.5 there exist 
$u,v\in\mathbb{C}^p\setminus\{0\}$, such that
\[
u_kv_k=0\quad\textrm{and}\quad \hat{u}_k\hat{v}_{-k}=0
\]
for $k=0,1,\ldots,p-1$, i.e.
\begin{equation*}
\supp(u) \cap \supp(v) = \emptyset\quad {\text and}\quad
\supp(\hat{u}) \cap (-\supp(\hat{v})) = \emptyset
\end{equation*}
Hence
\begin{equation*}
|\supp(u)| + |\supp(v)| \le p\quad {\text and}\quad
|\supp(\hat{u})| + |\supp(\hat{v})| \le p
\end{equation*}
and therefore
\begin{equation}\label{3.5}
|\supp(u)|+|\supp(\hat{u})|+|\supp(v)|+|\supp(\hat{v})|\le 2p.
\end{equation}
However, by Proposition 3.3 the left hand side of (\ref{3.5}) is
larger or equal to $2(p+1)$. This gives a contradiction, and we
have therefore proved, that the set $W$ of solutions to
(\ref{2.9}) is finite.
\end{proof}

\section{Multiplicity of a proper holomorphic function}
Let $U$, $V$ be regions in $\mathbb{C}^n$ (i.e. $U$ and $V$ are 
non-empty connected open subsets of $\mathbb{C}^n$). A holomorphic
function $\varphi: U\rightarrow V$ is called \emph{proper} if for
every compact subset $K$ and $V$,
$\varphi^{-1}(K)=\{z\in U\ |\ \varphi(z)\in K\}$ is a compact
subset of $U$. When $\varphi$ is proper its Jacobian
$J(z)=\det(\varphi'(z))$ can not vanish for all $z\in U$
(cf. [Ru1, 15.1.3]). Following [Ru1, 15.1.4], we let $M$ denote
the set
\[
M=\{z\in U\ |\ J(z)=0\}.
\]
Its range $\varphi(M)\subseteq V$ is called the set of
\emph{critical values} for $\varphi$ and $V\setminus\varphi(M)$
is called the set of \emph{regular values} for $\varphi$. By
[Ru, Prop. 15.1.5 and Thm. 15.1.9] we have

\begin{theorem}\label{thm4.1}
Let $U$, $V$ be regions in $\mathbb{C}^n$ and let 
$\varphi: U\rightarrow V$ be a proper holomorphic function and let
$\varphi(M)$ be the set of critical values for $\varphi$, then
\begin{itemize}
\item[(a)] $\varphi(U)=V$.
\item[(b)] The set $V\setminus\varphi{(M)}$ of regular values 
for $\varphi$ is a connected, open and dense subset of $V$.
\item[(c)] There is a unique natural number $m\in N$ (called
the multiplicity of $\varphi$) such that the number of elements
$|\varphi^{-1}(w)|$ in $\varphi^{-1}(w)$ satisfies
\begin{align*}
|\varphi^{-1}(w)| &= m\quad\textrm{for}\quad w\in V\setminus\varphi(M)\\
|\varphi^{-1}(w)| &< m\quad\textrm{for}\quad w\in\varphi(M).
\end{align*}
\item[(d)] The critical set $\varphi(M)$ is a zero-variety in $V$,
i.e. $\varphi(M)=\{w\in V\ |\ h(w)=0\}$ for some holomorphic
function $h: U\rightarrow\mathbb{C}$.
\end{itemize}
\end{theorem}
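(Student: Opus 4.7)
The plan rests on four standard ingredients from complex analysis in several variables: (i) Theorem 3.1, that a compact analytic subset of $\mathbb{C}^n$ is finite; (ii) the holomorphic inverse function theorem; (iii) the fact that a finite-to-one holomorphic map has local mapping degree at least $2$ at any critical point; and (iv) Remmert's proper mapping theorem, which states that the image of an analytic set under a proper holomorphic map is again analytic.

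The first step is to establish that every fibre $\varphi^{-1}(w)$ is finite: since $\varphi$ is proper, $\varphi^{-1}(w)$ is compact in $U$; it is also the zero-set of the holomorphic map $z\mapsto\varphi(z)-w$, hence an analytic subset of $U$, and therefore finite by Theorem 3.1. Next, at a regular value $w\in V\setminus\varphi(M)$, every preimage $z\in\varphi^{-1}(w)$ satisfies $J(z)\neq 0$, so by the inverse function theorem $\varphi$ restricts to a biholomorphism from a neighbourhood of $z$ onto a neighbourhood of $w$. Shrinking suitably, I would conclude that $w$ has a neighbourhood on which $|\varphi^{-1}(\cdot)|$ is constant, so the fibre-counting function is locally constant on $V\setminus\varphi(M)$.

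For parts (b), (c), and (a) I would argue as follows. Since $M$ is the zero-set of the holomorphic Jacobian $J$ on $U$ and $J\not\equiv 0$ by [Ru1, 15.1.3], $M$ is a proper analytic subset of $U$. Remmert's proper mapping theorem then gives that $\varphi(M)$ is a proper analytic subset of $V$, whose complement in the connected region $V$ is open, dense, and connected, proving (b). Combined with the locally constant fibre count this yields a single integer $m=|\varphi^{-1}(w)|$ for every regular $w$. For (a), $\varphi(U)$ contains the dense set $V\setminus\varphi(M)$ and is closed in $V$ (proper maps are closed), hence $\varphi(U)=V$. The strict inequality in (c) follows from the local mapping degree fact: at any critical point $z_{0}\in M\cap\varphi^{-1}(w)$, nearby regular values have at least two preimages in a small neighbourhood of $z_{0}$, while at each non-critical $z\in\varphi^{-1}(w)\setminus M$ they have exactly one; summing over the finite set $\varphi^{-1}(w)$ gives $m\ge|\varphi^{-1}(w)\setminus M|+2|M\cap\varphi^{-1}(w)|>|\varphi^{-1}(w)|$.

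The main obstacle is the final assertion in (d): that $\varphi(M)$ is not merely an analytic set but the zero-set of a single globally defined holomorphic function $h$ on $V$. Remmert's theorem supplies only local defining functions, and promoting them to a single global $h$ on the region $V\subseteq\mathbb{C}^n$ is the delicate point. A natural candidate is a discriminant-type symmetric function built from the $m$-sheeted branched cover $\varphi\colon\varphi^{-1}(V\setminus\varphi(M))\to V\setminus\varphi(M)$, for example a symmetric polynomial in the values $J(z)$ for $z\in\varphi^{-1}(w)$, which is holomorphic and nonvanishing on the regular set and should extend holomorphically across $\varphi(M)$ by a Riemann-type removable singularity argument, with zero-set exactly $\varphi(M)$. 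Verifying this extension is where I would follow [Ru1, Thm 15.1.9].
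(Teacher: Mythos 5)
The paper offers no argument here at all: Theorem 4.1 is quoted from Rudin with the one-line justification ``By [Ru1, Prop.\ 15.1.5 and Thm.\ 15.1.9]''. So your sketch cannot be matched against an argument in the paper; what you have written is essentially the standard proof that the cited reference carries out (finiteness of compact analytic sets, local structure of finite holomorphic maps, Remmert's proper mapping theorem for (b), a discriminant for (d)), and as an outline it is sound. Supplying it makes the section self-contained, which the paper does not attempt.

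Two places need shoring up before this counts as a proof rather than a plan. First, in (a) the assertion that $\varphi(U)\supseteq V\setminus\varphi(M)$ is itself the nontrivial content: from local constancy of $w\mapsto|\varphi^{-1}(w)|$ on the connected set $V\setminus\varphi(M)$ you only get a constant $m\ge 0$, and you must rule out $m=0$. This is done, e.g., by choosing $z_0$ with $J(z_0)\neq 0$ (possible since $J\not\equiv 0$), so that $\varphi(U)$ contains a nonempty open set, which must meet the dense regular set; only then is $m\ge 1$ and hence $V\setminus\varphi(M)\subseteq\varphi(U)$, after which closedness finishes (a). Relatedly, the local constancy itself uses properness essentially --- to guarantee that all preimages of points near $w$ stay near the compact fibre $\varphi^{-1}(w)$ --- and your ``shrinking suitably'' should make that explicit, since without it preimages could appear from elsewhere in $U$. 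Second, for (d) you correctly identify the only delicate step and propose the right object (for instance $h(w)=\prod_{z\in\varphi^{-1}(w)}J(z)$, locally bounded by properness, holomorphic off $\varphi(M)$, extended by a Riemann-type removability theorem across the analytic set $\varphi(M)$, and vanishing there because every point of $\varphi(M)$ has a critical preimage attracting preimages of nearby regular values), but you ultimately defer the verification to the very reference the paper cites, so this part remains a sketch rather than a proof. With those two points filled in, your route is a correct, self-contained replacement for the paper's citation.
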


\begin{remark} 
\emph{The set of critical values $\varphi(M)$ is a zero set with respect to the
$2n$-dimensional Lebesgue measure $m_{2n}$ in
$\mathbb{C}^n\approx\mathbb{R}^{2n}$, i.e. $m_{2n}(\varphi(M))=0$.
This follows from Sard's Theorem (cf. [AY, Theorem 0.11]).}
\end{remark}

\begin{prop} 
{\text{\rm{[AY, Chap 1, Prop. 2.1]}}}:
Let $U,V$ be regions in $\mathbb{C}^n$
and let $\varphi: U\rightarrow V$ be a holomorphic function. Let
$a\in U$ be an isolated zero for $\varphi$, and choose a
neighborhood $U_a$ of $a$, such that $\varphi(z)\ne 0$ when
$z\in U_a\setminus\{ a\}$. Then there exists an $\varepsilon > 0$
such that for Lebesgue almost all $w\in B(0,\varepsilon)$, the function
\begin{equation}\label{4.2}
\varphi_w(z) = \varphi(z)-w
\end{equation}
has only simple zeros in $U_a$ (i.e, the Jacobian
$\det(\varphi'_w)$ does not vanish at the zeros of $\varphi_w)$,
and their number depends neither on $w$ nor on the choice of
the neighborhood $U_a$.
\end{prop}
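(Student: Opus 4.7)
My plan is to restrict $\varphi$ to a suitable neighborhood of $a$ on which it becomes a proper holomorphic map onto an open ball around the origin, and then invoke Theorem \ref{thm4.1}. First, since $a$ is an isolated zero of $\varphi$, I would shrink $U_a$ if necessary: choose $r>0$ with $\overline{B(a,r)}\subset U_a$ and $\varphi(z)\neq 0$ on $\overline{B(a,r)}\setminus\{a\}$. Set $\delta=\min_{z\in\partial B(a,r)}|\varphi(z)|$, which is strictly positive by continuity and compactness of the sphere, and fix any $\varepsilon\in(0,\delta)$.

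Next I would define $U'=B(a,r)\cap\varphi^{-1}(B(0,\varepsilon))$. Because $|\varphi|\geq\delta>\varepsilon$ on $\partial B(a,r)$, the closure $\overline{U'}$ is a compact subset of $B(a,r)$, and a routine sequential argument shows that $\varphi|_{U'}\colon U'\to B(0,\varepsilon)$ is a proper holomorphic map. To handle the fact that Theorem \ref{thm4.1} requires the source to be connected, observe that for any connected component $C$ of $U'$ the restriction $\varphi|_C$ is again proper between regions, so Theorem \ref{thm4.1}(a) yields $\varphi(C)=B(0,\varepsilon)\ni 0$; since $\varphi$ vanishes in $B(a,r)$ only at $a$, necessarily $a\in C$, which forces $U'$ to be connected.

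Now I can apply Theorem \ref{thm4.1} to the proper holomorphic map $\varphi|_{U'}\colon U'\to B(0,\varepsilon)$ to obtain a multiplicity $m\in\mathbb{N}$. Part (c) then asserts that every regular value $w\in B(0,\varepsilon)\setminus\varphi(M)$ has exactly $m$ preimages in $U'$, each with non-vanishing Jacobian, and therefore each a simple zero of $\varphi_w$. By Remark 4.2 (Sard's theorem), the critical set $\varphi(M)$ has vanishing $2n$-dimensional Lebesgue measure, so the conclusion holds for Lebesgue almost every $w\in B(0,\varepsilon)$.

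Finally, for independence of $m$ from $U_a$ and $\varepsilon$: given a second admissible neighborhood $\tilde U_a$ producing a ball $B(a,\tilde r)$ and threshold $\tilde\delta$, the nested intersection $\bigcap_{\varepsilon>0}U'_\varepsilon$ collapses to $\{a\}$ since $\varphi^{-1}(\{0\})\cap\overline{B(a,r)}=\{a\}$, so by compactness $U'_\varepsilon\subset B(a,\min(r,\tilde r))$ for all sufficiently small $\varepsilon$; the same holds for the analogous set built from $\tilde U_a$, and both then equal the connected component of $B(a,\min(r,\tilde r))\cap\varphi^{-1}(B(0,\varepsilon))$ containing $a$, giving a common multiplicity. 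The main obstacle is precisely this combination of connectedness and shrinkage, which is what turns $m$ into a genuine local invariant of $\varphi$ at $a$; both steps rely crucially on the surjectivity of proper holomorphic maps (Theorem \ref{thm4.1}(a)) to rule out spurious extra components of the preimage.
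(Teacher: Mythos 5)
Your proposal is correct, but note that the paper does not actually prove this proposition: it is quoted from [AY, Chap.\ 1, Prop.\ 2.1], where the argument goes through the multidimensional logarithmic residue (an integral over a cycle that counts the zeros of $\varphi_w$ and depends continuously on $w$), combined with Sard's theorem. You instead derive it from Theorem 4.1 by localizing $\varphi$ until it becomes proper: the choice $\delta=\min_{z\in\partial B(a,r)}|\varphi(z)|>0$, the set $U'=B(a,r)\cap\varphi^{-1}(B(0,\varepsilon))$ with $\overline{U'}\subset B(a,r)$, and in particular the neat observation that every connected component of $U'$ must contain $a$ because proper holomorphic maps between regions are surjective (Theorem 4.1(a)) -- so $U'$ is a region -- are all correct, and Theorem 4.1(c) together with Remark 4.2 then yields exactly $m$ simple zeros for almost every $w\in B(0,\varepsilon)$. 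This is essentially the same localization device the paper itself uses later in the proof of Theorem 4.8 (the compact set $K=\varphi^{-1}(\overline{B(w,\varepsilon)})$), so your route has the merit of making Section 4 self-contained, resting only on the quoted Rudin theorem rather than on the residue calculus of [AY].

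One caveat, which is as much a defect of the statement as transcribed as of your write-up: what you count are the zeros of $\varphi_w$ in $B(a,r)$ (equivalently in $U'$), and you silently identify this with the number of zeros in $U_a$. These need not agree for an arbitrary admissible $U_a$, and the literal statement is in fact false without a further hypothesis: take $n=1$, $\varphi(z)=z(z-1)$, $a=0$, $U_a=B(0,1)$; the second root $(1+\sqrt{1+4w})/2\approx 1+w$ lies inside or outside $U_a$ according to the sign of $\mathrm{Re}\,w$, so the count in $U_a$ is $2$ on one half of $B(0,\varepsilon)$ and $1$ on the other, both of positive measure. The intended hypothesis (as in [AY]) is that $\overline{U_a}$ is a compact subset of $U$ with $\varphi\ne 0$ on $\overline{U_a}\setminus\{a\}$ and that $\varepsilon<\min_{z\in\partial U_a}|\varphi(z)|$; then every zero of $\varphi_w$ in $U_a$ does lie in your $U'$ and your count is the right one. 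You should state this reduction explicitly, and also add the one-line argument that the multiplicity is independent of $\varepsilon$: for $\varepsilon'<\varepsilon$ the two proper maps have identical fibres over every $w\in B(0,\varepsilon')$, so their generic fibre counts coincide; with that, your final paragraph on independence of the neighborhood goes through as written.
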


\begin{defi}
The number of zeros to (4.1) indicated in Prop. 4.3 is called
the multiplicity of the isolated zero $a$ for $\varphi$.
\end{defi}
\medskip

An isolated zero $a$ for $\varphi$ has multiplicity one if and
only if $\det(\varphi'(a))\ne 0$ (cf. [AY, Chap 1, Prop 2.2 and
Prop 2.3]).

\begin{remark}
\emph{The multiplicity defined above is also called the 
\emph{geometric multiplicity} of an isolated zero
(cf [Ts, p. 16]). It coincides with the
\emph{algebraic multiplicity} of $a$:}
\[
\mu_a(\varphi) = \dim(O_a/I_a(\varphi)),
\]
\emph{where $O_a$ is the ring of holomorphic germs at $a$, and
$I_a(\varphi)$ is  the ideal in $O_a$ generated by the $n$
coordinate functions of $\varphi$ (cf. [Ts, p. 148]).}

\emph{We will also use the $n$-dimensional version of Rouch\'es Theorem
(cf. [AY, Thm. 2.5 and remark after Thm. 2.5]).}
\end{remark}

\begin{theorem}
Let $U$, $V$ be regions in $\mathbb{C}^n$ and let $D$ be a
bounded open set, such that $\bar{D}\subseteq U$ and $\partial D$
is piecewise smooth. Let $f,g: U\rightarrow V$ be holomorphic
functions, such that
\[
\forall z\in\partial D\quad \forall t\in [0,1]:\qquad f(z)+tg(z)\ne 0.
\]
Then $f$ and $f+g$ have only isolated zeros in $D$, and the two
functions $f$ and $f+g$ have the same number of zeros in $D$ counted
with multiplicity.
\end{theorem}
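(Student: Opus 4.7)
The plan is to reduce the theorem to the homotopy invariance of an integer-valued quantity. Consider the straight-line homotopy $f_t(z) = f(z) + t g(z)$ for $t \in [0,1]$, so that $f_0 = f$ and $f_1 = f + g$. By hypothesis, $f_t(z) \neq 0$ for every $z \in \partial D$ and every $t \in [0,1]$, so the zero sets of all $f_t$ lie in the interior of $D$.

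First I would establish that each $f_t$ (in particular $f$ and $f+g$) has only isolated zeros in $D$. The zero set $Z_t = f_t^{-1}(0)$ is a closed analytic subvariety of $U$, and by the boundary hypothesis $Z_t \cap \partial D = \emptyset$, so $Z_t \cap \bar{D}$ is a compact analytic subset of $U$. If some $a \in Z_t \cap D$ were non-isolated, the irreducible component of $Z_t$ through $a$ would be a compact analytic subvariety of positive dimension; but then the coordinate functions $z_1,\ldots,z_n$ would be bounded holomorphic functions on it, contradicting the maximum principle for analytic sets. Hence $Z_t \cap D$ is discrete and, by compactness of $\bar{D}$, finite.

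Given this, define for each $t \in [0,1]$,
\[
N(t) = \sum_{a \in Z_t \cap D} \mu_a(f_t),
\]
where $\mu_a$ is the multiplicity from Definition 4.4. The goal then becomes to show that $N(t)$ is continuous in $t$; since it takes non-negative integer values, continuity forces $N$ to be constant on the connected interval $[0,1]$, and $N(0) = N(1)$ is exactly the desired conclusion. Continuity I would obtain via an integral representation: $N(t)$ equals the topological degree of the map $z \mapsto f_t(z)/\|f_t(z)\|_2$ from $\partial D$ to $S^{2n-1}$, and this degree admits the Bochner--Martinelli formula
\[
N(t) = \int_{\partial D} f_t^{\,*}\,\omega_{\mathrm{BM}},
\]
where $\omega_{\mathrm{BM}}$ is the Bochner--Martinelli kernel on $\mathbb{C}^n \setminus \{0\}$. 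Since $f_t$ is nowhere zero on the compact set $\partial D$ and depends continuously on $t$, both the degree and the integral depend continuously on $t$.

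The main obstacle is the identification of $N(t)$, defined through the geometric multiplicity of Proposition 4.3 at each isolated zero, with the Bochner--Martinelli integral over $\partial D$; this is the several-variables analogue of the argument principle and is the one substantive ingredient that is not purely formal. Once this identification (which is the content of [AY, Thm.~2.5] and the remark following it) is available, the remainder of the proof is the elementary observation that a continuous function on the connected interval $[0,1]$ taking integer values must be constant, so $N(0) = N(1)$.
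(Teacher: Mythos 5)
The paper does not actually prove this statement: Theorem 4.6 is quoted from the literature, with the proof deferred to [AY, Thm.~2.5 and the remark following it]. Your proposal is therefore not ``the same as the paper's proof'' in a literal sense, but it is a correct outline of the standard argument that underlies the cited result: homotopy invariance of the zero count along $f_t=f+tg$, with isolatedness obtained from the fact that a positive-dimensional irreducible component of $f_t^{-1}(0)$ meeting $D$ would be a compact analytic subvariety of $\mathbb{C}^n$ (impossible by the maximum principle), and constancy of $N(t)$ obtained from the integer-valuedness and continuity of the Bochner--Martinelli / degree integral over $\partial D$, which is legitimate because $|f_t|$ is bounded away from zero on the compact set $\partial D\times[0,1]$. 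The one substantive ingredient you do not prove --- the multidimensional argument principle identifying the sum of the geometric multiplicities of the zeros in $D$ with the boundary integral (the logarithmic residue formula) --- you name explicitly and source correctly to [AY]; since the paper itself outsources the entire theorem to the same reference, this is an acceptable level of rigor, and arguably more informative than the paper's bare citation. Two minor points: the phrase ``the zero sets of all $f_t$ lie in the interior of $D$'' should read ``the zeros of $f_t$ in $\bar{D}$ lie in $D$'' (nothing prevents zeros elsewhere in $U$), and the logarithmic residue formula is a separate, earlier result in [AY] rather than Thm.~2.5 itself, which is the Rouch\'e statement you are trying to prove.
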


\begin{defi}
Let $U,V$ be regions in $\mathbb{C}^n$, $\varphi: U\rightarrow V$
be a proper holomorphic function, and let $w\in V$. By the number
$m(w)$ of solutions $z\in U$ to $\varphi(z)=w$ counted with
multiplicity, we mean the number of zeros of
$\varphi_w(z)=\varphi(z)-w$ in $U$ connected with multiplicity.
\end{defi}

The following theorem is probably well known but since we have
not found a concrete reference to it in the literature, we
include a proof.

\begin{theorem}\label{t4.8}
Let $U,V$ be regions in $\mathbb{C}^n$ and let
$\varphi: U\rightarrow V$ be a proper holomorphic function of
multiplicity $m$ (as defined in Theorem 4.1(c)). Then for every
$w\in V$, the number $m(w)$ of solutions $z\in U$ to $\varphi(z)=w$
counted with multiplicity is equal to $m$.
\end{theorem}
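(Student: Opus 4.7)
The plan is to verify the conclusion on regular values directly from Theorem \ref{thm4.1}(c), and then extend it to critical values by a local-to-global confinement argument that traps all nearby preimages into small neighborhoods of the critical fiber.

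For any regular value $w \in V \setminus \varphi(M)$, Theorem \ref{thm4.1}(c) gives $|\varphi^{-1}(w)| = m$; since $\det(\varphi'(z)) \neq 0$ at each such preimage $z$, every $z$ is a simple zero of $\varphi_w$ (by the remark after Definition 4.4), and hence $m(w) = m$ immediately. Fix now a critical value $w_0 \in \varphi(M)$. By Theorem \ref{thm4.1}(c) the fiber $F := \varphi^{-1}(w_0)$ is finite, say $F = \{a_1, \dots, a_s\}$, and each $a_i$ is an isolated zero of $\varphi_{w_0}$. I apply Proposition 4.3 at each $a_i$ to obtain pairwise disjoint open neighborhoods $U_{a_i}$ of the $a_i$ and radii $\varepsilon_i > 0$ such that, for Lebesgue almost every $w$ with $\|w - w_0\| < \varepsilon_i$, the function $\varphi_w$ has exactly $\mu_i := \mu_{a_i}(\varphi_{w_0})$ simple zeros in $U_{a_i}$. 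By Definition 4.7, $m(w_0) = \mu_1 + \cdots + \mu_s$.

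The main obstacle is the confinement step: I claim there exists $\delta > 0$ such that $\varphi^{-1}(w) \subseteq U_0 := \bigsqcup_i U_{a_i}$ whenever $\|w - w_0\| < \delta$. Suppose not; then one can choose $w_k \to w_0$ and $z_k \in \varphi^{-1}(w_k) \setminus U_0$. Properness of $\varphi$ applied to the compact set $K := \{w_k : k \geq 1\} \cup \{w_0\} \subset V$ forces the sequence $\{z_k\}$ to lie in the compact set $\varphi^{-1}(K) \subseteq U$, so some subsequence converges to a point $z_\infty \in U$ with $\varphi(z_\infty) = w_0$; hence $z_\infty = a_j$ for some $j$. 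But then $z_k$ eventually lies in the neighborhood $U_{a_j} \subseteq U_0$, contradicting the choice of $z_k$.

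To finish, I would select $w$ in the ball of radius $\min(\delta, \varepsilon_1, \dots, \varepsilon_s)$ about $w_0$ that is simultaneously a regular value of $\varphi$ (possible since the regular values form a dense open subset of full Lebesgue measure by Theorem \ref{thm4.1}(b) and Remark 4.2) and that also lies in the full-measure set granted by Proposition 4.3 at each $a_i$ (a finite intersection of conull sets). For such $w$, confinement gives $\varphi^{-1}(w) \subseteq U_0$ consisting of $\sum_i \mu_i$ simple zeros, one block in each $U_{a_i}$; but regularity gives $|\varphi^{-1}(w)| = m$. Therefore $m = \sum_i \mu_i = m(w_0)$, as required.
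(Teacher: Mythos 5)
Your proposal is correct, but it follows a genuinely different route from the paper. The paper fixes $w$, takes $K=\varphi^{-1}(\overline{B(w,\varepsilon)})$ (compact by properness), and applies the $n$-dimensional Rouch\'e theorem (Theorem 4.6) on $\partial K$ to $f=\varphi_w$ and the constant perturbation $g=w-v$, concluding that $v\mapsto m(v)$ is constant on $B(w,\varepsilon)$; it then picks a regular value $v$ in that ball using only the \emph{density} of $V\setminus\varphi(M)$ from Theorem 4.1(b). (It must also patch a technical point: $\partial K$ need not be piecewise smooth, so $K$ is replaced by a larger polyhedral compact set.) You instead bypass Rouch\'e entirely: you unwind Definition 4.7 into the sum $\sum_i\mu_{a_i}(\varphi_{w_0})$ over the finite fiber, invoke Proposition 4.3 at each fiber point, and add a confinement lemma --- proved by a clean sequential-compactness argument from properness --- guaranteeing that all preimages of nearby values fall into the chosen disjoint neighborhoods $U_{a_i}$. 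Your route avoids the boundary-smoothness issue altogether, which is a genuine simplification; the price is that you need the regular values to be \emph{conull} (Remark 4.2, i.e.\ Sard) rather than merely dense, so that they meet the finite intersection of the conull sets supplied by Proposition 4.3 inside a small ball. (A minor streamlining: once $w$ lies in those conull sets, every zero of $\varphi_w$ is simple by confinement, so $w$ is automatically a regular value and the explicit appeal to regularity could be dropped.) Both arguments are complete and correct.
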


\begin{proof}
Let $\varphi(M)$ denote the set of critical values for $\varphi$
as in Theorem 4.1. For $w\in V$ we put
\[
\varphi_w(z)=\varphi(z)-w,\quad z\in U.
\]
Note that the Jacobian $J_w(z)=\det(\varphi'_w(z))$ is equal to
the Jacobian of $\varphi$. Assume first, that
$w\in V\setminus\varphi(M)$. Then the Jacobian of $\varphi_w$
is non-zero at all the zeros of $\varphi_w$ and hence all
the zeros have multiplicity 1. Hence by Theorem 4.1,
\[
m(w)=|\varphi^{-1}(w)|=m,\quad w\in V\setminus\varphi(M).
\]
Let now $w\in V$ be arbitrary. Choose an $\varepsilon > 0$
such that $\overline{B(w,\varepsilon)}$ is contained in $V$.
By the properness of $\varphi$,
\[
K=\varphi^{-1}(\overline{B(w,\varepsilon)})
\]
is a compact subset of $U$. Moreover
\begin{equation}\label{4.7}
|\varphi(z)-w| >\varepsilon\quad\textrm{for}\quad z\in U\setminus K
\end{equation}
and since $\partial K\subseteq \overline{U\setminus K}$, we have
\begin{equation}\label{4.8}
|\varphi(z)-w|\ge \varepsilon\quad\textrm{for}\quad z\in \partial K.
\end{equation}
Let $v\in B(w,\varepsilon)$. Then
$\varphi_v=\varphi_w+c$ where $c=w-v\in\mathbb{C}^n$, and 
by (\ref{4.8})
\[
|c|<\varepsilon\le |\varphi_w(z)|,\quad z\in\partial K.
\]
Assume first that the boundary $\partial K$ of $K$ is piecewise
smooth. Then we can apply Theorem 4.6 to $f=\varphi_w$ and
$g=c$, and obtain, that $\varphi_w$ and $\varphi_v$ have the
same number of zeros (counted with multiplicity) in
$\overset{\circ}{K}=K\setminus\partial K$. By (\ref{4.7}) and
(\ref{4.8}), neither $\varphi_w$ nor $\varphi_v=\varphi_w+c$,
$|c|<\varepsilon$ has zeros in $\partial K$ or $U\setminus K$. Hence
\[
m(v) = m(w),\quad v\in B(w,\varepsilon).
\]
Since $V\setminus\varphi(M)$ is dense in $V$ by Theorem 4.1, we
can choose a $v\in B(w,\varepsilon)\setminus\varphi(M)$ and for
this $v$, $m(w)=m(v)=m$ by the first part of the proof.

If $\partial K$ is not piecewise smooth, one can find a compact set $K'$ with piecewise smooth boundary, such that $K\subseteq K'\subseteq U$, for instance $K'$ can be a polyhedron or a finite
union of disjoint polyhedrons. Then the proof of $m(w)=m$ can
be completed as above by using $K'$ instead of $K$.
\end{proof}

\section{The number of cyclic $p$-roots on $(x,y)$-level}
Throughout this section $p$ is a prime number. We will show,
that for $n=p$, the numbers of solutions to (\ref{2.4}) and
(\ref{2.9}) counted with multiplicity are both equal to
$\left(\begin{smallmatrix}2p-2\\p-1\end{smallmatrix}\right)$.
In both cases we will consider $x_0,y_0$ as the fixed numbers
$x_0=y_0=1$, so the problems (\ref{2.4}) and (\ref{2.9}) have
$2p-2$ variables: $x_1,\ldots,x_{p-1},y_1,\ldots,y_{p-1}$.

\begin{lemma}\label{l5.1}
Let $x',y'\in\mathbb{C}^{p-1}$, $x'=(x_1,\ldots,x_{p-1})$,
$y'=(y_1,\ldots,y_{p-1})$, put
\[
x=(1,x_1,\ldots,x_{p-1}),\qquad y=(1,y_1,\ldots,y_{p-1}),
\]
and let $\hat{x}=F_p x$, $\hat{y}=F_p y$
be their Fourier transformed vectors in $\mathbb{C}^p$.
Consider the function
$\varphi:\mathbb{C}^{2p-2}\rightarrow\mathbb{C}^{2p-2}$ given
by the coordinate functions
\begin{align}
\varphi_j(x',y') &= x_jy_j,\quad 1\le j\le p-1\label{5.1}\\
\varphi_{p-1+j}(x',y') &=
\hat{x}_j\hat{y}_{-j},\quad 1\le j\le p-1\label{5.2}.
\end{align}
Then $\varphi$ is a proper holomorphic function.
\end{lemma}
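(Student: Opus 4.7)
The plan is to verify properness by the standard sequential criterion: a continuous map $\varphi\!:\mathbb{C}^{2p-2}\to\mathbb{C}^{2p-2}$ is proper if and only if $\|\varphi(x',y')\|\to\infty$ whenever $\|(x',y')\|\to\infty$. I would argue by contradiction: suppose there exists a sequence $(x'^{(m)},y'^{(m)})$ in $\mathbb{C}^{2p-2}$ with $\|(x'^{(m)},y'^{(m)})\|\to\infty$ while $\varphi(x'^{(m)},y'^{(m)})$ remains bounded, and derive a non-trivial pair $(u,v)$ that violates Proposition \ref{p3.3}, exactly as in the proof of Theorem \ref{thm3.5}.

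Concretely, set $x^{(m)}=(1,x'^{(m)})$, $y^{(m)}=(1,y'^{(m)})\in\mathbb{C}^p$, so that $\|x^{(m)}\|_2\ge 1$ and $\|y^{(m)}\|_2\ge 1$. Exactly the computation carried out between formulas (\ref{3.3}) and (\ref{3.4}) in Lemma \ref{l3.4} (using $x_0^{(m)}=y_0^{(m)}=1$) shows that $\|x^{(m)}\|_2\,\|y^{(m)}\|_2\to\infty$. Normalising
\[
u^{(m)}=\tfrac{1}{\|x^{(m)}\|_2}x^{(m)},\qquad v^{(m)}=\tfrac{1}{\|y^{(m)}\|_2}y^{(m)},
\]
and passing to a subsequence, I can assume $u^{(m)}\to u$ and $v^{(m)}\to v$ on the unit sphere of $\mathbb{C}^p$, so $u,v\in\mathbb{C}^p\setminus\{0\}$. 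Since the Fourier transform is a linear isometry, $\hat u^{(m)}\to\hat u$ and $\hat v^{(m)}\to\hat v$.

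Now I use the boundedness of $\varphi(x'^{(m)},y'^{(m)})$. For each $1\le j\le p-1$,
\[
u^{(m)}_j v^{(m)}_j=\frac{\varphi_j(x'^{(m)},y'^{(m)})}{\|x^{(m)}\|_2\,\|y^{(m)}\|_2},\qquad \hat u^{(m)}_j \hat v^{(m)}_{-j}=\frac{\varphi_{p-1+j}(x'^{(m)},y'^{(m)})}{\|x^{(m)}\|_2\,\|y^{(m)}\|_2},
\]
and both tend to $0$ because the numerators are bounded and the denominator tends to infinity. Also $u^{(m)}_0 v^{(m)}_0=1/(\|x^{(m)}\|_2\|y^{(m)}\|_2)\to 0$. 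Passing to the limit gives $u_kv_k=0$ for all $0\le k\le p-1$ and $\hat u_j\hat v_{-j}=0$ for $1\le j\le p-1$. Finally, identity (\ref{2.8}) yields
\[
\hat u_0\hat v_0=\sum_{m=0}^{p-1}u_mv_m-\sum_{j=1}^{p-1}\hat u_j\hat v_{-j}=0,
\]
so $\hat u_k\hat v_{-k}=0$ for every $k$. This places $(u,v)$ in exactly the situation treated in the proof of Theorem \ref{thm3.5}: $\supp(u)\cap\supp(v)=\emptyset$ and $\supp(\hat u)\cap(-\supp(\hat v))=\emptyset$, whence
\[
|\supp(u)|+|\supp(\hat u)|+|\supp(v)|+|\supp(\hat v)|\le 2p,
\]
contradicting the lower bound $2(p+1)$ furnished by Proposition \ref{p3.3} (since $u,v\ne 0$). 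The main step (and the only non-routine one) is the verification that $\|x^{(m)}\|_2\,\|y^{(m)}\|_2\to\infty$, so that the $\varphi$-coordinates, even though not assumed to equal $1$, still vanish after rescaling; after that, everything reduces to the Chebotar\"ev argument already established in Section 3.
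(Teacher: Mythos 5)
Your proposal is correct and follows essentially the same route as the paper: assume a sequence with $\|(x'^{(m)},y'^{(m)})\|\to\infty$ but $\varphi$ bounded, show $\|x^{(m)}\|_2\|y^{(m)}\|_2\to\infty$, normalise, pass to a convergent subsequence, and derive disjoint supports for $(u,v)$ and $(\hat u,\hat v)$ contradicting Proposition \ref{p3.3}. The only cosmetic difference is that you phrase properness via the sequential criterion while the paper phrases it via unboundedness of $\varphi^{-1}(\overline{B}(0,R))$; these are equivalent here.
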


\begin{proof}
Clearly $\varphi$ is a holomorphic function of $\mathbb{C}^{2p-2}$
into $\mathbb{C}^{2p-2}$. For $R>0$, we put
\[
\overline{B}(0,R)=\{w\in\mathbb{C}^{2p-2}\ |\ \|w\|_2\le R\}.
\]
Assume that $\varphi$ is not proper. Then for some $R>0$,
$\varphi^{-1}(\overline{B}(0,R))$ is not a bounded subset of
$\mathbb{C}^{2p-2}$. Hence there exists a sequence
$(z^{(m)})_{m=1}^\infty$ in $\mathbb{C}^{2p-2}$ such that
\[
\lim_{m\rightarrow\infty}\| z^{(m)}\|_2=\infty
\]
while
\begin{equation}\label{5.3}
\|\varphi(z^{(m)})\|_2\le R,\quad m\in\mathbb{N}.
\end{equation}
Write $z^{(m)}=(x_1^{(m)},\ldots,x_{p-1}^{(m)},y_1^{(m)},
\ldots,y_{p-1}^{(m)})$ and put 
\[
x^{(m)}=(1,x_1^{(m)},
\ldots,x_{p-1}^{(m)}),\ \ y^{(m)}=(1,y_1^{(m)},
\ldots,y_{p-1}^{(m)}).
\]
Then
\[
\|x^{(m)}\|_2^2\|y^{(m)}\|_2^2 =
\bigg(1+\sum_{j=1}^{p-1}|x_j^{(m)}|^2\bigg)
\bigg(1+\sum_{j=1}^{p-1}|y_j^{(m)}|^2\bigg) \ge 1+\|z^{(m)}\|_2^2.
\]
Hence
\begin{equation}\label{5.4}
\lim_{n\rightarrow\infty} \|x^{(m)}\|_2\|y^{(m)}\|_2=\infty.
\end{equation}
The rest of the proof will follow the proof of Lemma 3.4 and 
Theorem 3.5. By passing to a subsequence, we can obtain, that
the sequences
\[
u^{(m)}=\frac{1}{\|x^{(m)}\|_2} x^{(m)},\quad
v^{(m)}=\frac{1}{\|y^{(m)}\|_2} y^{(m)}
\]
both converge in the unit sphere $S^{2p-1}$ of $\mathbb{C}^p$. Put
\[
u=\lim_{m\rightarrow\infty}u^{(m)},
\quad v=\lim_{m\rightarrow\infty}v^{(m)}.
\]
By (\ref{5.1}), (\ref{5.2}) and (\ref{5.3}),
\[
|x_j^{(m)}y_j^{(m)}|\le R\quad\textrm{and}\quad 
|\widehat{x_j^{(m)}}\widehat{y_{-j}^{(m)}}|\le R
\]
for $1\le j\le p-1$. Hence by (\ref{5.4})
\[
u_jv_j=\lim_{m\rightarrow\infty}u_j^{(m)}v_j^{(m)}=0,\quad 1\le j\le p-1
\]
and
\[
\hat{u}_j\hat{v}_{-j} =
\lim_{m\rightarrow\infty}\widehat{x_j^{(m)}}\widehat{y_{-j}^{(m)}}=0,
\quad 1\le j\le p-1.
\]
Moreover, since $x_0^{(m)}=y_0^{(m)}=1$, we also have
$u_0v_0=0$ and hence by (\ref{2.8}) also
$\hat{u}_0\hat{v}_0=0$. We have thus proved that
\[
\supp(u) \cap \supp(v) = \emptyset \qquad\textrm{and}\qquad
\supp(\hat{u}) \cap (-\supp(\hat{v}) = \emptyset.
\]
However $u,v$ are non-zero, because $\|u\|_2=\|v\|_2=1$, so as 
in the proof of Theorem 3.5, this contradicts Proposition 3.3.
Therefore $\varphi:\mathbb{C}^{2p-2}\rightarrow\mathbb{C}^{2p-2}$
is a proper holomorphic function.
\end{proof}

\begin{lemma}\label{l5.2}
Let $\varphi: \mathbb{C}^{2p-2}\rightarrow\mathbb{C}^{2p-2}$ be
the proper holomorphic function defined in lemma 5.1. Put
\[
\mathbb{Z}_p^* = \mathbb{Z}_p\setminus\{0\} = \{1,2,\ldots,p-1\}.
\]
\begin{itemize}
\item[(i)] Assume $z=(x_1,\ldots,x_{p-1},y_1,\ldots,y_{p-1})$ is
a solution to $\varphi(z)=0$, and put
\[
x=(1,x_1,\ldots,x_{p-1}),\qquad y=(1,y_1,\ldots,y_{p-1}).
\]
Then there is a unique pair $(K,L)$
of subsets $K,L\subseteq\mathbb{Z}_p^*$ satisfying $|K|+|L|=p-1$,
such that
\begin{align}
\supp(x)=L\cup\{0\} &,\quad \supp(\hat{x})=K\cup\{0\}\label{5.5}\\
\supp(y)=\mathbb{Z}_p\setminus L &,\quad -\supp(\hat{y})=
\mathbb{Z}_p\setminus K\label{5.6}.
\end{align}
\item[(ii)] Conversely if $K,L\subseteq\mathbb{Z}_p^*$ satisfy
$|K|+|L|=p-1$, then there exists exactly one solution
$(x,y)\in\mathbb{C}^p\times\mathbb{C}^p$ to (\ref{5.5}) and
(\ref{5.6}) of the form $x=(1,x_1,\ldots,x_{p-1})$,
$y=(1,y_1,\ldots,y_{p-1})$ and for this solution,
$z=(x_1,\ldots,x_{p-1},y_1,\ldots,y_{p-1})\in\mathbb{C}^{2p-2}$
satisfies $\varphi(z)=0$.
\item[(iii)] The number of distinct zeros for $\varphi$ is equal
to $\left(\begin{smallmatrix}2p-2\\ p-1\end{smallmatrix}\right)$.
\end{itemize}
\end{lemma}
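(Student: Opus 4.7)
The plan is to deduce $(i)$ from a support-counting argument combining the hypothesis $\varphi(z)=0$ with the Chebotar\"ev-type bound of Proposition 3.3, to prove $(ii)$ by setting up a linear system for the free coordinates of $x$ (and symmetrically for $y$) whose coefficient matrix is a Chebotar\"ev minor and hence invertible by Theorem 3.2, and to conclude $(iii)$ by a Vandermonde count.

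For $(i)$, write $x=(1,x_1,\ldots,x_{p-1})$, $y=(1,y_1,\ldots,y_{p-1})$. The relations $\varphi(z)=0$ give $x_jy_j=0$ and $\hat{x}_j\hat{y}_{-j}=0$ for $1\le j\le p-1$, hence $\supp(x)\cap \supp(y)\subseteq\{0\}$ and $\supp(\hat{x})\cap(-\supp(\hat{y}))\subseteq\{0\}$. Since $x_0=y_0=1\ne 0$ the first intersection is exactly $\{0\}$, so $|\supp(x)|+|\supp(y)|\le p+1$; the second disjointness (together with the elementary estimate for two subsets of $\mathbb{Z}_p$ whose intersection lies in $\{0\}$) gives $|\supp(\hat{x})|+|\supp(\hat{y})|\le p+1$. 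On the other hand, since $x,y$ are non-zero, Proposition 3.3 yields $|\supp(x)|+|\supp(\hat{x})|\ge p+1$ and $|\supp(y)|+|\supp(\hat{y})|\ge p+1$. Summing both pairs gives $2p+2$ on each side, so all four inequalities are equalities. In particular $\hat{x}_0\ne 0$ and $\hat{y}_0\ne 0$, $\supp(x)\cup\supp(y)=\mathbb{Z}_p$, and $\supp(\hat{x})\cup(-\supp(\hat{y}))=\mathbb{Z}_p$. Setting
\[
L:=\supp(x)\cap\mathbb{Z}_p^*,\qquad K:=\supp(\hat{x})\cap\mathbb{Z}_p^*,
\]
one has $|K|+|L|=p-1$ and the identities (\ref{5.5})--(\ref{5.6}); uniqueness of $(K,L)$ is immediate because the pair is determined by the supports of $x$ and $\hat{x}$.

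For $(ii)$, fix $K,L\subseteq\mathbb{Z}_p^*$ with $|K|+|L|=p-1$. The requirement $\supp(x)\subseteq L\cup\{0\}$ together with $x_0=1$ reduces the unknowns to the $|L|$ entries $(x_j)_{j\in L}$, and the requirement $\hat{x}_k=0$ for $k\in\mathbb{Z}_p^*\setminus K$ supplies exactly $p-1-|K|=|L|$ linear equations
\[
\sum_{j\in L}\e^{\I 2\pi jk/p}\,x_j=-1,\qquad k\in\mathbb{Z}_p^*\setminus K.
\]
The square coefficient matrix $\bigl(\e^{\I 2\pi jk/p}\bigr)_{k\in\mathbb{Z}_p^*\setminus K,\,j\in L}$ is of the type covered by Chebotar\"ev's Theorem 3.2 and therefore invertible, yielding a unique $x$. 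The vector $y$ is constructed analogously: the condition $-\supp(\hat{y})=\mathbb{Z}_p\setminus K$ translates into $\hat{y}_{-k}=0$ precisely for $k\in K$, a $|K|\times|K|$ Chebotar\"ev system for the $|K|$ free coordinates $(y_j)_{j\in\mathbb{Z}_p^*\setminus L}$, again uniquely solvable. It remains to verify that the supports of $x$ and $\hat{x}$ are \emph{exactly} $L\cup\{0\}$ and $K\cup\{0\}$ (and similarly for $y$): if either inclusion were strict, then $|\supp(x)|+|\supp(\hat{x})|\le p$, violating Proposition 3.3 applied to the non-zero vector $x$. With the supports exact, $\varphi(z)=0$ follows immediately from the prescribed disjointnesses of $\supp(x),\supp(y)$ and of $\supp(\hat{x}),-\supp(\hat{y})$ in $\mathbb{Z}_p^*$.

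Parts $(i)$ and $(ii)$ together set up a bijection between zeros of $\varphi$ and pairs $(K,L)$ of subsets of $\mathbb{Z}_p^*$ with $|K|+|L|=p-1$. The number of such pairs is, by the Vandermonde identity,
\[
\sum_{k=0}^{p-1}\binom{p-1}{k}\binom{p-1}{p-1-k}=\binom{2p-2}{p-1},
\]
which is $(iii)$. The main obstacle is the support-exactness step in $(ii)$: one must rule out that the Chebotar\"ev-invertible linear system produces a vector whose support is strictly smaller than prescribed, and this is precisely where Proposition 3.3 has to be invoked a second time, in the opposite direction from its use in $(i)$.
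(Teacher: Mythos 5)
Your argument is correct and follows essentially the same route as the paper: Proposition 3.3 forces the four support inequalities coming from the two disjointness relations to be equalities in (i), Chebotar\"ev invertibility of the square minors $(F_p)_{K'\times L}$ and $(F_p)_{K\times L'}$ gives existence and uniqueness in (ii) with Proposition 3.3 invoked a second time to upgrade the support inclusions to equalities, and the Vandermonde identity gives (iii). The only detail the paper treats separately that you gloss over is the degenerate case $|K|=0$ or $|L|=0$, where Theorem 3.2 (stated for $|K|=|L|\ge 1$) does not literally apply and one checks directly that $x=(1,0,\ldots,0)$, $y=(1,1,\ldots,1)$ (or vice versa) is the unique solution.
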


\begin{proof}
(i): Assume that $\varphi(z)=0$ for
$z=(x_1,\ldots,x_{p-1},y_1,\ldots,y_{p-1})\in\mathbb{C}^{2p-2}$,
and define $x,y\in\mathbb{C}^p$ as in (i). Then by the definition
of $\varphi$,
\begin{equation}\label{5.7}
x_jy_j=0,\quad \hat{x}_j\hat{y}_{-j}=0,\quad\textrm{for}
\quad 1\le j\le p-1.
\end{equation}
Moreover $x_0y_0=1$, so by (\ref{5.7}) and (\ref{2.8}) also
$\hat{x}_0\hat{y}_0=1$. Therefore
\begin{align*}
\supp(x)\cap\supp(y) &= \{0\}\\
\supp(\hat{x})\cap(-\supp(\hat{y})) &= \{0\}.
\end{align*}
Hence, there are unique subsets $K$, $K'$, $L$, $L'$ of
$\mathbb{Z}_p^*$ such that
\begin{align}
\supp(x) = L\cup\{0\}&, \quad \supp(\hat{x})=K\cup\{0\}\label{5.8}\\
\supp(y) = L'\cup\{0\}&,  \quad -\supp(\hat{y})=K'\cup\{0\}.\label{5.9}
\end{align}
Moreover $K\cap K'=\emptyset$ and $L\cap L'=\emptyset$. In particular
\begin{equation}\label{5.10}
|K|+|K'|\le p-1\quad\textrm{and}\quad |L|+|L'|\le p-1.
\end{equation}
By Proposition 3.3
\begin{align}
|K|+|L| &=|\supp(x)|+|\supp(\hat{x})|-2 \ge p-1\label{5.11}\\
|K'| + |L'| &= |\supp(y)|+|\supp(\hat{y})|-2\ge p-1.\label{5.12}
\end{align}
Hence, equality must hold in the 4 inequalities in (\ref{5.10}),
(\ref{5.11}) and (\ref{5.12}). In particular $|K|+|L|=p-1$ and 
$K'=\mathbb{Z}_p^*\setminus K$, $L'=\mathbb{Z}_p^*\setminus L$.
This proves (\ref{5.5}) and (\ref{5.6}), and the uniqueness of
$K$ and $L$ is clear.

(ii): Let $K,L\subseteq Z_p^*$ be such that $|K|+|L|=p-1$.
Put $K'=\mathbb{Z}_p^*\setminus K$,
$L'=\mathbb{Z}_p^*\setminus L$. Then (\ref{5.6}) can be written as
\begin{equation}\label{5.13}
\supp(y)=L'\cup\{0\},\quad -\supp(\hat{y})=K'\cup\{0\}.
\end{equation}
Moreover
\begin{equation}\label{5.14}
|K'|=|L|, |L'|=|K|.
\end{equation}
Assume first that $|K|\ge 1$ and $|L|\ge 1$. Then by Chebotar\"ev's
Theorem (Theorem 2.1), the submatrices $(F_p)_{K'\times L}$ and
$(F_p)_{K\times L'}$ of
\[
F_p=(\frac{1}{\sqrt{p}}\e^{\I 2\pi kl/p})_{j,k=0,\ldots,p-1}
\]
have non-zero determinants. We claim that (\ref{5.5}) and (\ref{5.6})
have a unique solution $(x,y)$ of the form
$x=(1,x_1,\ldots,x_{p-1})$, $y=(1,y_1,\ldots,y_{p-1})$ and that
this solution is given by
\begin{equation}\label{5.15}
\left\{\begin{array}{rcl}
\dps (x_l)_{l\in L} &=& \dps -\frac{1}{\sqrt{p}}
\big[(F_p)_{K'\times L}\big]^{-1}(1)_{k\in K'}\\
\dps x_l &=& 0\ \textrm{for}\ l\in L'
\end{array}\right.
\end{equation}
and
\begin{equation}\label{5.16}
\left\{\begin{array}{rcl}
\dps (y_l)_{l\in L'} &=& \dps -\frac{1}{\sqrt{p}}
\big[(\overline{F}_p)_{K\times L'}\big]^{-1}(1)_{k\in K}\\
\dps y_l &=& 0\ \textrm{for}\ l\in L
\end{array}\right.
\end{equation}
where $(1)_{k\in K}$ (resp. $(1)_{k\in K'}$) is the column 
vector with coordinates indexed by $K$ (resp. $K'$) and all 
entries equal to 1. Moreover $\overline{F}_p$ is the complex
conjugate of $F_p$.

To prove this claim, observe first that (\ref{5.5}) is equivalent to
\begin{equation}\label{5.17}
\supp(x)\subseteq L\cup\{0\},\quad \supp(\hat{x})\subseteq K\cup\{0\}
\end{equation}
because if one of the inclusions in (\ref{5.17}) is proper, then
\[
|\supp(x)|+|\supp(\hat{x})| < |K|+|L|+2 = p+1
\]
which contradicts Proposition 3.3. Moreover
$x=(1,x_1,\ldots,x_{p-1})$ satisfies (\ref{5.17}) if and only
if $x_l=0$ for $l\in L'$ and
\[
\frac{1}{\sqrt{p}}+\frac{1}{\sqrt{p}}\sum_{l\in L}\e^{\I 2\pi kl/p}x_l =
0,\quad k\in K'.
\]
The latter formula can be rewritten as
\[
(F_p)_{K'\times L}(x_l)_{l\in L} = -\frac{1}{\sqrt{p}}(1)_{k\in K'}
\]
which is equivalent to (\ref{5.15}). Similarly one gets that for
$y=(1,y_1,\ldots,y_{p-1})$, (\ref{5.6}) is equivalent to
\[
\supp(y) \subseteq L'\cup\{0\},\quad -\supp(\hat{y})\subseteq K'\cup\{0\}
\]
which is equivalent to $y_l=0$ for $l\in L$ and
\[
\frac{1}{\sqrt{p}}+\frac{1}{\sqrt{p}}\sum_{l\in L'}\e^{-\I 2\pi kl}y_l=0,\quad k\in K,
\]
and this is equivalent to (\ref{5.16}). Finally if $|K|=0$, then
$K=L'=\emptyset$ and $K'=L=\mathbb{Z}_p^*$. In this case, it is
elementary to check that the pair $x=(1,1,\ldots 1)$,
$y=(1,0,\ldots, 0)$ is the unique solution to (\ref{5.5}) and
(\ref{5.6}). Similarly, if $|L|=0$, the pair $x=(1,0,\ldots,0)$,
$y=(1,1,\ldots,1)$ is the unique solution to (\ref{5.5}) and
(\ref{5.6}).

Note finally, that if $(x,y)$ is a solution to (\ref{5.5}) and 
(\ref{5.6}) of the form $x=(1,x_1,\ldots,x_{p-1})$,
$y=(1,y_1,\ldots,y_{p-1})$, then
$z=(x_1,\ldots,x_{p-1},y_1,\ldots,y_{p-1})$ is a zero for
$\varphi$, because $\supp(x)\cap\supp(y)=\{0\}$ and
$\supp(\hat{x})\cap(-\supp(\hat{y}))=\{0\}$. This proves (ii).

(iii): By (i) and (ii) there is a one-to-one correspondence between
the zeros of $\varphi$ and pairs $(K,L)$ of subsets
$\mathbb{Z}_p^*$ satisfying $|K|+|L|=p-1$. The number of such
pairs is
\[
\sum_{j=0}^{p-1}\binom{p-1}{j}\binom{p-1}{p-1-j}=
\binom{2p-2}{p-1},
\]
which proves (iii).
\end{proof}

\begin{theorem}\label{t5.3}
The map $\varphi: \mathbb{C}^{2p-2}\rightarrow \mathbb{C}^{2p-2}$ 
defined in lemma 5.1 is a proper holomorphic function of multiplicity
$\left(\begin{smallmatrix}2p-2\\ p-1\end{smallmatrix}\right)$.
In particular the number of solutions
$(x_1,\ldots,x_{p-1},y_1,\ldots,y_{p-1})$ to (\ref{2.9})
counted with multiplicity is equal to
$\left(\begin{smallmatrix}2p-2\\ p-1\end{smallmatrix}\right)$.
\end{theorem}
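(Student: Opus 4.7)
The plan is to apply Theorem \ref{t4.8} to the proper holomorphic map $\varphi\colon\mathbb{C}^{2p-2}\to\mathbb{C}^{2p-2}$ of Lemma \ref{l5.1}, exploiting the freedom to compute the multiplicity $m$ of $\varphi$ at any convenient value $w$. The natural choice is $w=0$: by Lemma \ref{l5.2}(iii), the fiber $\varphi^{-1}(0)$ consists of exactly $\binom{2p-2}{p-1}$ distinct points, indexed by the pairs $(K,L)$ of subsets of $\mathbb{Z}_p^*$ with $|K|+|L|=p-1$. If each of these zeros has multiplicity one, then Theorem \ref{t4.8} gives $m=\binom{2p-2}{p-1}$, and applying Theorem \ref{t4.8} once more with $w=(1,\ldots,1)$ yields the second assertion, since (\ref{2.9}) is precisely the equation $\varphi(z)=(1,\ldots,1)$.

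The heart of the argument is therefore to verify that the Jacobian $\det(\varphi'(z))$ is non-zero at every zero $z$ of $\varphi$. Fix such a zero with support data $(K,L)$ as in Lemma \ref{l5.2}, so that for each $j\in\mathbb{Z}_p^*$ exactly one of $x_j,y_j$ is non-zero ($x_j\neq 0$ iff $j\in L$) and exactly one of $\hat{x}_j,\hat{y}_{-j}$ is non-zero ($\hat{x}_j\neq 0$ iff $j\in K$). I would linearize the defining equations: differentiating $x_jy_j=0$ gives $x_j\,\delta y_j+y_j\,\delta x_j=0$, which forces $\delta y_j=0$ for $j\in L$ and $\delta x_j=0$ for $j\in L':=\mathbb{Z}_p^*\setminus L$. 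Likewise, differentiating $\hat{x}_j\hat{y}_{-j}=0$ forces $\widehat{\delta y}_{-j}=0$ for $j\in K$ and $\widehat{\delta x}_j=0$ for $j\in K':=\mathbb{Z}_p^*\setminus K$. Here $\delta x,\delta y$ are regarded as vectors in $\mathbb{C}^p$ with $\delta x_0=\delta y_0=0$ imposed by the normalization $x_0=y_0=1$.

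It remains to show that these linear constraints force $\delta x=\delta y=0$. By the symmetry between $x$ and $y$ it suffices to treat $\delta x$. The first batch of conditions yields $\supp(\delta x)\subseteq L$, while the second batch reads
\[
\sum_{l\in L}\e^{\I 2\pi jl/p}\,\delta x_l=0,\qquad j\in K'.
\]
Since $|K'|=p-1-|K|=|L|$, this is a square linear system whose coefficient matrix coincides (up to the factor $1/\sqrt p$) with the Chebotar\"ev submatrix $(F_p)_{K'\times L}$, hence non-singular by Theorem \ref{thm3.2}; consequently $\delta x_l=0$ for all $l\in L$, i.e.\ $\delta x=0$, and by symmetry $\delta y=0$. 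The only real obstacle is the bookkeeping around the Fourier transform — in particular keeping track of the sign convention in $\hat{y}_{-j}$ so that the conditions on $\widehat{\delta x}$ and $\widehat{\delta y}$ line up with the Chebotar\"ev submatrices indexed by $K'\times L$ and $K\times L'$ respectively — but once this is organized, the non-vanishing of the Jacobian, and hence the theorem, follows at once.
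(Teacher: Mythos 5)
Your proposal is correct and follows essentially the same route as the paper: reduce to the fiber over $w=0$ via Theorem \ref{t4.8}, count the distinct zeros by Lemma \ref{l5.2}, and show each has non-vanishing Jacobian by analysing the supports of the linearized increments $\delta x,\delta y$. The only cosmetic difference is that you invoke Chebotar\"ev's theorem directly on the square submatrices $(F_p)_{K'\times L}$ and $(\overline{F}_p)_{K\times L'}$, whereas the paper applies its corollary, the uncertainty principle of Proposition \ref{p3.3}, to the bounds $|\supp(\delta x)|+|\supp(\widehat{\delta x})|\le p$ and the analogous one for $\delta y$; the two arguments are interchangeable.
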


\begin{proof}
By Theorem \ref{t4.8} it is sufficient to prove that for some
$w\in\mathbb{C}$, the number of solutions to $\varphi(z)=w$
counted with multiplicity is equal to
$\left(\begin{smallmatrix}2p-2\\ p-1\end{smallmatrix}\right)$.
Put now $w=0$. From lemma \ref{l5.2} we know that $\varphi$ 
has exactly $\left(\begin{smallmatrix}2p-2\\ p-1\end{smallmatrix}\right)$
distinct zeros. Hence we just have to show, that all the zeros
have multiplicity 1, or equivalently the Jacobian
$J(z)=\mathrm{det}(\varphi'(z))$ is non-zero whenever $\varphi(z)=0$.

Let $z=(x_1,\ldots,x_{p-1},y_1,\ldots,y_{p-1})$ be a zero for
$\varphi$, put $x=(1,x_1,\ldots,x_{p-1})$, $y=(1,y_1,\ldots,y_{p-1})$
and let $K,L\subseteq\mathbb{Z}_p^*$ be the corresponding sets as
in lemma \ref{l5.2}. Then $|K|+|L|=p-1$, and with
$K'=\mathbb{Z}_p^*\setminus K$, $L'=\mathbb{Z}_p^*\setminus L$,
(\ref{5.5}) and (\ref{5.6}) can be written
\begin{eqnarray}
\mathrm{supp}(x)= L\cup\{0\}, & \mathrm{supp}(\hat{x})=
K\cup\{0\}\label{5.18}\\
\mathrm{supp}(y)= L'\cup\{0\}, & -\mathrm{supp}(\hat{y})=
K'\cup\{0\}.\label{5.19}
\end{eqnarray}
In order to determine $\varphi'(z)$ we compute $\varphi(z+h)$ 
for $h=(f_1,\ldots,f_{p-1},g_1,\ldots,g_{p-1})\in\mathbb{C}^{2p-2}$.
Put
\[
f=(0,f_1,\ldots,f_{p-1}),\quad g=(0,g_1,\ldots,g_{p-1}).
\]
Then
\begin{align*}
\varphi(z+h)_j &= (x_j+f_j)(y_j+g_j),\quad 1\le j\le p-1\\
\varphi(z+h)_{p-1+j} &=
(\hat{x}_j+\hat{f}_j)(\hat{y}_{-j}+\hat{g}_{-j}),\quad 1\le j\le p-1.
\end{align*}
Using $\|\hat{f}\|_2\|\hat{g}\|_2=\|f\|_2\|g\|_2\le \|h\|^2_2$, we get
\begin{align*}
\varphi(z+h)_j &=\varphi(z)_j+f_jy_j+x_jg_j+O(\|h\|_2^2)\\
\varphi(z+h)_{p-1+j} &=
\varphi(z)_{p-1+j}+\hat{f}_j\hat{y}_{-j}+\hat{x}_j\hat{g}_{-j}+O(\|h\|_2^2)
\end{align*}
in Landau's $O$-notation. Hence
\begin{eqnarray}
(\varphi'(z)h)_j &= y_jf_j+x_jg_j,\quad 1\le j\le p-1\label{5.20}\\
(\varphi'(x)h)_{p-1+j} &= \hat{y}_{-j}\hat{f}_j+\hat{x}_j\hat{g}_{-j},
\quad 1\le j\le p-1.\label{5.21}
\end{eqnarray}
To prove that $J(z)=\det(\varphi'(z))\ne 0$, we just have to show
that $\ker(\varphi'(z))=0$, i.e.
\[
\varphi'(z)h=0\ \Rightarrow\ h=0,\quad h\in\mathbb{C}^{2p-2}.
\]
By (\ref{5.18}) and (\ref{5.19}), the formulas (\ref{5.20}) and
(\ref{5.21}) can be written as
\[
(\varphi'(z)h)_j = \left\{\begin{array}{cc} x_jg_j, & j\in L\\ y_jf_j,
& j\in L'\end{array}\right.
\]
and
\[
(\varphi'(z)h)_{p-1+j}=\left\{\begin{array}{cc}\hat{x}_j\hat{g}_{-j},
& j\in K\\\hat{y}_{-j}\hat{f}_j, & j\in K'\end{array}\right..
\]
Hence, if $\varphi'(z)h=0$, then by (\ref{5.18}) and (\ref{5.19}),
\begin{align*}
g_j&=0\ (j\in L),\quad  f_j=0\ (j\in L'),\\
\hat{g}_{-j}&=0\ (j\in K),\quad \hat{f}_j = 0 (j\in K'),
\end{align*}
and since $f_0=g_0=0$ by the definition of $f$ and $g$, it follows that
\begin{align*}
\mathrm{supp}(f)\subseteq L, &\quad
\mathrm{supp}(\hat{f})\subseteq K\cup\{0\}\\
\mathrm{supp}(g)\subseteq L', &\quad
-\mathrm{supp}(\hat{g})\subseteq K'\cup\{0\}.
\end{align*}
Hence
\[
|\mathrm{supp}(f)|+|\mathrm{supp}(\hat{f})| \le |K|+|L|+1 = p
\]
and
\[
|\mathrm{supp}(g)|+|\mathrm{supp}(\hat{g})| \le |K'|+|L'|+1 = p
\]
By Proposition \ref{p3.3}, it now follows that $f=g=0$ and
hence $h=0$. Therefore $\ker(\varphi'(z))=0$, and
hence $J(z)\ne 0$.
\end{proof}

\begin{corollary}\label{col5.4}
Let $x',y'\in\mathbb{C}^{p-1}$, $x'=(x_1,\ldots,x_{p-1})$,
$y'=(y_1,\ldots,y_{p-1})$ and put $x=(1,x_1,\ldots,x_{p-1})$,
$y=(1,y_1,\ldots,y_{p-1})$. Then the function
$\psi:\mathbb{C}^{2p-2}\rightarrow\mathbb{C}^{2p-2}$ given by
the coordinate functions
\begin{align}
\psi_j(x',y') &= x_jy_j,\quad 1\le j\le p-1\label{e5.22}\\
\psi_{p-1+j}(x',y') &= \sum_{m=0}^{p-1}x_{j+m}y_m,\quad 1\le j\le p-1\label{e5.23}
\end{align}
is a proper holomorphic function of multiplicity 
$\left(\begin{smallmatrix}2p-2\\ p-1\end{smallmatrix}\right)$.
In particular the number of solutions

\noindent
$(x_1,\ldots,x_{p-1},y_1,\ldots,y_{p-1})$ to (\ref{2.4})
counted with multiplicity is equal to
$\left(\begin{smallmatrix}2p-2\\ p-1\end{smallmatrix}\right)$.
\end{corollary}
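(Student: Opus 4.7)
The plan is to deduce Corollary 5.4 from Theorem 5.3 by exhibiting an affine biholomorphism $T$ of $\mathbb{C}^{2p-2}$ such that $\varphi = T\circ\psi$, where $\varphi$ is the proper map of Lemma 5.1 / Theorem 5.3. The whole point of Proposition 2.3 is that it writes down this $T$ explicitly: on the target $\mathbb{C}^{2p-2}$ of $\psi$ I label coordinates $(a_1,\dots,a_{p-1},c_1,\dots,c_{p-1})$ and on the target of $\varphi$ I label them $(a_1,\dots,a_{p-1},b_1,\dots,b_{p-1})$, and I set
\[
T(a_1,\dots,a_{p-1},c_1,\dots,c_{p-1})=(a_1,\dots,a_{p-1},b_1,\dots,b_{p-1})
\]
with the $b_j$'s given by formula (2.12). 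This map is affine; its linear part on the $c$-block is $\tfrac{1}{p}(\e^{\I 2\pi jk/p})_{j,k\in\{1,\dots,p-1\}}$, which is invertible by Chebotar\"ev's Theorem (or equivalently by the explicit inverse (2.13) supplied in Proposition 2.3). Hence $T$ is a polynomial automorphism of $\mathbb{C}^{2p-2}$.

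Next I verify the factorization. Given $(x',y')\in\mathbb{C}^{2p-2}$, let $a_k=x_ky_k$, $c_k=\sum_{m=0}^{p-1}x_{k+m}y_m$ and $b_k=\hat{x}_k\hat{y}_{-k}$; by construction $\psi(x',y')=(a,c)$ and $\varphi(x',y')=(a,b)$. Proposition 2.3 (specifically the implication (2.10)$\Rightarrow$(2.11) together with (2.12)) is precisely the identity $b=b(a,c)$ via $T$, so $\varphi=T\circ\psi$ on all of $\mathbb{C}^{2p-2}$.

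From here the conclusion is essentially automatic. For properness: if $K\subseteq\mathbb{C}^{2p-2}$ is compact, so is $T(K)$, and hence $\psi^{-1}(K)=\varphi^{-1}(T(K))$ is compact by Lemma 5.1, so $\psi$ is proper. For the multiplicity: the chain rule gives $\det(\varphi'(z))=\det(T'(\psi(z)))\cdot\det(\psi'(z))$, and the first factor is a nonzero constant, so $\psi$ and $\varphi$ have the same critical set $M$, and $w$ is a regular value of $\psi$ iff $T(w)$ is a regular value of $\varphi$. For such a regular value $w$, Theorem 5.3 yields
\[
|\psi^{-1}(w)|=|\varphi^{-1}(T(w))|=\binom{2p-2}{p-1},
\]
so $\psi$ is a proper holomorphic function of multiplicity $\binom{2p-2}{p-1}$. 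Finally, the system (2.4) is exactly $\psi(x',y')=(1,\dots,1,0,\dots,0)$, and Theorem 4.8 then gives that the number of solutions counted with multiplicity equals the multiplicity of $\psi$.

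There is no genuine obstacle here; the only point needing care is to ensure that $T$ really is a biholomorphism, but this is handed to us by Proposition 2.3, which already exhibits its inverse in closed form. The rest is bookkeeping of Jacobians to transfer the multiplicity count from $\varphi$ to $\psi$.
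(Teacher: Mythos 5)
Your proposal is correct and follows essentially the same route as the paper: the paper also invokes Proposition 2.3 to write $\varphi=\Lambda\circ\psi$ with $\Lambda$ the affine bijection (5.25)--(5.26) whose inverse is (2.13), and then transfers properness and the multiplicity $\left(\begin{smallmatrix}2p-2\\ p-1\end{smallmatrix}\right)$ from Theorem 5.3 to $\psi$. You merely spell out the bookkeeping (properness via $\psi^{-1}(K)=\varphi^{-1}(T(K))$ and the Jacobian/regular-value transfer) that the paper leaves implicit.
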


\begin{proof}
Let $\varphi:\mathbb{C}^{2p-2}\rightarrow\mathbb{C}^{2p-2}$ 
be as in lemma \ref{l5.1}. By Proposition \ref{p2.3}
\begin{equation}
\varphi = \Lambda\circ\psi	\label{5.22}
\end{equation}
where $\Lambda:\mathbb{C}^{2p-2}\to \mathbb{C}^{2p-2}$ is the
affine map given by
\begin{equation}\label{e5.25}
\Lambda(a_1, \ldots, a_{p-1}, c_1,\ldots, c_{p-1})=
(a_1, \ldots, a_{p-1}, b_1,\ldots, b_{p-1})
\end{equation}
where
\begin{equation}\label{e5.26}
b_j={1\over p} (1+\sum^{p-1}_{m=1} a_m +
\sum^{p-1}_{k=1} e^{i2\pi jk/p} c_k), \quad 1\le j\le p=1
\end{equation}
Moreover by Proposition \ref{p2.3}, $\Lambda$ is a bijection
and its inverse is given by (2.13) with $n=p$.
Hence by (\ref{5.22})
\[
\psi=\Lambda^{-1} \circ\varphi
\]
where $\Lambda$ and $\Lambda^{-1}$ are affine transformations
of $\mathbb{C}^{2p-2}$. Therefore it follows from Theorem \ref{t5.3},
that $\psi$ is a proper holomorphic function of
multiplicity 
$\left(\begin{smallmatrix}2p-2\\ p-1\end{smallmatrix}\right)$,
so by Theorem \ref{t4.8} the number of solutions
$(x_1, \ldots, x_{p-1}, y_1, \ldots, y_{p-1})$ to
(2.4) counted with multiplicity is 
$\left(\begin{smallmatrix}2p-2\\ p-1\end{smallmatrix}\right)$.
\end{proof}

\section{The numbers of cyclic $p$-roots on $x$-level
and $z$-level} %
Throughout this section $p$ is again a prime
number. We will show that the numbers
of solutions to (2.3) and (2.1) counted
with multiplicity are both equal to
$\left(\begin{smallmatrix}2p-2\\ p-1\end{smallmatrix}\right)$.
In the case of (2.3), we consider $x_0$ as the
fixed number 1, so the problem has $p-1$ variables
$x_1, \ldots, x_{p-1}$.

\begin{lemma}\label{l6.1}
Put $a_0=x_0=1$ and define for $a=(a_1, \ldots, a_{p-1})\in
(\mathbb{C}^*)^{p-1}$
a map $\sigma_a:(\mathbb{C}^*)^{p-1}\to \mathbb{C}^{p-1}$ by
\[
\sigma_a (x_1,\ldots, x_{p-1})_j =
\sum^{p-1}_{m=0} a_m \frac{x_{m+j}}{x_m}, \quad
1\le j\le p-1
\]
Then $\sigma_a$ is a proper holomorphic function, and
the multiplicity of $\sigma_a$ is independent of
$a\in (\mathbb{C}^*)^{p-1}$.
\end{lemma}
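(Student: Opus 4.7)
The plan is to prove the lemma in two main stages. First I would establish that $\sigma_a$ is a proper holomorphic function for each fixed $a$, using a substitution that reduces everything to the already-established proper map $\psi$ from Corollary \ref{col5.4}. Second, I would prove multiplicity independence by introducing a family map that stitches the individual $\sigma_a$'s into a single proper holomorphic map on a larger domain, and applying Theorem \ref{t4.8}.

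Holomorphicity of $\sigma_a$ is immediate, as it is a rational function whose denominators $x_0 = 1, x_1, \ldots, x_{p-1}$ do not vanish on $(\mathbb{C}^*)^{p-1}$. For properness, I would substitute $y_m = a_m/x_m$ (so that $y_0 = a_0 = 1$). The defining formulas of $\psi$ in Corollary \ref{col5.4} then give
\[
\psi(x',y') = (a_1, \ldots, a_{p-1},\, \sigma_a(x)_1, \ldots, \sigma_a(x)_{p-1}).
\]
If $\sigma_a(x)$ lies in a compact set $K \subseteq \mathbb{C}^{p-1}$, then $\psi(x',y')$ lies in the compact set $\{(a_1,\ldots,a_{p-1})\} \times K$, so by properness of $\psi$ the pair $(x',y')$ stays bounded in $\mathbb{C}^{2p-2}$. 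Combined with $|x_j|\,|y_j| = |a_j| > 0$, this forces each $|x_j|$ to be bounded above and away from zero, so $x'$ stays in a compact subset of $(\mathbb{C}^*)^{p-1}$.

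For the independence of multiplicity, I would introduce the family map
\[
\Phi \colon (\mathbb{C}^*)^{p-1} \times (\mathbb{C}^*)^{p-1} \to (\mathbb{C}^*)^{p-1} \times \mathbb{C}^{p-1}, \qquad \Phi(a,x) = (a, \sigma_a(x)).
\]
The identical substitution argument, now treating $a$ as a variable rather than a parameter, shows that $\Phi$ is proper. Theorem \ref{t4.8} then yields a constant $M$ such that the number of preimages of every point in the codomain, counted with multiplicity, equals $M$. Since $\Phi$ preserves the first coordinate, its Jacobian at $(a,x)$ is block triangular with determinant $\det \sigma_a'(x)$; hence $(a,x)$ is a simple zero of $\Phi - (a,c)$ precisely when $x$ is a simple zero of $\sigma_a - c$. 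Now fix any $a \in (\mathbb{C}^*)^{p-1}$. By Theorem \ref{thm4.1}(b) the regular values of $\sigma_a$ form a dense open subset of $\mathbb{C}^{p-1}$, so I can pick a regular value $c$. Then every preimage of $(a,c)$ under $\Phi$ is simple, and their total number equals $|\sigma_a^{-1}(c)| = \mathrm{mult}(\sigma_a)$. Theorem \ref{t4.8} forces this to equal $M$, so $\mathrm{mult}(\sigma_a) = M$ is independent of $a$.

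The main obstacle I anticipate is the multiplicity independence step: a direct Rouché argument in the parameter $a$ is delicate because preimages could a priori escape to the boundary of $(\mathbb{C}^*)^{p-1}$ (either to infinity or to the coordinate hyperplanes where some $x_j \to 0$) as $a$ is varied. The auxiliary map $\Phi$ sidesteps this by packaging the escape-to-boundary issue into a single properness statement, itself reduced to the properness of $\psi$, after which a single application of Theorem \ref{t4.8} closes the argument.
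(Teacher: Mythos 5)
Your proof is correct, and while the properness step coincides with the paper's (the same substitution $y_m=a_m/x_m$ reducing everything to the properness of $\psi$ from Corollary \ref{col5.4}), your treatment of the multiplicity-independence step takes a genuinely different route. The paper fixes $a_0$, takes a small closed ball $\overline{U}\subseteq(\mathbb{C}^*)^{p-1}$ around it, uses the properness of $\psi$ to produce a single compact domain $D$ with piecewise smooth boundary containing all zeros of $\sigma_a$ for every $a\in\overline{U}$ together with the bound $\|\sigma_a(x')\|_2\ge 1$ outside $D$, and then applies the $n$-dimensional Rouch\'e theorem (Theorem 4.6) along the segment from $a_0$ to $a$ --- crucially exploiting that $a\mapsto\sigma_a$ is affine, so that $(1-t)\sigma_{a_0}+t\sigma_a=\sigma_{(1-t)a_0+ta}$ stays inside the family and is zero-free on $\partial D$; local constancy plus connectedness of $(\mathbb{C}^*)^{p-1}$ then finishes. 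You instead package the whole family into the single proper map $\Phi(a,x)=(a,\sigma_a(x))$, apply Theorem \ref{t4.8} once to get a global constant $M$, and transfer it to each $m(\sigma_a)$ via the block-triangular Jacobian and a regular value $c$ of $\sigma_a$ (so that all fibres over $(a,c)$ are simple and number exactly $m(\sigma_a)$). Your argument avoids Rouch\'e and the boundary estimates entirely, and it does not use the linearity of $\sigma_a$ in $a$, so it would apply to any holomorphic family whose graph map is proper; it also meshes naturally with the Fubini argument in Theorem \ref{t6.2}, since your $\Phi$ is just $\psi$ composed with the biholomorphism $(a,x)\mapsto(x,a/x)$ of $(\mathbb{C}^*)^{2p-2}$, which would in fact identify $M$ with $m(\psi)$ directly. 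What the paper's approach buys in exchange is that it only ever invokes Theorem \ref{t4.8} for the maps $\sigma_a$ themselves and keeps the argument local and elementary. The one place where you should be slightly more explicit is the properness of $\Phi$: after covering a compact $K$ by $K_1\times K_2$ with $K_1\subseteq(\mathbb{C}^*)^{p-1}$ compact, you need $\min_{a\in K_1,j}|a_j|>0$ together with the boundedness of $(x',y')$ coming from $\psi$ to keep both $x'$ and $y'$ away from the coordinate hyperplanes; this is exactly your argument for a single $a$ run uniformly over $K_1$, so it is a routine completion rather than a gap.
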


\begin{proof}
Let $a\in (\mathbb{C}^*)^{p-1}$. Then $\sigma_a$ is clearly
holomorphic. To prove that $\sigma_a$ is proper,
we let $K\subseteq \mathbb{C}^{p-1}$ be compact.
Put $a_0=x_0=y_0=1$ and let $\psi$ be the holomorphic map
defined in Corollary 5.4. Since $\psi$ is proper, the set
\[
L_a = \psi^{-1} (\{a\} \times K)
\]
is compact. Moreover $L_a$ is the set of
$(x', y')=(x_1, \ldots, x_{p-1}, y_1, \ldots, y_{p-1})\in
\mathbb{C}^{2p-2}$
for which
\[
x_j y_j=a_j,\quad 1\le j\le p-1
\]
and
\[
\left(\sum^{p-1}_{m=0} x_{j+m} y_m\right)^{p-1}_{j=1}\in K
\]
Since $a_j\not= 0$ $(1\le j\le p-1)$, $L_a$ can be expressed as
the set of
\[
\big(x_1, \ldots, x_{p-1}, \frac{a_1}{x_1}, \ldots,
\frac{a_{p-1}}{x_{p-1}}\big)\in \mathbb{C}^{2p-2}
\]
for which $(x_1, \ldots, x_{p-1})\in (\mathbb{C^*})^{p-1}$ and
\[
\left(\sum^{p-1}_{m=0} a_m \frac{x_{j+m}}{x_m}\right)^{p-1}_{j=1}
\in K
\]
Hence $\sigma^{-1}_{a}(K)=\pi(L_a)$, where
$\pi:\mathbb{C}^{2p-2}\to \mathbb{C}^{p-1}$
is the map that takes out the first ${p-1}$ 
coordinates of an element in $\mathbb{C}^{2p-2}$.
Therefore $\sigma^{-1}_{a}(K)$ is compact, and we have proved
that $\sigma_a$ is proper.

Note that $(\mathbb{C}^*)^{p-1}$ is a connected open set
in $\mathbb{C}^{p-1}$. In order to prove that $a\to m(\sigma_a)$
is a constant function on $(\mathbb{C}^*)^{p-1}$, it is
therefore sufficient to prove that for
every $a_0\in (\mathbb{C}^*)^{p-1}$, $m(\sigma_a)$ is constant in
a ball $U=B(a_0, \varepsilon)$, where $\varepsilon>0$ is chosen such
that $\overline{U}\subseteq (\mathbb{C}^*)^{p-1}$. Put now
\[
M=\max \{\|a\|_2 \mid a\in \overline{U}\}.
\]
Since the map $\psi: \mathbb{C}^{2p-2}\to
\mathbb{C}^{2p-2}$ is proper,
we can choose ${R}>0$, such that
\begin{equation}\label{e6.1}
\| \psi(z)\|_2 \ge(M^2+1)^{1/2},\quad
{\text{when}}\quad  \| z\|_2\ge R
\end{equation}
Applying (6.1) to
\[
z=\Big(x_1, \ldots, x_{p-1}, \frac{a_1}{x_1}, \ldots,
\frac{a_{p-1}}{x_{p-1}}\Big)
\]
for $x'=(x_1, \ldots, x_{p-1})\in 
(\mathbb{C}^*)^{p-1}$, we get that
\[
\| a\|^2_2 +\| \sigma_a(x')\|^2_2=\| \psi (z)\|^2_2
\ge M^2+1
\]
when
\[
\| (x_1, \ldots, x_{p-1})\|^2_2 +\|\Big(\frac{a_1}{x_1},
\ldots, \frac{a_{p-1}}{x_{p-1}}\Big) \|^2_2 \ge R^2
\]
and since $\|a\|_2\le M$ for $a\in \overline{U}$ it follows
that
\begin{equation}\label{e6.2}
\begin{aligned}
\| \sigma_a(x')\|_2 & \ge 1,\quad {\text{when}}\quad
a\in\overline{U}\quad {\text{and}}\\
\| (x_1,\ldots, x_{p-1})\|_2 &\ge R\quad {\text{or}}\quad
\|\Big(\frac{a_1}{x_1},\ldots,\frac{a_{p-1}}{x_{p-1}}\Big) \|_2 \ge R
\end{aligned}
\end{equation}
Put
\[
D=\{(x_1, \ldots, x_{p-1}) \in\{ (\mathbb{C}^*)^n
\mid \frac{c}{R}<x_j <R\}
\]
where $c=\min\{|a_j| \mid a\in \overline{U},\ j=1,\ldots,
p-1\}>0$.
By replacing $R$ with a larger number, we
can assume that $\frac{c}{R}<R$. Then $\overline{D}$ is a
non-empty compact subset of $(\mathbb{C}^*)^{p-1}$ and
its boundary $\partial D$ has $2^{p-1}$ smooth
components. By (6.2) all the zeros of
$\sigma_a$ are in $D$, when $a\in \overline{U}$. Let
$a\in \overline{U}$.
Since $\overline{U}$ is convex, all the functions
\[
(1-t)\sigma_{a_0}+t\sigma_a,\quad 0\le t\le 1
\]
are of the form $\sigma_{a'}$ for an $a'\in\overline{U}$,
namely $a'=(1-t)a_0+t_a$. Hence by
applying Rouch\'es Theorem (Theorem 4.6)
to $f=\sigma_{a_0}$ and $g=\sigma_a-\sigma_{a_0}$, we get
that $\sigma_{a_0}$ and $\sigma_a$ have the same number
of zeros in $D$ counted with multiplicity,
and since neither $\sigma_{a_0}$ nor $\sigma_a$ has zeros
in $(\mathbb{C}^*)^{p-1}\setminus D$, it follows that $\sigma_{a_0}$
and $\sigma_a$ have the same number of zeros in
$(\mathbb{C}^*)^{p-1}$ counted with multiplicity. Therefore
by Theorem 4.8, $m(\varphi_a)=m(\varphi_{a_0})$ for all 
$a\in \overline{U}$. Hence we have
proved that $m(\varphi_a)$ is a constant function on
$(\mathbb{C}^*)^{p-1}$.
\end{proof}

\begin{theorem}\label{t6.2}
Put $x_0=1$, and let $\sigma:(\mathbb{C}^*)^{p-1}\to
\mathbb{C}^{p-1}$ be
the function defined by
\[
\sigma(x_1, \ldots, x_{p-1})_j=\sum^{p-1}_{m=0}
\frac{x_{m+j}}{x_m}\quad 1\le j\le p-1
\]
Then $\sigma$ is a proper holomorphic function of
multiplicity 
$\left(\begin{smallmatrix}2p-2\\ p-1\end{smallmatrix}\right)$.
In particular there are
$\left(\begin{smallmatrix}2p-2\\ p-1\end{smallmatrix}\right)$
cyclic $p$-roots on $x$-level counted with multiplicity.
\end{theorem}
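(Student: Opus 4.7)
The plan is to identify $\sigma$ as the member $\sigma_{\mathbf{1}}$ of the family introduced in Lemma~\ref{l6.1}, where $\mathbf{1}=(1,\dots,1)\in(\mathbb{C}^*)^{p-1}$. That lemma immediately supplies the properness of $\sigma$ and the fact that the multiplicity $m(\sigma_a)$ is constant on $(\mathbb{C}^*)^{p-1}$. Hence it suffices to compute $m(\sigma_a)$ for any convenient $a$, and the natural route is to compare $\sigma_a$ with the proper map $\psi$ of Corollary~\ref{col5.4}, whose multiplicity is already known to equal $\binom{2p-2}{p-1}$.

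The comparison is effected by the holomorphic embedding
\[
\iota : (\mathbb{C}^*)^{p-1}\longrightarrow \mathbb{C}^{2p-2},\quad x'\longmapsto\Big(x_1,\dots,x_{p-1},\frac{a_1}{x_1},\dots,\frac{a_{p-1}}{x_{p-1}}\Big),
\]
which satisfies the identity $\psi\circ\iota(x')=(a,\sigma_a(x'))$. Indeed, the first $p-1$ coordinates of $\psi\circ\iota$ reduce to $x_j(a_j/x_j)=a_j$, while the remaining $p-1$ coordinates reproduce $\sigma_a(x')$ via (\ref{e5.23}). Conversely, any solution $(x',y')$ of $\psi(x',y')=(a,c)$ lies in the image of $\iota$, because $x_jy_j=a_j\ne 0$ forces $x_j\ne 0$ and $y_j=a_j/x_j$. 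Consequently, $\iota$ yields a bijection $\sigma_a^{-1}(c)\leftrightarrow \psi^{-1}(a,c)$ for every $c\in\mathbb{C}^{p-1}$.

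To match multiplicities I would carry out a Jacobian comparison. Writing $\psi'(x',y')$ as a $2(p-1)\times 2(p-1)$ block matrix $\bigl(\begin{smallmatrix}D_y & D_x\\ A & B\end{smallmatrix}\bigr)$, with $D_x,D_y$ diagonal (from the relations $x_jy_j$) and $A,B$ coming from the differentiated sums in (\ref{e5.23}), standard block column operations exploiting the invertibility of the diagonal blocks on $\iota((\mathbb{C}^*)^{p-1})$ yield an identity of the form
\[
\det\psi'(\iota(x'))=(-1)^{p-1}\,x_1x_2\cdots x_{p-1}\,\det\sigma_a'(x').
\]
Since $x_1\cdots x_{p-1}\ne 0$ on $(\mathbb{C}^*)^{p-1}$, a point $x'$ is a simple zero of $\sigma_a-c$ if and only if $\iota(x')$ is a simple zero of $\psi-(a,c)$.

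To conclude, pick $c\in\mathbb{C}^{p-1}$ outside the set of critical values of $\sigma_a$ (possible by Theorem~\ref{thm4.1}(b) applied to $\sigma_a$). Every point of $\sigma_a^{-1}(c)$ is then simple, so by the Jacobian identity every point of $\psi^{-1}(a,c)$ is simple as well, making $(a,c)$ a regular value of $\psi$. Applying Theorem~\ref{thm4.1}(c) to both maps and using the bijection gives
\[
m(\sigma_a)=|\sigma_a^{-1}(c)|=|\psi^{-1}(a,c)|=m(\psi)=\binom{2p-2}{p-1},
\]
and Lemma~\ref{l6.1} then yields $m(\sigma)=\binom{2p-2}{p-1}$. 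The statement about cyclic $p$-roots on $x$-level is Theorem~\ref{t4.8} applied to $\sigma$ at $w=0$, since (\ref{2.3}) is precisely the system $\sigma(x_1,\dots,x_{p-1})=0$. The main technical step is the Jacobian identity; a cleaner algebraic alternative is to observe that the relations $x_jy_j-a_j=0$ can be used to eliminate the $y_j$ in the local ring at $\iota(x')$, giving a canonical isomorphism of local rings that matches the algebraic multiplicities of $\psi-(a,c)$ and $\sigma_a-c$ directly.
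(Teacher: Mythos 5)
Your proposal is correct, but it replaces the key step of the paper's argument with a genuinely different one. Both proofs begin identically: properness and the constancy of $a\mapsto m(\sigma_a)$ come from Lemma \ref{l6.1}, and both exploit the same fiber bijection $\sigma_a^{-1}(c)\leftrightarrow\psi^{-1}(a,c)$ induced by $x'\mapsto(x',(a_j/x_j)_j)$. The difference is in how the multiplicity is transferred from $\psi$ to $\sigma_a$. The paper argues measure-theoretically: the critical value set $N$ of $\psi$ is Lebesgue null by Sard's theorem, so by Fubini the slice $N_a$ is null for almost every $a$; for such $a$ the generic fiber of $\sigma_a$ has $m(\psi)$ points, whence $m(\sigma_a)=m(\psi)$ off a null set $N'$ of parameters $a$ --- and the constancy statement of Lemma \ref{l6.1} is then genuinely needed to reach the particular value $a=(1,\dots,1)$, which could a priori lie in $N'$. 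You instead fix $a$ outright and prove the pointwise Jacobian factorization $\det\psi'(\iota(x'))=(-1)^{p-1}x_1\cdots x_{p-1}\det\sigma_a'(x')$ by block column operations; I checked that $A-BD_x^{-1}D_y$ does equal $\sigma_a'(x')$ at $y_j=a_j/x_j$, so the identity holds. This shows that a regular value $c$ of $\sigma_a$ makes $(a,c)$ a regular value of $\psi$, and Theorem \ref{thm4.1}(c) applied to both maps gives $m(\sigma_a)=m(\psi)=\binom{2p-2}{p-1}$ directly. Your route avoids the Fubini slicing entirely and in fact renders the constancy part of Lemma \ref{l6.1} redundant (your computation already works at $a=(1,\dots,1)$); it also yields the stronger local statement that corresponding zeros have equal multiplicity. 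The paper's route avoids any Jacobian computation at the cost of the measure-theoretic detour. The final reduction --- that the cyclic $p$-roots on $x$-level are exactly the solutions of $\sigma(x')=0$, counted via Theorem \ref{t4.8} --- is the same in both.
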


\begin{proof}
Let $\psi: \mathbb{C}^{2p-2}\to \mathbb{C}^{2p-2}$
be the holomorphic function defined in Theorem 5.4.
Then $\psi$ is proper and
has multiplicity $m(\psi)=
\left(\begin{smallmatrix}2p-2\\ p-1\end{smallmatrix}\right)$.
Let $N=\psi(M)$ denote the set of critical values for
$\psi$. Then by Theorem 4.1, and Remark 4.2, $N$ is
a closed set, and
\[
m_{4p-4} ({N})=0
\]
where $m_{4p-4}$ is the Lebesgue measure in
$\mathbb{C}^{2p-2}\simeq \mathbb{R}^{4p-4}$.
By Theorem 4.1 the number of
district solutions $z\in \mathbb{C}^{2p-2}$ to
\[
\varphi(z)=w
\]
is $m(\varphi)$ for every $w=(a, c)\in(\mathbb{C}^{p-1}\times
\mathbb{C}^{p-1}) \setminus N$. Since
$m_{4p-4} (N)=(m_{2p-2} \times m_{2p-2})(N),$
where $m_{2p-2}$ is the Lebesgue measure in 
$\mathbb{C}^{p-1}$, it follows that
\[
0=m_{4p-4} (N)=\int_{\mathbb{R}^{2p-2}} m_{2p-2}(N_a)
dm_{2p-2} (a)
\]
where $N_a=\{c\in \mathbb{C}^{p-1}\mid (a,c)\in N\}$
(see e.g. [Ru 2, Sect. 8].) Hence the set
\[
N'=\{a\in \mathbb{C}^{p-1}\mid m_{2p-2} (N_a)\not= 0\}
\]
is a $m_{2p-2}$--null set in $\mathbb{C}^{p-1}$. Moreover
for all $a\in\ \mathbb{C}^{p-1}\setminus N'$, the
number of district solutions to
\begin{equation}\label{e6.3}
\psi(z)=(a, c)
\end{equation}
is exactly $m(\psi)$ for all $c\in \mathbb{C}^{p-1}$
outside  the Lebesgue null set $N_a$. If $a\in 
(\mathbb{C}^*)^{p-1}$ we have from the proof of lemma 6.1.
that the solution (6.3) are precisely the elements
in $(\mathbb{C}^*)^{2p-2}$ of the form
\[
\Big(x_1, x_2, \ldots, x_{p-1}, \frac{a_1}{x_1},\ldots,
\frac{a_{p-1}}{x_{p-1}}\Big)
\]
for which $\sigma_a(x_1, \ldots, x_{p-1})=c$. Hence for
$a\in (\mathbb{C}^*)^{p-1}\setminus N'$, the number of 
distinct solutions to $\sigma_a(x')=c$ is equal to
$m(\psi)$ for Lebseque almost all 
$c\in \mathbb{C}^{p-1}$. Therefore by
Theorem 4.1 and Remark 4.2, the multiplicity
$m(\sigma_a)$ of $\sigma_a$ is equal to $m(\psi)$ for
all $a\in (\mathbb{C}^*)^{p-1}\setminus N'$.
But since $a\to m(\sigma_a)$ is a constant function
on $(\mathbb{C}^*)^{p-1}$ by lemma 6.1,
it follows that $m(\sigma_a)=m(\psi)$ for all
$a\in (\mathbb{C}^*)^{p-1}$. Putting 
$a=(1, \ldots, 1)$, we get in particular, that 
$m(\sigma)=m(\psi)=
\left(\begin{smallmatrix}2p-2\\ p-1\end{smallmatrix}\right)$.
Thus by Theorem 4.8, the number of 
solution $(x_1, \ldots, x_{p-1})$ to (2.3) counted with
multiplicity is equal to
$\left(\begin{smallmatrix}2p-2\\ p-1\end{smallmatrix}\right)$
where $n=p$.
\end{proof}

\begin{lemma}\label{l6.3}
Put $x_0=1$ and let $h:(\mathbb{C}^*)^p\to
(\mathbb{C}^*)^{p}$ be the
function given by
\begin{equation}\label{e6.4}
h(x_1, \ldots, x_{p-1}, \alpha)=\Big(\frac{\alpha x_1}{x_0},
\frac{\alpha x_2}{x_1}, \ldots, \frac{\alpha x_0}{x_{p-1}}\Big)
\end{equation}

Then $h$ is proper, and for every 
$(z_0, \ldots, z_{p-1})\in (\mathbb{C}^*)^p$
there are exactly $p$ distinct solutions 
in $(\mathbb{C}^*)^p$ to the equation 
\begin{equation}\label{e6.5}
h(x_1, \ldots, x_{p-1}, \alpha)=
(z_0, \ldots, z_{p-1})
\end{equation}
\end{lemma}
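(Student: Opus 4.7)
The plan is to solve the equation $h(x_1,\ldots,x_{p-1},\alpha)=(z_0,\ldots,z_{p-1})$ explicitly by cascading forward through the coordinates, obtain a single polynomial equation for $\alpha$, and then read off both the solution count and the properness bounds from the resulting formulas.

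First I would write out the equation coordinate by coordinate. Writing $h_j=\alpha x_{j+1}/x_j$ with indices mod $p$ and $x_0=1$, the equation $h_j=z_j$ for $j=0$ gives $x_1=z_0/\alpha$, the equation for $j=1$ gives $x_2=x_1z_1/\alpha = z_0z_1/\alpha^2$, and by an obvious induction on $k$,
\begin{equation*}
x_k=\frac{z_0z_1\cdots z_{k-1}}{\alpha^k},\qquad 1\le k\le p-1.
\end{equation*}
The last coordinate equation, for $j=p-1$, reads $\alpha x_0/x_{p-1}=z_{p-1}$, i.e.\ $\alpha=x_{p-1}z_{p-1}$, and substituting the formula for $x_{p-1}$ turns this into the single consistency condition
\begin{equation*}
\alpha^p=z_0z_1\cdots z_{p-1}.
\end{equation*}

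Since $(z_0,\ldots,z_{p-1})\in(\mathbb{C}^*)^p$ the right-hand side is a nonzero complex number, so this polynomial equation has exactly $p$ distinct solutions for $\alpha$ in $\mathbb{C}^*$ (the $p$-th roots of $z_0\cdots z_{p-1}$). For each such $\alpha$ the displayed formulas produce a unique $(x_1,\ldots,x_{p-1})\in(\mathbb{C}^*)^{p-1}$, and it is immediate from the derivation that this gives a solution. Hence (\ref{e6.5}) has exactly $p$ distinct solutions in $(\mathbb{C}^*)^p$.

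For properness, let $K\subseteq(\mathbb{C}^*)^p$ be compact. Then there exist constants $0<c\le C<\infty$ with $c\le|z_j|\le C$ for all $(z_0,\ldots,z_{p-1})\in K$ and all $j$. The consistency relation $\alpha^p=z_0\cdots z_{p-1}$ forces $c\le|\alpha|\le C$, and the formulas for $x_k$ yield $(c/C)^k\le|x_k|\le(C/c)^k$ for $1\le k\le p-1$. Thus $h^{-1}(K)$ is contained in a compact product of annuli inside $(\mathbb{C}^*)^p$; since $h^{-1}(K)$ is closed in $(\mathbb{C}^*)^p$, it is compact.

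There is no real obstacle here: the only step requiring care is the bookkeeping in the cascade that produces the formulas for $x_k$ and the consistency equation $\alpha^p=z_0\cdots z_{p-1}$; once that is in hand, both the counting statement and the properness bounds drop out at once.
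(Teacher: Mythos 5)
Your proposal is correct and follows essentially the same route as the paper: solve the first $p-1$ coordinate equations to get $x_k=z_0\cdots z_{k-1}/\alpha^k$, reduce the last one to $\alpha^p=z_0\cdots z_{p-1}$ to count the $p$ solutions, and use these explicit formulas to bound $h^{-1}(K)$ inside a compact subset of $(\mathbb{C}^*)^p$ for properness. Your version just makes the annulus bounds explicit where the paper leaves them implicit.
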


\begin{proof}
We start by solving (6.5) w.r.t.
$(x_1, \ldots, x_{p-1}, \alpha)$. By (6.4)
\begin{equation}\label{e6.6}
z_0 z_1\cdot \ldots \cdot z_{p-1}=\alpha^p
\end{equation}
Hence $\alpha$ is one of the $p$ distinct $p$'th roots of
$z_0 z_1\cdot \ldots\cdot z_{p-1}$. For each such $\alpha$,
there is a unique solution to (6.5) given by
\begin{equation}\label{e6.7}
x_1=\frac{z_0}{\alpha},\ x_2=\frac{z_0 z_1}{\alpha^2}, \ldots,
x_{p-1}=\frac{z_0 z_1\cdot \ldots \cdot z_{p-2}}{\alpha^{p-1}}
\end{equation}
Hence (6.5) has exactly $p$ distinct solutions.
Let $K\subseteq (\mathbb{C}^*)^p$ be compact. Then there exists
$R>0$, such that
\[
K\subseteq \{z\in (\mathbb{C}^*)^p \mid
\frac{1}{R} \le \mid z_j\mid \le R,\ 0\le j\le p-1\}.
\]
>From (6.6) and (6.7) it now follows that
$h^{-1} (K)$ is relatively compact in
$(\mathbb{C}^*)^p$, which
by the continuity of $h$ implies that
$h^{-1}(K)$ is compact.
Hence $h$ is proper.
\end{proof}

\begin{theorem}\label{t6.4}
Let $\rho: \mathbb{(C^*)}^{p}\to \mathbb{C}^{p-1}\times
\mathbb{C^*}$ be the function given
by
\begin{align*}
\rho_1(z) &= z_0 + z_1 + \ldots + z_{p-1}\\
\rho_2(z) &= z_0z_1 + z_1z_2 + \ldots + z_{p-1} z_0\\
\vdots &\\
\rho_{p-1}(z) &= z_0z_1\cdot \ldots \cdot z_{p-2} + \ldots + z_{p-1} z_0
\cdot \ldots \cdot z_{p-3}\\
\rho_p (z) & =z_0z_1\cdot \ldots\cdot z_{p-1}
\end{align*}
Then $\rho$ is a proper holomorphic function of
multiplicity 
$\left(\begin{smallmatrix}2p-2\\ p-1\end{smallmatrix}\right)$.
In particular, the numbers of cyclic $p$-roots on
$z$-level (i.e. the number of solutions to
(2.1) counted with multiplicity is equal to
$\left(\begin{smallmatrix}2p-2\\ p-1\end{smallmatrix}\right)$.
\end{theorem}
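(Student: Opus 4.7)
The plan is to factor $\rho$ through the proper maps $h$ (from Lemma \ref{l6.3}) and $\sigma$ (from Theorem \ref{t6.2}). Put $x_0=1$ and $x'=(x_1,\ldots,x_{p-1})$. For every $(x',\alpha)\in(\mathbb{C}^*)^{p-1}\times\mathbb{C}^*$, a telescoping computation from the definition (\ref{e6.4}) of $h$ shows that the $k$-fold cyclic product $z_j z_{j+1}\cdots z_{j+k-1}$ of $z=h(x',\alpha)$ equals $\alpha^k x_{j+k}/x_j$ (indices mod $p$). Summing over $j$ gives
\[
\rho_k\bigl(h(x',\alpha)\bigr)=\alpha^k\,\sigma_k(x'),\quad 1\le k\le p-1,\qquad \rho_p\bigl(h(x',\alpha)\bigr)=\alpha^p,
\]
where $\sigma=(\sigma_1,\ldots,\sigma_{p-1})$ is the proper holomorphic map of multiplicity $\binom{2p-2}{p-1}$ from Theorem \ref{t6.2}. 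Denote by $\Phi$ the map $(x',\alpha)\mapsto(\alpha\sigma_1(x'),\ldots,\alpha^{p-1}\sigma_{p-1}(x'),\alpha^p)$ from $(\mathbb{C}^*)^{p-1}\times\mathbb{C}^*$ into $\mathbb{C}^{p-1}\times\mathbb{C}^*$, so that $\rho\circ h=\Phi$.

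Next I would prove that $\Phi$ is proper. Suppose $\Phi(x'_m,\alpha_m)\to w\in \mathbb{C}^{p-1}\times\mathbb{C}^*$. Then $\alpha_m^p\to w_p\neq 0$, so $(\alpha_m)$ remains in a compact subset of $\mathbb{C}^*$ and, after passing to a subsequence, converges to an $\alpha$ with $\alpha^p=w_p$. Along this subsequence $\sigma(x'_m)\to(w_1/\alpha,\ldots,w_{p-1}/\alpha^{p-1})$, and the properness of $\sigma$ delivers a further subsequence along which $x'_m$ converges in $(\mathbb{C}^*)^{p-1}$. Hence $\Phi$ is proper. Since $h$ is a continuous, surjective, proper map (Lemma \ref{l6.3}), the identity $\rho^{-1}(K)=h(\Phi^{-1}(K))$ shows that $\rho^{-1}(K)$ is compact whenever $K$ is. Thus $\rho$ is a proper holomorphic map from the region $(\mathbb{C}^*)^p$ into the region $\mathbb{C}^{p-1}\times\mathbb{C}^*$.

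Finally I would count the multiplicity. A short block computation of the Jacobian gives $\det\Phi'(x',\alpha)=p\,\alpha^{p-1+p(p-1)/2}\det\sigma'(x')$, so the regular values of $\Phi$ are exactly the $w$ for which every $p$-th root $\alpha$ of $w_p$ produces a regular value $c_\alpha:=(w_1/\alpha,\ldots,w_{p-1}/\alpha^{p-1})$ of $\sigma$. Picking such a $w$ (which is possible since the critical set is thin, by Theorem \ref{thm4.1}(b)) and assuming it is simultaneously a regular value of $\rho$, Theorem \ref{t4.8} applied to $\Phi$ and to $\sigma$ yields
\[
m(\Phi)=|\Phi^{-1}(w)|=\sum_{\alpha^p=w_p}|\sigma^{-1}(c_\alpha)|=p\cdot\binom{2p-2}{p-1}.
\]
On the other hand Lemma \ref{l6.3} says every fibre of $h$ has exactly $p$ points, so $|\Phi^{-1}(w)|=p\,|\rho^{-1}(w)|=p\,m(\rho)$, and comparing these two expressions gives $m(\rho)=\binom{2p-2}{p-1}$. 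The main obstacle is the last step: one has to verify that the factorisation $\rho\circ h=\Phi$ interacts correctly with Jacobians and regular values, so that the three counts ($p$ choices of $\alpha$, $p$ sheets of $h$, and $m(\sigma)$ solutions to $\sigma(x')=c_\alpha$) multiply in the naive way; the Jacobian identity above is what makes this bookkeeping work.
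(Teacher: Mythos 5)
Your proposal is correct and follows essentially the same route as the paper: factor $\rho\circ h$ through $\sigma$ via $(\rho\circ h)(x',\alpha)=(\alpha\sigma_1(x'),\ldots,\alpha^{p-1}\sigma_{p-1}(x'),\alpha^p)$, deduce properness of $\rho$ from $\rho^{-1}(K)=h((\rho\circ h)^{-1}(K))$, and obtain $m(\rho)=m(\sigma)$ by double-counting the fibre of $\rho\circ h$ over a generic $w$ (once as $p$ roots $\alpha$ times $m(\sigma)$ solutions, once as $p$ sheets of $h$ times $m(\rho)$ solutions). The only, harmless, difference is in how the generic $w$ is located: you use the explicit Jacobian identity for $\Phi$ together with a simultaneous-regular-value argument, whereas the paper runs an almost-everywhere counting argument with Lebesgue null sets; both devices are valid.
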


\begin{proof}
Consider the composed map $\rho\circ h: \mathbb{(C^*)}^{p}\to
\mathbb{C}^{p-1}\times \mathbb{C^*}$,
where $h$ is given by (6.4) with $x_0 =1$. Then
\[
(\rho\circ h)_j (x_1, \ldots, x_{p-1}, \alpha)=
\alpha^j \sum^{p-1}_{m=0} \frac{x_{m+j}}{x_m},\quad
1\le j\le p-1
\]
and
\[
(\rho\circ h)_p (x_1, \ldots, x_{p-1}, \alpha)=\alpha^p
\]
Let $\sigma:(\mathbb{C}^*)^{p-1}\to \mathbb{C}^{p-1}$
be the proper holomorphic map from Theorem 6.2. Then
for $x'=(x_1, \ldots, x_{p-1})\in\mathbb{(C^*)}^{p-1}$ and
$\alpha \in \mathbb{C^*}$
\begin{equation}\label{e6.8}
(\rho\circ h) (x', \alpha) =(\alpha \sigma_1 (x'), \ldots,
\alpha^{p-1} \sigma_{p-1} (x'), \alpha^p)
\end{equation}
Since $\sigma:(\mathbb{C}^*)^{p-1}\to \mathbb{C}^{p-1}$ is
proper, it is elementary to deduce from (6.8), that
$\rho\circ h$ is a proper map from $\mathbb{(C^*)}^p$ to
$\mathbb{C}^{p-1}\times \mathbb{C^*}$. Moreover since
$h$ maps $(\mathbb{C^*})^p$ into
$(\mathbb{C^*)}^p$, we have for every compact subset
$K$ of 
$\mathbb{C}^{p-1}\times \mathbb{C^*}$, that
\[
\rho^{-1}(K)=h(h^{-1} (\rho^{-1}(K))=h((\rho\circ h)^{-1}
(K)),
\]
which is compact by the properness of
$\rho \circ h$. Hence $\rho$ is proper.

We will prove that $m(\rho) =m(\sigma)$ by
computing the multiplicity of $g\circ h$ in two
ways: By Theorem 4.1 and Remark 4.2,
there exists a Lebesgue nullset $N_0\subseteq \mathbb{C}^{p-1}$
such that for all $w\in \mathbb{C}^{p-1}\setminus N_0$ the
equation $\sigma_p(x')=w$ has $m(\sigma)$ district solutions
in $\mathbb{(C^*)}^{p-1}$. For $(x', \alpha)\in (\mathbb{C}^{p-1}
\setminus N_0) \times \mathbb{C^*}$,
$(\rho\circ h)(x', \alpha)=w$ if and
only if
\begin{equation}\label{e6.9}
\alpha^p = w_p
\end{equation}
and
\begin{equation}\label{e6.10}
\sigma(x')=\Big(\frac{1}{\alpha} w_1, \ldots, \frac{1}{\alpha^{p-1}}
w_{p-1}\Big)
\end{equation}
Since (6.9) has exactly $p$ distinct
solutions, it follows that $(\rho\circ h)(x', \alpha)=w$
has exactly $p m(\sigma)$ distinct solution
for all such $w$. The complement of
$(\mathbb{C}^{p-1}\setminus N_0)\times \mathbb{C^*}$ in
$\mathbb{C}^{p-1}\times \mathbb{C^*}$ is 
$N_0 \times \mathbb{C^*}$ 
which is a null set w.r.t. the Lebesgue
measure in $\mathbb{C}^p$. Hence by Theorem 4.1
and Remark 4.2, $m(\rho\circ h)= p m (\sigma)$.

By the definition of $m(\rho)$, there exists
a Lebesgue null set $N$ in $\mathbb{C}^{p-1} \times \mathbb{C^*}$,
such that for all $w\in \mathbb{C}^{p-1}\times \mathbb{C^*}
\setminus N$, the number of
distinct solutions $z\in \mathbb{(C^*)}^p$ to 
$\rho(z)=w$ is equal to $m(\rho)$. By lemma 6.3
we then get that the number of distinct
solutions $u\in \mathbb{(C^*)}^p$ to $\rho(h(u))=w$ 
is equal to $pm(\rho)$. Since $N$ is a Lebesgue nullset
it follows that $m(\rho\circ h)=p\cdot m(\rho)$. Hence
\[
m(\rho)=\frac{1}{p} m(\rho\circ h)=m(\sigma)=
\left(\begin{smallmatrix}2p-2\\ p-1\end{smallmatrix}\right).
\]
By Theorem 4.8 the number of solutions
to (2.1) with $n=p$ counted with multiplicity is equal
to $\left(\begin{smallmatrix}2p-2\\ p-1\end{smallmatrix}\right).$
\end{proof}

\section{Cyclic $p$-roots of simple index $k$}
Let $p$ be a prime number and let $k\in\mathbb{N}$ be a number that divides $p-1$. Since the group $(\mathbb{Z}_p^*,\cdot)$ is cyclic, it has a unique subgroup $G_0$ of index $k$, namely
\[
G_0=\{ h^k | h\in\mathbb{Z}_p^*\}.
\]
Moreover, if $g\in\mathbb{Z}_p^*$ is a generator for $\mathbb{Z}_p^*$, then
\[
G_l=g^l G_0,\qquad 1\le l\le k-1
\]
are the $k-1$ non-trivial cosets of $G_0$ in $\mathbb{Z}_p^*$. Following the notation of \cite{bh}, a cyclic $p$-root $z=(z_0,z,\ldots,z_{p-1})$ has simple index $k$ if the corresponding cyclic $p$-roots on $x$-level
\[
x=(1,z_0,z_0z_1,\ldots,z_0z_i\cdot\ldots\cdot z_{p-2})
\]
is of the form
\begin{equation}\label{7.1}
\left\{\begin{array}{l}
x_0=1\\
x_i=c_l,\quad \textrm{if}\quad i\in G_l,\quad 1\le i\le p-1,
\end{array}\right.
\end{equation}
where $(c_0,c_1,\ldots,c_{k-1})\in (\mathbb{C^*})^k$. These special cyclic $p$-roots where introduced by Bj\" orck in \cite{bj} under a slightly different name (cyclic $p$-roots of simple preindex $k$). It was shown in \cite{bj}, that if $x=(1,x_1,x_2,\ldots,x_{p-1})$ has the form (\ref{7.1}), then the equations (\ref{2.3}) can be reduced to the following set of $k$ rational equations in $c_0,\ldots,c_{k-1}$:
\begin{equation}\label{7.2}
c_a+\frac{1}{c_{a+m}}+\sum_{i,j=0}^{k-1} n_{ij}\frac{c_{a+j}}{c_{a+i}}=0\qquad (0\le a\le k-1)
\end{equation}
where indices are calculated modulo $k$. In (\ref{7.2}) the number $m$ is determined by $p-1\in G_m$ and $n_{ij}$ denote the number of $b\in G_i$ for which $b+1\in G_{i+1}$ $(0\le i,j\le k-1)$. The set of equations (\ref{7.2}) is independent of the choice of the generator $g$ for $\mathbb{Z}_p^*$ up to permutation of the variables and of the equations. The main result of this section is:

\begin{theorem}\label{thm7.1}
For every $k\in\mathbb{N}$ and for every prime number $p$ for which $k$ divides $p-1$, the function $\chi: (\mathbb{C}^*)^k\rightarrow \mathbb{C}^k$ given by
\[
\chi(c_0,\ldots,c_{k-1})_a = c_a+\frac{1}{c_{a+m}}+\sum_{i,j=0}^{k-1} n_{ij}\frac{c_{a+j}}{c_{a+i}} = 0,\qquad (0\le a \le k-1)
\]
is a proper holomorphic function of multiplicity $\left(\begin{smallmatrix}2k\\ k\end{smallmatrix}\right)$. In particular the number of solutions $(c_0,\ldots,c_{k-1})\in(\mathbb{C}^*)^k$ to \emph{(\ref{7.2})} counted with multiplicity is equal to $\left(\begin{smallmatrix}2k\\ k\end{smallmatrix}\right)$.
\end{theorem}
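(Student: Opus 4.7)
\emph{Proof plan.} The approach is to mirror Sections~5 and~6 within the $G_0$-invariant part of the $(x,y)$-level setup, where $G_0=\{h^k:h\in\mathbb{Z}_p^*\}$ is the unique index-$k$ subgroup of $\mathbb{Z}_p^*$. The key observation is that the cyclic $p$-root equations are invariant under the multiplicative action of $\mathbb{Z}_p^*$ on indices, so in particular the map $\psi$ of Corollary~\ref{col5.4} is $\mathbb{Z}_p^*$-equivariant, and the simple-index-$k$ ansatz $x_i=c_{l(i)}$ (with $l(i)$ the coset of $i$) cuts out exactly the $G_0$-invariant subspace of the domain of $\psi$.

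I would first introduce auxiliary variables $d_0,\ldots,d_{k-1}$ and the map $\tilde\chi:\mathbb{C}^{2k}\to\mathbb{C}^{2k}$ with coordinates $\tilde\chi_a(c,d)=c_ad_a$ and $\tilde\chi_{k+a}(c,d)=c_a+d_{a+m}+\sum_{i,j=0}^{k-1}n_{ij}\,c_{a+j}d_{a+i}$ for $0\le a\le k-1$. A coset-counting calculation shows that under the identification $x_i=c_{l(i)},\ y_i=d_{l(i)}$, $\tilde\chi$ is exactly the restriction of $\psi$ to the $G_0$-invariant subspace, so properness of $\tilde\chi$ follows from Lemma~\ref{l5.1}. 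By Lemma~\ref{l5.2} the zeros of $\psi$ biject with pairs $(K,L)\subseteq\mathbb{Z}_p^*$ satisfying $|K|+|L|=p-1$, and, by uniqueness in Lemma~\ref{l5.2}(ii), the $G_0$-invariant zeros correspond exactly to $G_0$-invariant pairs, i.e.\ unions of cosets $G_0,\ldots,G_{k-1}$. If $K$ is a union of $j$ cosets, then $|K|=j(p-1)/k$ forces $L$ to be a union of $k-j$ cosets, and Vandermonde's identity gives $\sum_{j=0}^k\binom{k}{j}\binom{k}{k-j}=\binom{2k}{k}$ such pairs. For multiplicity one, the $G_0$-equivariance of $\psi$ makes $\psi'$ at any $G_0$-fixed zero a $G_0$-equivariant endomorphism; since $G_0$ is abelian, $\mathbb{C}^{2p-2}$ splits isotypically into one-dimensional character blocks of $G_0$ of common multiplicity $2k$ in source and target, and the full Jacobian determinant factors as the product of the block determinants. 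Non-vanishing of $\det\psi'$ (Theorem~\ref{t5.3}) thus forces non-vanishing of the trivial-character block, which is precisely $\tilde\chi'(c,d)$. Hence $\tilde\chi$ is proper of multiplicity $\binom{2k}{k}$ by Theorem~\ref{t4.8}.

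Finally I would transfer the count from $\tilde\chi$ to $\chi$ by mirroring Lemma~\ref{l6.1} and Theorem~\ref{t6.2}. For $a\in(\mathbb{C}^*)^k$ put
\[
\chi_a(c)_b=c_b+\frac{a_{b+m}}{c_{b+m}}+\sum_{i,j=0}^{k-1}n_{ij}\,\frac{a_{b+i}c_{b+j}}{c_{b+i}}\qquad (0\le b\le k-1),
\]
so that $\chi=\chi_{(1,\ldots,1)}$. Eliminating $d_b=a_b/c_b$ from $\tilde\chi(c,d)=(a,c')$ identifies the preimages of $\tilde\chi$ over $\{a\}\times\mathbb{C}^k$ with those of $\chi_a$, which gives properness of $\chi_a$. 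A Rouch\'e argument verbatim from Lemma~\ref{l6.1} shows $a\mapsto m(\chi_a)$ is constant on the connected open set $(\mathbb{C}^*)^k$, while a Sard/generic-value argument as in Theorem~\ref{t6.2} forces $m(\chi_a)=m(\tilde\chi)=\binom{2k}{k}$ for Lebesgue almost all $a$. Theorem~\ref{t4.8} then delivers the claim. The main obstacle is the bookkeeping identifying the restriction of $\psi$ with the explicit form (\ref{7.2}) — a coset-counting computation in which the constants $m$ and $n_{ij}$ appear naturally — together with the isotypic block-decomposition of the Jacobian; once the $G_0$-equivariance of $\psi$ is established, the rest is a careful transcription of Sections~5 and~6.
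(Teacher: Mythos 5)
Your proposal is correct and follows essentially the same route as the paper: restrict $\psi$ (equivalently $\varphi$) to the $G_0$-invariant subspace $E\times E$, use Lemma \ref{l5.2} to identify the invariant zeros with pairs of coset-unions and count $\binom{2k}{k}$ of them with multiplicity one via Theorem \ref{t5.3}, and then transfer to the $x$-level map $\chi=\sigma_E$ by repeating the Rouch\'e and Sard arguments of Lemma \ref{l6.1} and Theorem \ref{t6.2}. The only cosmetic difference is your isotypic block-decomposition of the Jacobian; the paper gets the same conclusion more directly from the fact that the restriction of the injective map $\varphi'(z)$ to the invariant subspace $E\times E$ is again injective.
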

The proof of Theorem 7.1 relies on Proposition 7.3 and Proposition 7.4 below. We first introduce some notation: Let $n\in\mathbb{N}$ and let $F$ be a subspace of $\mathbb{C}^n$ of dimension $d\ge 1$. A subset $U\subseteq F$ is called a region in $F$ if it is non-empty, open and connected in the relative topology on $F$. By choosing a fixed basis for $F$, we can identify $F$ with $\mathbb{C}^d$, and thereby extend the defintion of holomorphic functions, proper holomorphic functions and their multiplicities to maps $\varphi: U\rightarrow V$, where $U$ and $V$ are two regions in $F$. Clearly these definitions are independent of the choice of a basis for $F$.

\begin{defi}\label{def7.2}
Let $E$ denote the set of $(x_i)_{i=1}^{p-1}\in\mathbb{C}^{p-1}$ for which the function $i\rightarrow x_i, i\in\mathbb{Z}_p^*=\{1,\ldots,p-1\}$ is constant on each of the cosets $G_0,\ldots,G_{k-1}$ of $G_0$.
\end{defi}

Note that $E$ is the $k$-dimensional subspace of $\mathbb{C}^{p-1}$, and the indicator functions $1_{G_0},\ldots,1_{G_{k-1}}$ given by
\begin{equation}\label{7.3}
(1_{G_l})_i = \left\{\begin{array}{ll}1 & i\in G_l\\ 0 & i \not\in G_l\end{array}\right.
\end{equation}
form a basis for $E$. Note also, that $E\times E$ is a subspace of $\mathbb{C}^{p-1}\times \mathbb{C}^{p-1}\simeq\mathbb{C}^{2p-2}$ of dimension $2k$.

\begin{prop}\label{prop7.3}
Let $\varphi,\psi:\mathbb{C}^{2p-2}\rightarrow\mathbb{C}^{2p-2}$ be the proper holomorphic functions defined in Lemma 5.1 and corollary 5.4. Then
\begin{itemize}
\item[(a)] $\varphi(E\times E) \subseteq E\times E$ and $\psi(E\times E)\subseteq E\times E$.
\item[(b)] The restrictions $\varphi_E$ and $\psi_E$ of $\varphi$ and $\psi$ to $E\times E$ are proper holomorphic functions.
\item[(c)] The multiplicities of $\varphi_E$ and $\psi_E$ are given by
\[
m(\varphi_E)= m(\psi_E) = \begin{pmatrix}2k\\ k\end{pmatrix}.
\]
\end{itemize}
\end{prop}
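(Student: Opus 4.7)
The plan is to reduce everything to Lemma \ref{l5.2}, Theorem \ref{t5.3} and Corollary \ref{col5.4} via the $G_0$-symmetry of $E$. For part (a), the key observation is that if $x=(1,x_1,\ldots,x_{p-1})$ with $(x_j)_{j\in\mathbb{Z}_p^*}\in E$, then $\hat{x}|_{\mathbb{Z}_p^*}\in E$ as well. Writing $x_i=c_l$ for $i\in G_l$ and fixing $s\in G_0$, the substitution $i\mapsto s^{-1}i$ in
\[
\sqrt{p}\,\hat{x}_{sj}=1+\sum_{l=0}^{k-1}c_l\sum_{i\in G_l}\e^{\I 2\pi(sj)i/p}
\]
permutes each coset $G_l$ (because $s^{-1}G_l=G_l$), so $\hat{x}_{sj}=\hat{x}_j$ for all $j\in\mathbb{Z}_p^*$. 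The same holds for $\hat{y}$, and since $-1$ permutes $G_0$-cosets, $j\mapsto\hat{y}_{-j}$ is $G_0$-constant on $\mathbb{Z}_p^*$ too. Hence $(x_jy_j)_j$ and $(\hat{x}_j\hat{y}_{-j})_j$ both lie in $E$, giving $\varphi(E\times E)\subseteq E\times E$. For $\psi$, an analogous substitution shows $j\mapsto\sum_m x_{j+m}y_m$ is $G_0$-constant; equivalently, the affine map $\Lambda$ from Corollary \ref{col5.4} restricts to a self-map of $E\times E$ (its Fourier kernels $\sum_{k\in G_l}\e^{\I 2\pi jk/p}$ being $G_0$-invariant in $j$), so $\psi=\Lambda^{-1}\circ\varphi$ preserves $E\times E$. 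Part (b) is then immediate: $E\times E$ is closed in $\mathbb{C}^{2p-2}$, so $\varphi_E^{-1}(K)=\varphi^{-1}(K)\cap(E\times E)$ is compact whenever $K$ is, and similarly for $\psi_E$.

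For part (c), by Theorem \ref{t4.8} it suffices to count the zeros of $\varphi_E$ with multiplicity. A zero of $\varphi_E$ is a zero of $\varphi$ lying in $E\times E$, and by Lemma \ref{l5.2}(i) the zeros of $\varphi$ are parametrised by pairs $(K,L)$ of subsets of $\mathbb{Z}_p^*$ with $|K|+|L|=p-1$. If $(x',y')\in E\times E$ is a zero, then $L=\supp(x)\cap\mathbb{Z}_p^*$ is a union of $G_0$-cosets (because $x\in E$), and by part (a) so is $K=\supp(\hat{x})\cap\mathbb{Z}_p^*$. Conversely, if both $K$ and $L$ are unions of $G_0$-cosets, define the permutation $T_s:\mathbb{C}^p\to\mathbb{C}^p$ by $(T_su)_i=u_{s^{-1}i}$ for $s\in G_0$. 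Then $T_s$ fixes coordinate $0$ and preserves the support conditions (\ref{5.5})--(\ref{5.6}) (since $sK=K$ and $sL=L$), so $(T_sx,T_sy)$ is again a solution normalised by $1$ in coordinate $0$; uniqueness in Lemma \ref{l5.2}(ii) forces $T_sx=x$ and $T_sy=y$ for every $s\in G_0$, i.e.\ $(x',y')\in E\times E$. Counting pairs in which $K$ uses $j$ cosets and $L$ uses $k-j$ cosets gives
\[
\sum_{j=0}^{k}\binom{k}{j}\binom{k}{k-j}=\binom{2k}{k}
\]
distinct zeros of $\varphi_E$.

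Each such zero has multiplicity $1$ in $\varphi_E$: Theorem \ref{t5.3} shows $\varphi'(z)$ is a linear isomorphism of $\mathbb{C}^{2p-2}$, and differentiating the inclusion $\varphi(E\times E)\subseteq E\times E$ shows it maps $E\times E$ into itself, so its restriction is an injective and hence bijective endomorphism of the finite-dimensional space $E\times E$. Therefore $m(\varphi_E)=\binom{2k}{k}$. Since $\Lambda$ restricts to an affine bijection of $E\times E$, the identity $\psi_E=(\Lambda^{-1})|_{E\times E}\circ\varphi_E$ together with multiplicativity of multiplicity under composition of proper holomorphic maps yields $m(\psi_E)=m(\varphi_E)=\binom{2k}{k}$. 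The main obstacle is the converse direction in the characterisation of zeros of $\varphi_E$, namely showing that every pair $(K,L)$ of $G_0$-coset unions with $|K|+|L|=p-1$ actually arises from a solution inside $E\times E$; the $G_0$-averaging via the operators $T_s$ combined with the uniqueness statement of Lemma \ref{l5.2}(ii) is what makes the count collapse from $\binom{2p-2}{p-1}$ down to $\binom{2k}{k}$.
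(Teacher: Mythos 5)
Your proof is correct and follows essentially the same route as the paper: $G_0$-invariance of the Fourier transform on $E$ for part (a), the preimage intersection for properness in (b), and in (c) the bijection between zeros in $E\times E$ and pairs of coset-unions $(I,I')$ with $|I|+|I'|=k$, obtained by combining the uniqueness in Lemma \ref{l5.2}(ii) with the $G_0$-translation operators, plus the restriction of the injective Jacobian from Theorem \ref{t5.3}. Your $T_s$-operator formulation and the observation that $\Lambda$ restricts to an affine bijection of $E\times E$ are only notational variants of the paper's argument.
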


\begin{proof}
(a) Let $x'=(x_1,\ldots,x_{p-1})\in E$, $y'=(y_1,\ldots,y_{p-1})\in E$ and put
\[
x=(1,x_1,\ldots,x_{p-1})\qquad\textrm{and}\qquad y=(1,y_1,\ldots,y_{p-1}).
\]
To prove that $\varphi(E\times E)\subseteq E\times E$ and $\psi(E\times E)\subseteq E\times E$, it is by (\ref{5.1}), (\ref{5.2}), (\ref{e5.22}) and (\ref{e5.23}) sufficient to show that
\begin{eqnarray}
(x_j y_j)_{1\le j\le p-1}\in E\label{7.4}\\
(\hat{x}_j\hat{y}_{-j})_{1\le j\le p-1}\in E\label{7.5}\\
\Big(\sum_{m=0}^{p-1} x_{j+m}y_m\Big)_{1\le j\le p-1}\in E\label{7.6}
\end{eqnarray}
Note that (\ref{7.4}) follows immediately from the conditions $x'\in E$ and $y'\in E$. To prove (\ref{7.5}), note first that $G_0$ acts transitively on each of its cosets, i.e.
\[
G_l=\{ hj | h\in G_0\}\quad\textrm{for all}\quad j\in G_l.
\]
Hence
\begin{equation}\label{7.7}
E=\{ (x_j)_{j=1}^{p-1} | x_{hj} = x_j\quad\textrm{for all}\quad h\in G_0 \}
\end{equation}
where as usual indices are calculated modulo $p$. Let $h\in G_0$ and $0\le j\le p-1$. Then
\[
\hat{x}_{hj}=\frac{1}{\sqrt{p}}\bigg(\sum_{m=0}^{p-1}\e^{\I 2\pi jhm/p}x_m\bigg)
\]
Since $m\rightarrow hm$ is a bijetion of $\mathbb{Z}_p$ onto itself, we can replace $m$ by $h^{-1}m$ in the above summation ($h^{-1}$ is the inverse of $h$ in the group $G_0\subseteq \mathbb{Z}_p^*$). Hence
\begin{equation}\label{7.8}
\hat{x}_{hj}=\frac{1}{\sqrt{p}}\bigg(\sum_{m=0}^{p-1}\e^{\I 2\pi jm/p}x_{h^{-1}m}\bigg).
\end{equation}
Since $(x_1,\ldots,x_{p-1})\in E$ and $h^{-1}0=0$ we have $x_{h^{-1}m}=x_m$ for $0\le m\le p-1$ and therefore $\hat{x}_{hj}=\hat{x}_j$, $j\in\mathbb{Z}_p$. In the same way we get $\hat{y}_{-hj}=\hat{y}_{-j}$, $j\in\mathbb{Z}_p$. Hence (\ref{7.5}) follows from (\ref{7.7}).

To prove (\ref{7.6}), put $w=(w_1,\ldots,w_{p-1})$, where
\[
w_j=\sum_{m=0}^{p-1} x_{j+m}y_m,\qquad 1\le j\le p-1.
\]
Let $h\in G_0$. Then
\[
w_{hj} = \sum_{m=0}^{p-1} x_{h_j+m}y_m,\qquad 1\le j\le p-1.
\]
By replacing $m$ by $hm$ in the above summation, we get
\[
w_{hj} = \sum_{m=0}^{p-1} x_{h(j+m)}y_{hm}.
\]
Since $x',y',\in E$ and $h0=0$, it follows that
\[
w_{hj}=\sum_{m=0}^{p-1}x_{j+m}y_m=w_j.
\]
Hence by (\ref{7.7}), $w\in E$ which proves (\ref{7.6}).\\
\\
(b) It is clear that $\varphi_E$ and $\psi_E$ are holomorphic functions on $E\times E$. Let $K\subseteq E\times E$ be a compact set. Then
\[
(\varphi_E)^{-1}(K) = \varphi^{-1}(K)\cap (E\times E).
\]
Since $\varphi$ is proper, it follows that $\varphi_E$ is a proper holomorphic function of $E\times E$ into itself. The same argument shows that $\psi_E$ is proper.\\
\\
(c) Assume that $z=(x_1,\ldots,x_{p-1},y_1,\ldots,y_{p-1})\in E\times E$ is a solution to $\varphi(z)=0$, and put
\[
x=(1,x_1,\ldots,x_{p-1})\qquad\textrm{and}\qquad y=(1,y_1,\ldots,y_{p-1}).
\]
By Lemma 5.2 there exists a unique pair of subsets $K,L\subseteq\mathbb{Z}_p^*$ satisfying $|K|+|L|=p-1$ such that
\begin{eqnarray}
\textrm{supp}(x)=L\cup\{0\},\qquad \textrm{supp}(\hat{x})=K\cup\{0\} \label{7.9}\\
\textrm{supp}(y)=\mathbb{Z}_p\setminus L,\qquad -\textrm{supp}(\hat{y})=\mathbb{Z}_p\setminus K \label{7.10}
\end{eqnarray}
Let $h\in G_0$. Since $z\in E\times E$ we get from the proof of (a), that $x_{hj}=x_j$ and $\hat{x}_{hj}=\hat{x}_j$ for $1\le j\le p-1$. Hence the sets $K, L\in\mathbb{Z}_p^*$ are invariant under multiplication by all $h\in G_0$, which implies that $K$ and $L$ are disjoint unions of $G_0$-cosets, i.e
\begin{equation}\label{7.11}
K=\bigcup_{l\in I}G_l,\qquad L=\bigcup_{l\in I'}G_l
\end{equation}
where $I$ and $I'$ are finite subsets of $\{0,\ldots,k-1\}$. Moreover $|I|+|I'|=k$, because $|K|+|L|=p-1$ and each coset $G_l$ has $\frac{p-1}{k}$ elements.

Conversely, if $K,L$ are of the form (\ref{7.11}) for $I, I'\subseteq \{0,\ldots,k-1\}$ and $|I|+|I'|=K$, then by Lemma 5.2 (ii), there is precisely one element $(x,y)\in\mathbb{C}^p\times\mathbb{C}^p$ with $x_0=y_0=1$ for which (\ref{7.9}) and (\ref{7.10}) holds and for this pair $(x,y)$,
\[
z=(x_1,\ldots,x_{p-1},y_1,\ldots,y_{p-1})
\]
is a solution to $\varphi(z)=0$. We claim that $z\in E\times E$. To prove this, let $h\in G_0$ and define $(\tilde{x},\tilde{y})\in\mathbb{C}^p\times\mathbb{C}^p$ by
\[
\tilde{x}_j=x_{hj}\quad\textrm{and}\quad \tilde{y}_j=y_{hj},\qquad 0\le j\le p-1.
\]
Then $\tilde{x}_0=\tilde{y}_0=1$ and by the proof of (\ref{7.8})
\[
(\hat{\tilde{x}})_{j} = \hat{x}_{h^{-1}j}\quad\textrm{and}\quad (\hat{\tilde{y}})_{-j} = \hat{y}_{-h^{-1}j},\qquad 0\le j\le p-1.
\]
Since $h,h^{-1}\in G_0$ and since $K$ and $L$ are invariant under multiplication by elements from $G_0$, it follows that (\ref{7.9}) and (\ref{7.10}) are satisfied for the pair $(\tilde{x},\tilde{y})$ as well. Thus by the uniqueness of $(x,y)$ in Lemma 5.2 (ii), we have $\tilde{x}=x$ and $\tilde{y}=y$. Hence by (\ref{7.7}), $z\in E\times E$ as claimed.

Altogether, we have established a one-to-one correspondence between the zeros of $\varphi_E$ and the pairs of subsets $(I,I')$ of $\{0,\ldots,k-1\}$ for which $|I|+|I'|=k$. Hence $\varphi_E$ has exactly
\[
\sum_{l=0}^k\binom{k}{l}\binom{k}{k-l}=\binom{2k}{j}
\]
zeros. Let $z$ be a zero for $\varphi_E$. Then $z\in E\times E$ and $\varphi(z)=0$. By the proof of Theorem 5.3, $\ker\varphi'(z)=\{0\}$ and since $\varphi'_E$ is the restriction of $\varphi'(z)$ to $E\times E$ also $\ker\varphi'_E(z)=\{0\}$. Therefore all the zeros of $\varphi_E$ have multiplicity $1$. It now follows from Theorem 4.8, that $m(\varphi_E)=\binom{2k}{k}$.

From the proof of corollary 5.4 we know that $\psi=\Lambda^{-1}\circ\varphi$, where $\Lambda$ is the affine transformation of $\mathbb{C}^{2p-2}$ given by (\ref{e5.25}) and (\ref{e5.26}). It is elementary to check, that $\Lambda_E=\Lambda_{| E\times E}$ is an affine transformation of $E\times E$ onto itself. Hence $\psi_E=\Lambda^{-1}_E\circ\varphi_E$, and therefore $m(\psi_E)=m(\varphi_E)=\binom{2k}{k}$.
\end{proof}

\begin{prop}\label{prop7.4}
Let $\sigma : (\mathbb{C}^*)^{p-1}\rightarrow\mathbb{C}^{p-1}$ be the proper holomorphic map defined in Theorem 6.2, i.e.
\[
\sigma(x_1,\ldots,x_{p-1})_j = \sum_{m=0}^{p-1}\frac{x_{m+j}}{x_m},\qquad 1\le j\le p-1
\]
where $x_0=1$. Then the restriction $\sigma_E$ of $\sigma$ to $E_0=E\cap (C^*)^{p-1}$ is a proper holomorphic function of $E_0$ into $E$ with multiplicity $\binom{2k}{k}$.
\end{prop}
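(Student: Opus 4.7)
My plan is to follow the pattern of the proof of Theorem 6.2, but with $\psi$ replaced throughout by its restriction $\psi_E$ supplied by Proposition 7.3. Three things must be checked: that $\sigma$ maps $E_0$ into $E$; that $\sigma_E$ is proper; and that its multiplicity equals $\binom{2k}{k}$. The first two are direct adaptations of earlier arguments. For the inclusion $\sigma(E_0)\subseteq E$ I would reindex the defining sum $\sigma(x)_j=\sum_{m=0}^{p-1}x_{j+m}/x_m$ by $m\mapsto hm$ for $h\in G_0$ and use the characterization $E=\{(x_i)_{i=1}^{p-1}:x_{hi}=x_i\text{ for all }h\in G_0\}$ (extended by $x_0=1$, since $h\cdot 0=0$) to conclude $\sigma(x)_{hj}=\sigma(x)_j$, exactly as in the proof of Proposition 7.3(a). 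Properness of $\sigma_E$ then follows because $E$ is a closed linear subspace of $\mathbb{C}^{p-1}$: $E_0=E\cap(\mathbb{C}^*)^{p-1}$ is relatively closed in $(\mathbb{C}^*)^{p-1}$, so for compact $K\subseteq E$ the preimage $\sigma_E^{-1}(K)=\sigma^{-1}(K)\cap E_0$ is a closed subset of the compact set $\sigma^{-1}(K)$, hence compact.

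For the multiplicity I would imitate Lemma 6.1 and Theorem 6.2 verbatim on the subspace $E\times E$. For $a\in E_0$ define $\sigma_{a,E}:E_0\to E$ by $\sigma_{a,E}(x)_j=\sum_m a_m x_{j+m}/x_m$; the same reindexing shows the image lies in $E$. The Rouché comparison of Lemma 6.1, carried out inside $E_0\cong(\mathbb{C}^*)^k$ on a polyhedron $\{x\in E_0:c/R<|c_l|<R\}$ (where $c_0,\dots,c_{k-1}$ are the coordinates of $x$ in the basis $1_{G_0},\dots,1_{G_{k-1}}$), then shows that $a\mapsto m(\sigma_{a,E})$ is constant on $E_0$. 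To evaluate this constant I exploit the fact that $E$ is closed under coordinatewise multiplication and inversion: for fixed $a\in E_0$ the map $x\mapsto(x,a/x)$ is a bijection from $E_0$ onto $\{(x,y)\in E_0\times E_0:xy=a\}$, and under this bijection the equation $\psi_E(x,y)=(a,c)$ becomes $\sigma_{a,E}(x)=c$. Combining this correspondence with $m(\psi_E)=\binom{2k}{k}$ from Proposition 7.3, Theorem 4.1 and Remark 4.2 applied to $\psi_E$, and the Fubini argument of Theorem 6.2 carried out on $E\times E\simeq\mathbb{C}^k\times\mathbb{C}^k$, I obtain $m(\sigma_{a,E})=\binom{2k}{k}$ for almost every $a\in E_0$, and hence for every $a\in E_0$ by the constancy just established. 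Specializing to $a=(1,\dots,1)\in E_0$ yields $m(\sigma_E)=\binom{2k}{k}$.

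The main obstacle will be the measure-theoretic step: I need the critical set of $\psi_E$ to be null with respect to the $4k$-dimensional Lebesgue measure on $E\times E$, not merely null with respect to the ambient $(4p-4)$-dimensional measure on $\mathbb{C}^{2p-2}$. This is exactly Remark 4.2 applied to $\psi_E$ itself, which is legitimate because Proposition 7.3 has already established that $\psi_E$ is a proper holomorphic function between the $2k$-dimensional regions $E\times E$, so Theorem 4.1 and Remark 4.2 apply to $\psi_E$ directly, as the discussion preceding Definition 7.2 explicitly anticipates.
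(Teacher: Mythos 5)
Your proposal is correct and follows essentially the same route as the paper, which simply states that the result follows ``by simple modifications of the proofs of Lemma 6.1 and Theorem 6.2'' applied on $E\times E$, using $\sigma_a(E_0)\subseteq E$ from the proof of (7.6), properness as in Proposition 7.3(b), and $m(\sigma_{a,E})=m(\psi_E)=\binom{2k}{k}$. You have merely spelled out those modifications in more detail --- in particular the correct observation that Theorem 4.1 and Remark 4.2 must be applied to $\psi_E$ as a proper map between the $2k$-dimensional regions $E\times E$ --- which is exactly what the paper intends.
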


\begin{proof}
Note first that $E_0=E\cap(C^*)^{p-1}$ is an open, connected and dense subset of $E$. Put $a_0=1$ and define for $a\in E_0$
\[
\sigma_a(x_1,\ldots,x_{p-1})_j = \sum_{m=0}^{p-1} a_m\frac{x_{m+j}}{x_m},\quad 1\le j\le p-1
\]
as in lemma 6.1. It is clear from the proof of (\ref{7.6}) that $\sigma_a(E_0)\subseteq E$ for all $a\in E_0$. Let $\sigma_{a,E}$ denote the restriction of $\sigma_a$ to $E_0$. By lemma 6.1, $\sigma_a$ is a proper holomorphic map from $(\mathbb{C}^*)^{p-1}$ to $\mathbb{C}^{p-1}$. As in the proof of Proposition 7.4(b), it follows that $\sigma_a$ is a proper holomorphic map from $E_0$ to $E$. By simple modifications of the proofs of lemma 6.1 and Theorem 6.2 one gets first that the multiplicity of $\sigma_{a,E}$ is independent of $a\in E_0$ and next that $m(\sigma_{a,E})=m(\psi_E)$ for all $a\in E_0$. In particular
\[
m(\sigma_E)=m(\psi_E)=\binom{2k}{k}.
\]

\noindent\textbf{Proof of Theorem 7.1}. The function $\chi: (\mathbb{C}^*)^{k-1}\rightarrow\mathbb{C}^{k-1}$ defined in Theorem 7.1 is just the function $\sigma_E: E_0\rightarrow E$ written out in coordinates $(c_0,\ldots,c_{k-1})$ with respect to the basis $(1_{G_0},\ldots,1_{G_{k-1}})$ for $E$ defined by (\ref{7.3}) (cf. the derivation of the equations (\ref{7.2}) in \cite{bj}). Therefore Theorem 7.1 is an immediate consequence of Proposition 7.4 and Theorem 4.8.
\end{proof}

\begin{remark}\rm
(a) If $k=p-1$ all cyclic $p$-roots are of simple index $k$, and this special case of Theorem 7.1 is the same as Theorem 6.2.

(b) It follows from Theorem 7.1 that there are at most $\binom{2k}{k}$ distinct cyclic $p$-roots of simple index $k$ on $x$-level (or $z$-level). Moreover the number of cyclic $p$-roots of simple index $k$ on $x$-level (or $z$-level) counted with multiplicity is at least $\binom{2k}{k}$. However, for $k<p-1$, we have not been able to rule out the possibility that a cyclic $p$-root of simple index $k$ could have higher multiplicity with respect to the set of equations (\ref{2.3}) than with respect to the set of equations (\ref{7.2}).
\end{remark}

\bigskip

\bigskip
Uffe Haagerup\\
Department of Mathematics and Computer Science\\
University of Southern Denmark\\
Campusvej 55, DK-5230 Odense M\\
Denmark\\
\texttt{haagerup@imada.sdu.dk}


\begin{thebibliography}{12pt}
\bibitem[AY]{ay} 
I.A. Aizenberg and A.P. Yuzhakov, Integral representations and
Residues in Multidimensional Complex Analysis. Translations of
Mathematical Monographs, Vol. 58, Amer. Math. Soc (1983).

\bibitem[BaF]{baf}
J. Backelin and R. Fr\"oberg, How we proved that there are exactly 924 7-roots, Proceedings of the 1991 International Symposium on Symbolic and Algebraic Computation (ISSAC'91), ACM-press (1991).

\bibitem[Bj]{bj} G. Bj\"orck, Functions of modulus 1 on
$\mathbb{Z}_n$, whose Fourier transforms have constant
modulus, and ``cyclic $n$-roots''. Recent Advances in Fourier
Analysis and its applications, NATO, Adv. Sci. Inst. Ser. C,
Math. Phys. Sci., Vol 315, pp. 131-140, Kl\" uwer Acad. Publ. (1990).

\bibitem[BF1]{bf1}
G. Bj\"orck and R. Fr\"oberg, A faster way to
count the solutions of inhomogenious systems of algebraic
equations with application to cyclic $n$-roots, Journ.
Symbolic Computation 12, pp. 329-336 (1991).

\bibitem[BF2]{bf2}
G. Bj\"orck and R. Fr\"oberg, Methods to
``divide out" certain solutions from
systems of algebraic equations, applied
to find all cyclic 8 roots. Analysis,
algebra, and computers in mathematical
research (Lulea 1992), Lecture Notes in
Pure and Appl. Math. 156, pp. 57--70,
Dekker, New York 1994.

\bibitem[BH]{bh}
G. Bj\"orck and U. Haagerup, All cyclic $p$-roots of index 3 found by symmetry-preserving calculations. Preprint (2008). \texttt{http://front.math.ucdavis.edu/0803.2506}

\bibitem[BS]{bs}
G. Bj\"orck and B. Saffari, New classes of finite unimodular 
sequences with unimodular Fourier transforms. Circulant
Hadamard matrices with complex entries. C.R. Acad. Sci.
Paris 320, pp. 319-324 (1995).

\bibitem[Fa]{fa}
J.-C. Faug\`ere, Finding all the solutions of cyclic $9$ using
Gr\" obner basis techniques, Computer Mathematics
(Matsuyama, 2001), 1-12, Lecture Notes Ser. Comput. 9,
World Sci. Publ. (2001).

\bibitem[Ha]{ha} U. Haagerup, Orthogonal maximal abelian
$^*$-subalgebras of the $n\times n$ matrices and cyclic
$n$-roots, Operator Algebras and Quantum Field Theory
(S. Doplicher, R. Longo, J.E. Roberts, L. Zsido, eds.)
pp. 296-322, International Press, Cambridge (1997).

\bibitem[Ru1]{ru1}
W. Rudin, Function Theory in the unit ball of $\mathbb{C}^n$.
Grundlehren der mathematischen Wissenschaften, Vol. 241,
Springer-Verlag (1980).

\bibitem[Ru2]{ru2}
W. Rudin, Real and Complex Analysis, 3rd edition, 
McGraw-Hill (1987).

\bibitem[SL]{sl}
P. Stevenhagen and H.W. Lenstra, Jr., Chebotar\"ev and his density
theorem, The Mathematical Intelligencer, 18, no. 2, 26-37 (1996).

\bibitem[Ta]{ta}
T. Tao, An uncertainty principle for cyclic groups of prime
order. Math. Research Letters 12, 121-127 (2005).

\bibitem[Ts]{ts}
A.K. Tsikh, Multidimensional Residues and Their Applications,
Translations of mathematical Monographs Vol. 103, Amer. Math.
Soc. (1991).
\end{thebibliography}
\end{document}